\renewcommand{\geq}{\geqslant}
\renewcommand{\leq}{\leqslant}
\theoremstyle{plain}
\newtheorem{theorem}[subsection]{Theorem}
\newtheorem{lemma}[subsection]{Lemma}
\newtheorem{conjecture}[subsection]{Conjecture}
\newtheorem{corollary}[subsection]{Corollary}
\newtheorem{proposition}[subsection]{Proposition}
\theoremstyle{remark}
\newtheorem{remark}[subsection]{Remark}
\newtheorem{definition}[subsection]{Definition}
\newcommand{\CC}{\ensuremath{\mathbb{C}}}
\newcommand{\PP}{\ensuremath{\mathbb{P}}}
\newcommand{\QQ}{\ensuremath{\mathbb{Q}}}
\newcommand{\ZZ}{\ensuremath{\mathbb{Z}}}
\DeclareMathOperator{\pr}{pr}
\DeclareMathOperator{\Aut}{Aut}
\DeclareMathOperator{\End}{End}
\DeclareMathOperator{\h}{\mathfrak{h}} 
\DeclareMathOperator{\NS}{\mathrm{NS}} 
\DeclareMathOperator{\Km}{\mathrm{Km}} 
\DeclareMathOperator{\Pic}{\mathrm{Pic}}
\title{Sixfolds of generalized Kummer type and K3 surfaces}
\author{Salvatore Floccari}
\address{Institute of Algebraic Geometry, Leibniz University Hannover, Germany}
\email{floccari@math.uni-hannover.de}
\begin{document}
	
	\keywords{Hyper-K\"ahler varieties, K3 surfaces, Hodge conjecture}
	\subjclass{14C25, 14C30, 14J28, 14J42}
	
		\begin{abstract}
			We prove that any hyper-K\"{a}hler sixfold $K$ of generalized Kummer type has a naturally associated manifold $Y_K$ of $\mathrm{K}3^{[3]}$-type. It is obtained as crepant resolution of the quotient of $K$ by a group of symplectic involutions acting trivially on its second cohomology. When $K$ is projective, the variety $Y_K$ is birational to a moduli space of stable sheaves on a uniquely determined projective~$\mathrm{K}3$ surface~$S_K$. As application of this construction we show that the Kuga-Satake correspondence is algebraic for the K3 surfaces $S_K$, producing infinitely many new families of $\mathrm{K}3$ surfaces of general Picard rank $16$ satisfying the Kuga-Satake Hodge conjecture.
		\end{abstract}
	
	\maketitle
	
	\section{Introduction}
	
Together with manifolds of $\mathrm{K}3^{[n]}$-type, deformations of generalized Kummer varieties constitute the most well studied hyper-K\"{a}hler manifolds. We refer to this deformation type in dimension $2n$ as to the $\mathrm{Kum}^n$-type. After Beauville~\cite{beauville1983varietes} gave the first examples of such hyper-K\"{a}hler manifolds, many more have been constructed from moduli spaces of stable sheaves on abelian surfaces, see \cite{Yos01}. 
However, the varieties so obtained always have Picard rank at least $2$, and our understanding of a general projective variety of $\mathrm{Kum}^n$-type remains poor. In fact, for the time being, no construction of such variety is known (but see \cite{O'G23} for some recent ideas).

In the present article we partially remedy this for hyper-K\"{a}hler sixfolds of generalized Kummer type, by associating to any $K$ of $\mathrm{Kum}^3$-type a hyper-K\"{a}hler manifold~$Y_K$ of~$\mathrm{K}3^{[3]}$-type. They are related by a dominant rational map 
\[
K\dashrightarrow Y_K
\]
of degree $2^5$, described as follows.

Any $K$ of $\mathrm{Kum}^3$-type admits an action of the group $(\ZZ/4\ZZ)^4\rtimes \ZZ/2\ZZ$ by symplectic automorphisms, where $\ZZ/2\ZZ$ acts on the first factor as $-1$. In fact, this is the group $\Aut_0(K)$ of automorphisms of $K$ which act trivially on its second cohomology, which is deformation invariant by \cite{hassettTschinkel}. It has been computed in \cite{boissiere2011higher} for the generalized Kummer variety associated to an abelian surface. We let $G\subset \Aut_0(K)$ be the subgroup generated by the automorphisms whose fixed locus contains a $4$-dimensional component. We will show in Lemma \ref{lem:G} that \[G\cong (\ZZ/2\ZZ)^5.\]

\begin{theorem}\label{thm:MainResult}
	Let $K$ be a manifold of $\mathrm{Kum}^3$-type. The quotient $K/G$ admits a resolution $Y_K\to K/G$	with $Y_K$ a manifold of $\mathrm{K}3^{[3]}$-type. 
\end{theorem}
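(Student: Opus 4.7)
The plan is to reduce the theorem to a single concrete example via deformation invariance, analyze the singularities of the quotient there, construct the crepant resolution, and identify the deformation type of the resolution via a birational comparison with a moduli space on a K3 surface.

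Because $\Aut_0(K)$ is deformation invariant by Hassett-Tschinkel and $G$ is defined intrinsically in terms of the dimensions of fixed loci of its elements, the subgroup $G\subset \Aut_0(K)$ varies in families, and a simultaneous symplectic resolution of $K/G$ along its (smooth, codimension-$2$) $A_1$-singular locus also exists in families. It is therefore enough to establish the statement for $K=K_3(A)$ with $A$ a single abelian surface. For this model, the translations by $A[2]\subset A[4]$ together with the involution $\iota$ induced by $(-1)_A$ generate a subgroup of $\Aut_0(K_3(A))$ abstractly isomorphic to $(\ZZ/2\ZZ)^5$, which by Lemma~\ref{lem:G} will coincide with $G$.

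The next step is a direct analysis of the fixed loci on $K_3(A)$: for $g=t_a\circ \iota^{\varepsilon}$ with $a\in A[2]$, the fixed subscheme in $A^{[4]}$ parametrises $g$-stable length-four subschemes of $A$, and its intersection with the Albanese fibre $K_3(A)$ is, in each case, a smooth fourfold, as can be verified by a case analysis on the supports. This shows that the singular locus of $K_3(A)/G$ is a disjoint union of smooth symplectic fourfolds along which the singularities are transverse $A_1$. The blow-up along this singular locus is then a crepant resolution $Y_{K_3(A)}\to K_3(A)/G$ which is smooth and carries a holomorphic symplectic form inherited from $K_3(A)$, giving a compact hyper-K\"ahler sixfold.

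The genuine obstacle is to verify that $Y_{K_3(A)}$ has $\mathrm{K}3^{[3]}$ deformation type. My plan is to construct, from the $G$-invariant Hodge structure on $H^2(K_3(A))$, a polarised K3 surface $S_A$ together with an isotropic Mukai vector $v$, such that a suitable moduli space $M_v(S_A)$ of stable sheaves is birational to $Y_{K_3(A)}$. Since smooth projective moduli spaces of stable sheaves on a K3 with isotropic Mukai vector are of $\mathrm{K}3^{[n]}$-type, and since birational hyper-K\"ahler manifolds are deformation equivalent, this would conclude the proof for $K_3(A)$, and hence by the initial reduction for every $K$ of $\mathrm{Kum}^3$-type. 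A purely topological alternative would be to compute $b_2(Y_{K_3(A)})=23$ and the Beauville-Bogomolov form directly via the $G$-equivariant cohomology of $K_3(A)$, but that route pins down only the lattice type and not the full deformation type, so the moduli-space argument seems preferable.
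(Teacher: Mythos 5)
Your overall architecture (deformation to a single model, local analysis of the quotient singularities, blow-up, identification of the deformation type via a birational moduli-space comparison) matches the paper's, but two steps as you describe them do not go through.

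First, the singular locus of $K_3(A)/G$ is \emph{not} a disjoint union of smooth fourfolds with transverse $A_1$ singularities. The union $\bigcup_{1\neq g\in G}K^g$ consists of $16$ fourfolds of $\mathrm{K}3^{[2]}$-type, but any two of them meet in a K3 surface and any three meet in $4$ points (and the elements $(\tau,1)$ fix only K3 surfaces, while $(\tau,-1)$ also has $140$ isolated fixed points, which one must check lie on other fourfold components). Consequently the local model of $K/G$ at a point lying on exactly $j$ components is $(\CC^2/\iota)^{j}\times(\CC^2)^{3-j}$ with $j$ up to $3$, not a single transverse $A_1$. One still gets a resolution by a single blow-up of the reduced singular locus, but this requires the observation that blowing up the singular locus of a product of $A_1$-singularities is the product of the minimal resolutions; your case analysis omits the deeper strata entirely, and the symplecticity of the resolution (via Fujiki's criterion) has to be checked on these local models.

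Second, and more seriously, your plan for identifying the deformation type is circular. Producing a K3 surface $S_A$ from the invariant Hodge structure and invoking a criterion that $Y_{K_3(A)}$ is birational to a moduli space $M_v(S_A)$ (Mongardi--Wandel, Addington) presupposes that $Y_{K_3(A)}$ is already known to be of $\mathrm{K}3^{[n]}$-type: those criteria are statements about manifolds of that deformation type, so they cannot be used to establish it. (Also, for a sixfold you need $v^2=4$, not an isotropic $v$.) The paper instead exhibits one explicit example where the birational model is constructed geometrically: for the Beauville--Mukai system $K_J(v_3)$ on a general principally polarized abelian surface, the norm map of the \'etale double cover $\tilde{\mathcal{C}}\to\mathcal{C}$ of linear systems descends to a birational map $K_J(v_3)/G\dashrightarrow M_{\Km(J)}(w_3)$, and the latter is birational to $\Km(J)^{[3]}$; this is the genuine content that your proposal leaves as a plan. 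Without some such explicit construction the proof is incomplete.
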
 

The resolution $Y_K\to K/G$ is obtained via a single blow-up of the reduced singular locus. It can also be described as the quotient by $G$ of the blow-up of $K$ at the union of the fixed loci of all non-trivial automorphisms in $G$. We show that there is an isometry of transcendental Hodge structures
	$$ H_{\mathrm{tr}}^2(Y_K, \QQ) \xrightarrow{\ \sim \ } H_{\mathrm{tr}}^2(K, \QQ)(2),$$
	where on the right hand side the form is multiplied by $2$. This parallels the classical construction of the Kummer $\mathrm{K}3$ surface $\Km(A)$ associated to an abelian surface $A$, where $H^2_{\mathrm{tr}}(\Km(A), \ZZ)$ is Hodge isometric to $H^2_{\mathrm{tr}}(A,\ZZ)(2)$.
 The study of the integral transcendental lattices of the manifolds $Y_K$ will be the subject of future work.

 As a consequence, we obtain a well-defined $\mathrm{K}3$ surface associated to any projective sixfold of generalized Kummer type.

\begin{theorem}\label{thm:AssociatedK3}
	 Let $K$ be a projective variety of $\mathrm{Kum}^3$-type. There exists a unique (up to isomorphism) projective $\mathrm{K}3$ surface $S_K$ such that the variety $Y_K$ of $\mathrm{K}3^{[3]}$-type given by Theorem \ref{thm:MainResult} is birational to a moduli space $M_{S_K, H}(v)$ of stable sheaves on~$S_K$, for some primitive Mukai vector $v$ and a $v$-generic polarization $H$.
\end{theorem}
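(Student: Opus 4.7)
The plan is to apply a theorem of Addington characterizing those projective manifolds $Y$ of $\mathrm{K}3^{[n]}$-type that are birational to a moduli space of stable sheaves on a $\mathrm{K}3$ surface: this holds precisely when the extended Mukai lattice $\widetilde{H}(Y, \ZZ)$ contains a primitive hyperbolic plane $U$ of Hodge type $(1,1)$; equivalently, when $H^2_{\mathrm{tr}}(Y, \ZZ)$ is Hodge isometric to the transcendental lattice of a $\mathrm{K}3$ surface.

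\medskip

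\textbf{Existence.} First I would observe that, since $K$ is projective and $G$ is finite, $K/G$ and its crepant resolution $Y_K$ are projective. Next, Theorem~\ref{thm:MainResult} yields $H^2_{\mathrm{tr}}(Y_K, \QQ) \cong H^2_{\mathrm{tr}}(K, \QQ)(2)$, so $\mathrm{rk}\, H^2_{\mathrm{tr}}(Y_K) = \mathrm{rk}\, H^2_{\mathrm{tr}}(K) \leq b_2(K) - 1 = 6$ and hence $\rho(Y_K) \geq 17$. In the $\mathrm{K}3^{[3]}$-lattice $U^{\oplus 3} \oplus E_8(-1)^{\oplus 2} \oplus \langle -4 \rangle$ a primitive sublattice of signature $(2,t)$ with $t \leq 4$ has orthogonal complement of rank at least $17$; relying on Nikulin's theorems on primitive embeddings, together with the explicit description of the exceptional classes of the resolution $Y_K \to K/G$ in $\NS(Y_K)$, I would exhibit a primitive copy of $U$ inside $\NS(Y_K)$. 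Addington's criterion would then produce the $\mathrm{K}3$ surface $S_K$ with $H^2_{\mathrm{tr}}(S_K, \ZZ) \cong H^2_{\mathrm{tr}}(Y_K, \ZZ)$, a primitive Mukai vector $v$, and a $v$-generic polarization $H$ on $S_K$, such that $Y_K$ is birational to $M_{S_K, H}(v)$.

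\medskip

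\textbf{Uniqueness.} Suppose $Y_K$ is also birational to $M_{S', H'}(v')$ for some projective $\mathrm{K}3$ surface $S'$. From the Hodge theory of moduli spaces of sheaves on $\mathrm{K}3$ surfaces one extracts a Hodge isometry $H^2_{\mathrm{tr}}(S', \ZZ) \cong H^2_{\mathrm{tr}}(Y_K, \ZZ) \cong H^2_{\mathrm{tr}}(S_K, \ZZ)$, so by the derived Torelli theorem of Mukai--Orlov the surfaces $S_K$ and $S'$ are Fourier--Mukai partners. The main obstacle is to promote this to an isomorphism $S' \cong S_K$. I would appeal to the counting formula of Hosono--Lian--Oguiso--Yau for Fourier--Mukai partners in terms of the genus of the transcendental lattice, and exploit the very constrained shape of $H^2_{\mathrm{tr}}(Y_K, \ZZ)$ — inherited, via the scaled isomorphism of Theorem~\ref{thm:MainResult}, from a primitive sublattice of the $\mathrm{Kum}^3$-lattice $U^{\oplus 3} \oplus \langle -8 \rangle$ — to conclude that the partner count equals~$1$ in our setting.
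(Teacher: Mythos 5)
Your proposal is correct and follows essentially the same route as the paper: bound $\operatorname{rk} H^2_{\mathrm{tr}}(Y_K)$ by $6$ via the isometry with $H^2_{\mathrm{tr}}(K,\QQ)(2)$, apply the Addington (equivalently Mongardi--Wandel) criterion for existence, and use the lattice-theoretic fact that a K3 surface with transcendental lattice of rank at most $10$ is determined up to isomorphism by that Hodge structure. The paper packages these steps slightly differently --- it produces $S_K$ directly from Morrison's existence result for K3 surfaces with prescribed transcendental lattice rather than splitting a hyperbolic plane off $\NS(Y_K)$ via Nikulin, and it cites the uniqueness statement from Huybrechts' book rather than rederiving it from the Hosono--Lian--Oguiso--Yau Fourier--Mukai partner count --- but the underlying arguments are the same.
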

	
	 We call $S_K$ the $\mathrm{K}3$ surface associated to the sixfold $K$. It is characterized by the existence of a Hodge isometry $H^2_{\mathrm{tr}}(S_K, \ZZ)\xrightarrow{\ \sim \ } H^2_{\mathrm{tr}}(Y_K,\ZZ)$. These surfaces come in countably many $4$-dimensional families of general Picard rank $16$; up to isogeny, they are the K3 surfaces $S$ admitting an isometric embedding $H^2_{\mathrm{tr}}(S,\QQ)\hookrightarrow \Lambda_{\mathrm{Kum}^3}(2)\otimes_{\ZZ} \QQ$ of rational quadratic spaces. Here, $\Lambda_{\mathrm{Kum}^3}$ is the lattice which is the second cohomology of manifolds of $\mathrm{Kum}^3$-type; it was computed in \cite{beauville1983varietes} that $\Lambda_{\mathrm{Kum}^3}=U^{\oplus 3}\oplus \langle -8 \rangle$, where we denote by $U$ the hyperbolic plane.

	\subsection*{Applications}
	In a series of papers \cite{MarkmanBeauvilleBogomolov}, \cite{markman2019monodromy}, \cite{markmanRational}, Markman has proven striking results on the Hodge conjecture for hyper-K\"{a}hler varieties of $\mathrm{Kum}^n$ and $\mathrm{K}3^{[n]}$-type. He uses Verbitsky's theory of hyperholomorphic sheaves \cite{verbitsky1997} to produce very interesting algebraic cycles on general projective hyper-K\"{a}hler varieties via deformation. Through our construction, we are able to deduce more cases of the Hodge conjecture from his results.
	
	Our main application is to the Kuga-Satake Hodge conjecture for $\mathrm{K}3$ surfaces~\cite{vanGeemen}. The Kuga-Satake construction \cite{deligne1971conjecture} associates via Hodge theory an abelian variety $\mathrm{KS}(S)$ to any projective K3 surface $S$, and the conjecture  
	predicts the existence of an algebraic cycle inducing an embedding of the transcendental Hodge structure of the surface into the second cohomology of $\mathrm{KS}(S)\times \mathrm{KS}(S)$. 
	 It is known to hold in many cases for $\mathrm{K}3$ surfaces of Picard number at least $17$ (\cite{morrison1985}), but it is wide open otherwise. There are a couple of families of $\mathrm{K}3$ surfaces of general Picard rank~$16$ for which the conjecture is known, namely, the family of double covers of the plane branched at $6$ lines \cite{paranjape} and the family of K3 surfaces with $15$ nodes in $\PP^4$ \cite{ILP}.
	 
	Theorem \ref{thm:application1} below gives infinitely many new families of K3 surfaces of general Picard rank $16$ for which the Kuga-Satake Hodge conjecture holds true.
	
	\begin{theorem}\label{thm:application1}
		Let $S$ be a projective $\mathrm{K}3$ surface such that there exists an isometric embedding of $H^2_{\mathrm{tr}}(S,\QQ)$ into $\Lambda_{\mathrm{Kum}^3}(2)\otimes_{\ZZ}\QQ$, 
		where $(2)$ indicates that the form is multiplied by $2$. 
		Then, there exists an algebraic cycle $\gamma$ on $S\times \mathrm{KS}(S)\times \mathrm{KS}(S)$ inducing an embedding $\gamma_*\colon H^2_{\mathrm{tr}}(S,\QQ)\hookrightarrow H^2(\mathrm{KS}(S)\times \mathrm{KS}(S), \QQ)$
		of rational Hodge structures.
	\end{theorem}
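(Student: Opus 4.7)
The plan is to reduce Theorem~\ref{thm:application1} to Markman's algebraicity results for Kuga--Satake correspondences on hyper-K\"ahler manifolds, using Theorems~\ref{thm:MainResult} and~\ref{thm:AssociatedK3} as a bridge from $S$ to a sixfold of $\mathrm{Kum}^3$-type. The main chain of rational Hodge isometries is
\[
H^2_{\mathrm{tr}}(S,\QQ)\;\xrightarrow{\ \sim\ }\; H^2_{\mathrm{tr}}(S_K,\QQ)\;\xrightarrow{\ \sim\ }\; H^2_{\mathrm{tr}}(Y_K,\QQ)\;\xrightarrow{\ \sim\ }\; H^2_{\mathrm{tr}}(K,\QQ)(2),
\]
and the task is to realise each of these isometries by an algebraic correspondence, and then to quote Markman's theorem on one of the endpoints.

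First I would use the assumed embedding $H^2_{\mathrm{tr}}(S,\QQ)\hookrightarrow \Lambda_{\mathrm{Kum}^3}(2)\otimes \QQ$ together with surjectivity of the period map for $\mathrm{Kum}^3$-type to produce a projective sixfold $K$ of $\mathrm{Kum}^3$-type with a Hodge isometry $H^2_{\mathrm{tr}}(S,\QQ)\cong H^2_{\mathrm{tr}}(K,\QQ)(2)$. Theorems~\ref{thm:MainResult} and~\ref{thm:AssociatedK3} then produce the associated $Y_K$ and $S_K$, completing the chain above; in particular $S$ and $S_K$ are isogenous K3 surfaces. By Buskin's theorem on the algebraicity of rational Hodge isometries between transcendental lattices of K3 surfaces, the first isomorphism is induced by an algebraic cycle on $S\times S_K$.

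Second, the Hodge isometry $H^2_{\mathrm{tr}}(S_K,\QQ)\cong H^2_{\mathrm{tr}}(Y_K,\QQ)$ is realised algebraically by combining the Chern character of a (quasi-)universal family on $S_K\times M_{S_K,H}(v)$, which gives the Mukai identification $H^2_{\mathrm{tr}}(S_K,\QQ)\cong H^2_{\mathrm{tr}}(M_{S_K,H}(v),\QQ)$, with the closure in $Y_K\times M_{S_K,H}(v)$ of the birational map from Theorem~\ref{thm:AssociatedK3}. Next, I would apply Markman's algebraicity of the Kuga--Satake correspondence for projective hyper-K\"ahler manifolds of $\mathrm{K}3^{[n]}$-type to $Y_K$, obtaining an algebraic cycle on $Y_K\times \mathrm{KS}(Y_K)\times \mathrm{KS}(Y_K)$ which realises the Kuga--Satake embedding. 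Composing all the correspondences constructed so far yields an algebraic cycle on $S\times \mathrm{KS}(Y_K)\times \mathrm{KS}(Y_K)$ with the required Hodge-theoretic effect; since $\mathrm{KS}(-)$ is a functor of the rational transcendental weight-two Hodge structure, the already algebraic Hodge isometry between $H^2_{\mathrm{tr}}(S)$ and $H^2_{\mathrm{tr}}(Y_K)$ yields, via functoriality of the Clifford algebra, an algebraic isogeny $\mathrm{KS}(Y_K)\sim \mathrm{KS}(S)$, completing the transfer of $\gamma$ to $S\times \mathrm{KS}(S)\times \mathrm{KS}(S)$.

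The principal obstacle is the precise compatibility of the Kuga--Satake varieties across this chain: $\mathrm{KS}(-)$ is only well-defined up to isogeny, and one must ensure that the identifications $\mathrm{KS}(S)\sim \mathrm{KS}(S_K)\sim \mathrm{KS}(Y_K)$ are realised by algebraic isogenies \emph{compatible} with the correspondences on the K3 and hyper-K\"ahler sides. On the K3 side this is handled by Buskin's theorem, while on the hyper-K\"ahler side it requires checking that Markman's formulation of the Kuga--Satake variety of a $\mathrm{K}3^{[3]}$-type manifold agrees with the classical one attached to its transcendental Hodge structure. A secondary, more routine, technical point is confirming that Markman's statement applies to the specific $Y_K$ produced by our construction and not only to a very general deformation, which should follow from the deformation-theoretic nature of his argument together with the fact that the $Y_K$ span a locally complete family in their moduli space.
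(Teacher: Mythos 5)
Your chain of algebraic correspondences $H^2_{\mathrm{tr}}(S)\to H^2_{\mathrm{tr}}(S_K)\to H^2_{\mathrm{tr}}(M_{S_K,H}(v))\to H^2_{\mathrm{tr}}(Y_K)\to H^2_{\mathrm{tr}}(K)(2)$ matches the paper's (Buskin for the K3--K3 isometry, Mukai's quasi-tautological family together with the standard conjectures for the moduli space, the birational map, and Lemma~\ref{lem:intersectionForm} for the map $r$ of Theorem~\ref{thm:MainResult}); likewise, transporting a Kuga--Satake cycle through an algebraic Hodge isometry via the induced isogeny of Kuga--Satake varieties is exactly Lemma~\ref{lem:KSisogeny}. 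The gap is in the one step that carries all the content: you invoke ``Markman's algebraicity of the Kuga--Satake correspondence for projective hyper-K\"ahler manifolds of $\mathrm{K}3^{[n]}$-type'' applied to $Y_K$. No such result exists. What is known (Voisin \cite{voisinfootnotes}, building on O'Grady \cite{O'G21} and Markman \cite{markman2019monodromy}) is the Kuga--Satake Hodge conjecture for varieties of $\mathrm{Kum}^n$-type, and its proof depends essentially on the odd cohomology $H^3(K)$ realizing $H^1$ of an abelian fourfold of Weil type --- a mechanism with no analogue for $\mathrm{K}3^{[n]}$-type, where $b_3=0$. Worse, the Kuga--Satake conjecture for $Y_K$ is equivalent, via the already-algebraic Mukai isometry, to the Kuga--Satake conjecture for the Picard-rank-$16$ surface $S_K$, which is precisely what the theorem is trying to establish; so as written the argument is circular. (Markman's theorem in \cite{markmanRational} on rational Hodge isometries between $\mathrm{K}3^{[n]}$-type varieties is a different statement and is what the paper uses for Theorem~\ref{thm:application2}, not here.)

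The repair is to quote the known input at the \emph{other} endpoint of your chain: the Kuga--Satake Hodge conjecture holds for the $\mathrm{Kum}^3$-type sixfold $K$; the composite algebraic Hodge isometry $H^2_{\mathrm{tr}}(S)\xrightarrow{\ \sim\ }H^2_{\mathrm{tr}}(K)(2)$ (the rescaling of the form is harmless for the Kuga--Satake construction, Remark~\ref{rmk:trivialRemarks}) induces an isogeny $\mathrm{KS}(K)\sim\mathrm{KS}(S)$, and Lemma~\ref{lem:KSisogeny} then transports the Kuga--Satake cycle from $K\times\mathrm{KS}(K)\times\mathrm{KS}(K)$ to $S\times\mathrm{KS}(S)\times\mathrm{KS}(S)$. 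With that substitution your argument coincides with the paper's proof; your closing concerns about compatibility of the various models of $\mathrm{KS}(-)$ are then handled by Remark~\ref{rmk:trivialRemarks} and Lemma~\ref{lem:KSisogeny} and do not require any deformation-theoretic discussion of which $Y_K$ Markman's theorem applies to.
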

	
	Via Theorem \ref{thm:AssociatedK3}, we deduce this statement from the validity of the Kuga-Satake Hodge conjecture for varieties of $\mathrm{Kum}^n$-type, established by Voisin \cite{voisinfootnotes} as a consequence of results of O'Grady \cite{O'G21} and Markman \cite{markman2019monodromy}.
	Once Theorem \ref{thm:application1} is proven, a result of Varesco \cite{varesco} implies that the Hodge conjecture holds for all powers of any K3 surface satisfying the condition of the above theorem, see Corollary~\ref{cor:HCpowers}.
		
	In the recent preprint \cite{markmanRational}, Markman proves that any rational Hodge isometry $H^2(X,\QQ)\xrightarrow{ \ \sim \ } H^2(X',\QQ)$ between varieties of $\mathrm{K}3^{[n]}$-type is algebraic. He expects that an extension of his argument will lead to the analogous result for varieties of~$\mathrm{Kum}^n$-type. In dimension $6$, we can obtain it from the $\mathrm{K}3^{[3]}$ case via Theorem \ref{thm:MainResult}. 
	
	\begin{theorem}\label{thm:application2}
		Let $K, K'$ be projective varieties of deformation type $\mathrm{Kum}^3$. Let $f\colon H^2(K, \QQ)\xrightarrow{ \ \sim \ } H^2(K', \QQ)$ be a Hodge isometry. Then $f$ is induced by an algebraic correspondence.
	\end{theorem}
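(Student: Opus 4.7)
The plan is to reduce the statement to Markman's algebraicity result for rational Hodge isometries between manifolds of $\mathrm{K}3^{[n]}$-type~\cite{markmanRational}, using the varieties $Y_K$ and $Y_{K'}$ provided by Theorem~\ref{thm:MainResult} as intermediaries. As a preliminary step, note that $f$ preserves the orthogonal decomposition $H^2(K,\QQ)=H^2_{\mathrm{tr}}(K,\QQ)\oplus \NS(K)_\QQ$ of rational Hodge structures, and that its restriction $f|_{\NS(K)_\QQ}\colon\NS(K)_\QQ\to\NS(K')_\QQ$ is automatically induced by an algebraic correspondence, since any $\QQ$-linear map between spaces of algebraic divisor classes is represented by a $\QQ$-combination of products of divisors. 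It therefore suffices to realize the transcendental component $f_{\mathrm{tr}}:=f|_{H^2_{\mathrm{tr}}(K,\QQ)}$ by an algebraic correspondence.

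Applying Theorem~\ref{thm:MainResult} to $K$ and $K'$, I obtain diagrams $K\leftarrow \widetilde K\to Y_K$ of algebraic morphisms whose associated cycle class on $K\times Y_K$ induces the Hodge isometry $H^2_{\mathrm{tr}}(Y_K,\QQ)\xrightarrow{\sim} H^2_{\mathrm{tr}}(K,\QQ)(2)$ stated in the introduction, and similarly for $K'$. Conjugating $f_{\mathrm{tr}}$ through these algebraic correspondences yields a rational Hodge isometry
\[
\phi\colon H^2_{\mathrm{tr}}(Y_K,\QQ)\xrightarrow{\sim} H^2_{\mathrm{tr}}(Y_{K'},\QQ);
\]
the scaling factor $2$ on the bilinear form appears symmetrically on both sides and cancels. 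Algebraicity of $\phi$ will then suffice: composing an algebraic cycle inducing $\phi$ with the two correspondences from Theorem~\ref{thm:MainResult} produces an algebraic correspondence on $K\times K'$ inducing $f_{\mathrm{tr}}$, which combined with $f|_{\NS(K)_\QQ}$ proves the theorem.

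To invoke Markman's theorem it is convenient to extend $\phi$ to a rational Hodge isometry of the full second cohomologies. The rational quadratic spaces $H^2(Y_K,\QQ)$ and $H^2(Y_{K'},\QQ)$ are isometric, since both carry the Beauville--Bogomolov form of the $\mathrm{K}3^{[3]}$-lattice $\Lambda_{\mathrm{K}3^{[3]}}$; because $\phi$ already matches the transcendental parts, Witt's cancellation theorem furnishes a $\QQ$-linear isometry $\psi\colon \NS(Y_K)_\QQ\xrightarrow{\sim}\NS(Y_{K'})_\QQ$, and $g:=\phi\oplus\psi$ is a rational Hodge isometry $H^2(Y_K,\QQ)\xrightarrow{\sim} H^2(Y_{K'},\QQ)$. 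By~\cite{markmanRational}, $g$ is induced by an algebraic correspondence $\Gamma$ on $Y_K\times Y_{K'}$; projecting onto the transcendental parts recovers $\phi$ algebraically, and the argument concludes as above. The principal obstacle in this strategy is the appeal to Markman's deep result from~\cite{markmanRational}; beyond this, the only delicate point is verifying that the isometry of transcendental Hodge structures in Theorem~\ref{thm:MainResult} is truly induced by an algebraic cycle rather than just Hodge-theoretic data, which should be automatic from the geometric construction via blow-ups and quotients by the finite group~$G$.
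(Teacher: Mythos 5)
Your proposal is correct and follows essentially the same route as the paper: reduce to the transcendental part (the N\'eron--Severi component being automatically algebraic), transfer $f_{\mathrm{tr}}$ to a rational Hodge isometry between $H^2_{\mathrm{tr}}(Y_K,\QQ)$ and $H^2_{\mathrm{tr}}(Y_{K'},\QQ)$ via the algebraic correspondences underlying Theorem~\ref{thm:MainResult} (the scaling factors cancelling), and invoke Markman's theorem from \cite{markmanRational}. Your additional step of extending the transcendental isometry to the full second cohomology by Witt cancellation before applying Markman is a harmless refinement that the paper leaves implicit.
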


	\subsection*{Overview of the contents}
	
	We sketch the proof of Theorem \ref{thm:MainResult}.
	First, we calculate the fixed loci $K^g$ of all the automorphims $g\in G$: we show that $\bigcup_{g\neq 1 \in G} K^g$ is the union of $16$ hyper-K\"{a}hler manifolds of $\mathrm{K}3^{[2]}$-type, and we determine the various intersections of these components. Thanks to the deformation invariance of $G$, it is in fact enough to calculate these loci in the special case of the generalized Kummer sixfold on an abelian surface. 
	
	We then describe explicitly the singularities of $K/G$. It turns out that these are of a particular simple nature, modeled on products of ordinary double points on surfaces. A resolution $Y_K$ of $K/G$ is obtained via a single blow-up of the singular locus. The quotient $K/G$ is a primitive symplectic orbifold as studied by Fujiki~\cite{fujiki1983} and Menet~\cite{menet}, and we use a criterion due to Fujiki to prove that $Y_K$ is a hyper-K\"ahler manifold. 
	Moreover, the resolution can be performed in families, so that the~$Y_K$ are deformation equivalent to each other. 
	 To complete the proof of Theorem~\ref{thm:MainResult} it is therefore sufficient to find a single~$K$ of $\mathrm{Kum}^3$-type such that $Y_K$ is of $\mathrm{K}3^{[3]}$-type.
		
	The specific example we study is a Beauville-Mukai system $K_J(v_3)$ on a general principally polarized abelian surface $\Theta\subset J$. It admits a Lagrangian fibration to the linear system $|2\Theta|=\PP^3$, whose general fibres parametrize certain degree $3$ line bundles supported on curves in the linear system. We show that the norm map for line bundles induces a dominant rational map of degree $2^5$ from $K_J(v_3)$ onto a Beauville-Mukai system $M_{\Km(J)}(w_3)$, a moduli space of sheaves on the Kummer $\mathrm{K}3$ surface $\Km(J)$ associated to~$J$; the hyper-K\"{a}hler variety $M_{\Km(J)}(w_3)$ is birational to ${\Km(J)}^{[3]}$. 
	We next prove that the norm map descends to a birational~map
	\[
	K_J(v_3)/G \dashrightarrow M_{\Km(J)}(w_3).
	\] 
 	It follows that the hyper-K\"ahler manifold $Y_{K_J(v_3)}$ is birational to $\mathrm{Km}(J)^{[3]}$, and hence~$Y_{K_J(v_3)}$ is of $\mathrm{K}3^{[3]}$-type by Huybrechts' theorem \cite{Huy99} that birational hyper-K\"{a}hler manifolds are deformation equivalent. 
 
 Once Theorem \ref{thm:MainResult} is proven, the other results follow from it. When $K$ is projective, we define the associated $\mathrm{K}3$ surface $S_K$ as the unique (up to isomorphism) $\mathrm{K}3$ surface whose transcendental lattice is Hodge isometric to $H^2_{\mathrm{tr}}(Y_K, \ZZ)$. 
 This is justified by the fact that $H^2_{\mathrm{tr}}(Y_K, \ZZ)$ has rank at most $6$ and hence it appears as transcendental lattice of some $\mathrm{K}3$ surface of Picard rank at least $16$. Moreover, for such Picard numbers, two $\mathrm{K}3$ surfaces with Hodge isometric transcendental lattices are isomorphic, and hence $S_K$ is determined up to isomorphism. 
 To prove our applications, we exploit the algebraic cycle in $K\times Y_K$ given by the rational map $K\dashrightarrow Y_K$.

The organization of this text is as follows. In Section \ref{sec:FixedLoci} we calculate the fixed locus of the automorphisms of $G$. In Section \ref{sec:specialCase} we study in detail the case of $K_J(v_3)$ as outlined above. In Section \ref{sec:AssociatedK3} we prove Theorems \ref{thm:MainResult} and \ref{thm:AssociatedK3}.
Finally, in Section \ref{sec:KSHodge} we use our construction to prove Theorems \ref{thm:application1} and \ref{thm:application2}.
		
\subsection*{Aknowledgements}
It is a pleasure to thank Ben Moonen, Kieran O'Grady, Claudio Pedrini and Stefan Schreieder for their interest in this project. I am grateful to Lie Fu and Giovanni Mongardi for their comments on a first version of this article, and to Gr\'egoire Menet for useful discussions. I thank Mauro Varesco for suggesting to me Corollary 5.8. I thank Giacomo Mezzedimi and Domenico Valloni for many conversations related to these topics, and the anonymous referee for his/her comments. 
		
	\section{Automorphisms trivial on the second cohomology}\label{sec:FixedLoci}

	Let $K$ be a manifold of $\mathrm{Kum}^3$-type and let $\Aut_0(K)$ be the group of automorphisms of $K$ which act trivially on $H^2(K,\ZZ)$. We let $K^h$ denote the fixed locus of an automorphism $h$ of $K$.
	\begin{definition}\label{def:G}
		We denote by $G$ the subgroup of $\Aut_0(K)$ generated by the automorphisms $g\in\Aut_0(K)$ such that $K^g$ has a $4$-dimensional component.
	\end{definition}
	
	We will see in Lemma \ref{lem:G} below that $G\cong (\ZZ/2\ZZ)^5$. The main result proven in this section is then the following. 
	
	\begin{theorem}\label{thm:fixedLoci}
		Let $Z\subset K$ be defined as $Z\coloneqq \bigcup_{g\neq 1\in G} K^{g}$. Then $Z$ is the union of $16$ fourfolds $Z_j$, each of which is a smooth hyper-K\"{a}hler manifold of $\mathrm{K}3^{[2]}$-type. Moreover, two distinct components intersect in a K3 surface, three distinct components intersect in $4$ points, and four or more distinct components do not intersect.
	\end{theorem}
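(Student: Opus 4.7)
The plan is to apply the Hassett--Tschinkel deformation invariance of $\Aut_0(K)$, together with the fact that fixed loci of finite-order symplectic automorphisms deform smoothly in families, so as to reduce the statement to the model case $K = K_3(A)$ for an abelian surface $A$. Using the description $\Aut_0(K_3(A)) = A[4] \rtimes \langle -1 \rangle$ from \cite{boissiere2011higher}, one identifies the generators of $G$ with a $4$-dimensional fixed component as the $16$ involutions $\iota_t \colon \{a_1,\ldots,a_4\} \mapsto \{t-a_1,\ldots,t-a_4\}$ for $t \in A[2]$; their pairwise products are the translations $T_r$ by $r \in A[2]$, which only have $2$-dimensional fixed loci, so $G \cong A[2] \times \langle -1 \rangle \cong (\ZZ/2\ZZ)^5$ (Lemma \ref{lem:G}). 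The sixteen fourfolds $Z_j$ of the statement are the $4$-dimensional components $Z_t$ of $K^{\iota_t}$, for $t \in A[2]$, whose generic points are the configurations $\{a, t-a, b, t-b\}$ with $(a,b) \in A^2$.

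To see that each $Z_t$ is a smooth hyper-K\"ahler manifold of $\mathrm{K}3^{[2]}$-type, observe that for any $t \in A[2]$ the translation $T_s$ by a chosen $s \in A[4]$ with $2s = t$ lies in $\Aut_0(K)$ and conjugates $\iota_0 = [-1]$ to $\iota_t$, so all $Z_t$ are mutually isomorphic and it suffices to treat $Z_0$. Smoothness is automatic, as $\iota_0$ is a finite-order symplectic automorphism. For the deformation type I would construct the birational map $\Km(A)^{[2]} \dashrightarrow Z_0$ sending an unordered pair $\{\bar a, \bar b\}$ on the Kummer K3 surface $\Km(A) = A/[-1]$ to the configuration $\{a,-a,b,-b\}$; inspecting its extension across the diagonal of $\Km(A)^{[2]}$ and across the $16$ exceptional divisors of $\Km(A)$ identifies $Z_0$ birationally with $\Km(A)^{[2]}$, so by Huybrechts' theorem $Z_0$ has the asserted deformation type.

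The intersection strata are then analysed directly in the Kummer model. For a pair $t, s \in A[2]$ with $r := t-s$, the group $\langle \iota_t, \iota_s \rangle \cong (\ZZ/2\ZZ)^2$ contains $T_r$, and a generic common-fixed configuration has the form $\{a, t-a, s-a, t+s-a\}$; the finite symmetries of this parametrisation yield a Kummer K3 surface associated to the quotient abelian surface $A/\langle r\rangle$. For a triple $t_1, t_2, t_3$ set $R := \langle t_2-t_1, t_3-t_1 \rangle \cong (\ZZ/2\ZZ)^2$; since $R$ acts freely on $A$ by translation, any $4$-element $R$-invariant subscheme of $A$ must be a single $R$-orbit $\{a, a+r_1, a+r_2, a+r_1+r_2\}$, forcing $a \in A[4]$ with $2a \in R$. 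The crucial observation is that such a configuration lies in $Z_{t_i}$, rather than in the $0$-dimensional isolated part of $K^{\iota_{t_i}}$ consisting of configurations of four $\iota_{t_i}$-half-points, if and only if $\iota_{t_i}$ acts without individual fixed points on the orbit, i.e.\ $t_i \neq 2a$. Since the coset $t_1 + R$ has exactly four elements $\{t_1, t_2, t_3, t_4\}$ with $t_4 := t_1+t_2+t_3$, imposing membership in all of $Z_{t_1}, Z_{t_2}, Z_{t_3}$ forces $2a = t_4$, and the $|A[2]|/|R| = 4$ orbits of $a$ with $2a = t_4$ give exactly $4$ points. For any fourth component $Z_{t_4'}$: if $t_4' \in t_1+R$ then $t_4' = t_4$ and the condition $2a \neq t_4'$ contradicts $2a = t_4$, while if $t_4' \notin t_1+R$ the translation subgroup $\langle t_2-t_1, t_3-t_1, t_4'-t_1\rangle$ has order $\geq 8$, incompatible with a $4$-element orbit; either way the quadruple intersection is empty.

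The main technical obstacle is the identification of each $Z_t$ as a manifold of $\mathrm{K}3^{[2]}$-type: while the model $\Km(A)^{[2]} \dashrightarrow Z_0$ is geometrically transparent on the open stratum, verifying that it extends to a birational equivalence of smooth projective varieties, and ruling out extra irreducible components in the $4$-dimensional part of $K^{\iota_0}$, requires careful local analysis near the collision and $2$-torsion loci. The bookkeeping in the triple-intersection step is also delicate: distinguishing configurations in the $4$-dimensional component $Z_{t_i}$ from the $0$-dimensional isolated $\iota_{t_i}$-half-point configurations is precisely what produces the count $4$ for triple intersections and simultaneously forces the quadruple intersections to be empty.
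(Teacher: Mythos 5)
Your overall strategy coincides with the paper's: reduce to the generalized Kummer sixfold $K^3(A)$ by deformation invariance, identify $G$ with $A_2\times\langle -1\rangle$, take the sixteen fourfolds to be the positive-dimensional components of the fixed loci of the involutions $(\tau,-1)$, relate each to $\Km(A)^{[2]}$, and carry out the intersection combinatorics on $A^{(4)}_0$. Your triple- and quadruple-intersection analysis is essentially the paper's Lemma \ref{lem:intersections}, and conjugating by a translation $T_s$ with $2s=\tau$ is a clean way to reduce all sixteen components to a single one.

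There are, however, two genuine gaps. First, the theorem asserts that $Z=\bigcup_{g\neq 1}K^g$ \emph{equals} the union of the sixteen fourfolds, so one must also show that the two-dimensional fixed loci of the translations $T_r$ (the eight K3 surfaces $\overline{V_{r,\theta}}$ for each $r$) and the $140$ isolated fixed points of each $(\tau,-1)$ are all contained in $\bigcup_j Z_j$. You record that translations have two-dimensional fixed loci and that the involutions have isolated fixed configurations, but you never verify this containment; the paper does it explicitly ($V_{\tau,\theta}\subset W_\theta$, and each isolated point $(\epsilon_1,\epsilon_2,\epsilon_3,\epsilon_4)$ rewrites as $(\epsilon_1,\epsilon_2,\epsilon_1+\theta,\epsilon_2+\theta)\in W_{\tau+\theta}$). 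Second, the step you defer as ``the main technical obstacle'' is precisely where the paper's real work lies: it constructs $(\Km^{\tau}(A))^{[2]}\to K$ and proves, via the Kamenova--Mongardi--Oblomkov local model (Proposition \ref{lem:local}), that it extends to a closed embedding with image $\overline{W_{\tau}}$; this single argument settles both the extension over the Hilbert--Chow exceptional locus and the fact that the positive-dimensional part of $K^{(\tau,-1)}$ is exactly $\overline{W_{\tau}}$. Your alternative route (birational map plus Huybrechts) additionally requires knowing that $Z_0$ is an irreducible holomorphic symplectic manifold, which again rests on the analysis you have not performed. Finally, two sign slips: a configuration invariant under $R$ and fixed by $\iota_{t_1}$ has $2a\in t_1+R$, not $2a\in R$, and the pairwise-intersection parametrisation should be $\{a,\,t-a,\,s-a,\,a+t+s\}$ rather than $\{a,\,t-a,\,s-a,\,t+s-a\}$ (your fourth point gives a cycle not summing to zero); neither affects your conclusions, since the later steps use the correct conditions.
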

	
	In order to prove this, we will compute the fixed loci of all automorphisms in~$G$. 
	Thanks to the deformation invariance of automorphisms trivial on the second cohomology it will be sufficient to treat the case of the generalized Kummer variety associated to an abelian surface $A$, i.\ e.\ $K$ is the fibre over $0$ of the composition
	\[
	A^{[4]} \xrightarrow{ \ \nu \ } A^{(4)} \xrightarrow{ \ \sum \ } A
	\]
	of the Hilbert-Chow morphism with the summation map. We denote by $A^{(4)}_0\subset A^{(4)}$ the fibre over $0$ of $\sum$. The restriction $\nu \colon K\to A^{(4)}_0$ is a crepant resolution. 
	
	In this case the group of automorphisms of $K$ acting trivially on the second cohomology has the natural description  
	$
	\Aut_0(K) = A_4 \rtimes \langle -1 \rangle,
	$
	and the action on $K$ and~$A^{[4]}$ is induced by that on $A$, see \cite{boissiere2011higher}.
	
	\subsection{Notation}
	We let $A_n$ be the group of points of order $n$ of $A$.
	Via the isomorphism above, we write the elements of $\Aut_0(K^3(A))$ as $(\epsilon, \pm 1)$ for $\epsilon \in A_4$. For $\tau\in A_2$, we denote by $A_{2,\tau}$ the set $\{a\in A \ | \ 2a=\tau\}$, which consists of $16$ points. The quotient surface $A/\langle (\tau, -1) \rangle$ has $16$ nodes corresponding to points in $A_{2,\tau}$;
	its minimal resolution~$\Km^{\tau}(A)$ is isomorphic to the Kummer $\mathrm{K}3$ surface associated to~$A$.
	
	\begin{lemma}\label{lem:G}
	Let $K$ be any manifold of $\mathrm{Kum}^3$-type. Then $G\cong (\ZZ/2\ZZ)^5$.
	\end{lemma}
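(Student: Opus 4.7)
The plan is to use the deformation invariance of $\Aut_0$ (recalled in the paragraph preceding the lemma) together with the fact that the condition ``the fixed locus contains a $4$-dimensional component'' is constant in a smooth family of automorphisms, so that $G$ is itself deformation invariant. It therefore suffices to compute $G$ in the explicit case $K = K^3(A)$, where $\Aut_0(K) = A_4 \rtimes \langle -1 \rangle$.

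Because a symplectic automorphism of a hyper-K\"ahler manifold has smooth fixed locus with even-dimensional components determined by the $+1$-eigenspace of its tangent action, every generator of $G$ must be an involution. I would enumerate the involutions in $A_4 \rtimes \langle -1 \rangle$ in three families: (i) non-trivial translations $(\tau, 1)$ with $\tau \in A_2$, (ii) ``twisted involutions'' $(\tau, -1)$ with $\tau \in A_2$, and (iii) $(\epsilon, -1)$ with $\epsilon \in A_4 \setminus A_2$. A reduced fixed length-$4$ subscheme of $A$ consists, respectively, of a pair of $\tau$-orbits, a pair of $(a,\tau - a)$-orbits, or a pair of $(a,\epsilon - a)$-orbits, and each of these parameter spaces is $4$-dimensional in $A^{[4]}$. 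I would then restrict to $K$ via the sum map: in case (i) the sum map on this locus is surjective onto $A$, cutting the fixed locus of $K$ down to dimension $2$; in case (ii) the sum equals $2\tau = 0$, so the whole $4$-dimensional locus already lies in $K$; in case (iii) the sum equals $2\epsilon \ne 0$, so the $4$-dimensional locus is disjoint from $K$.

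Consequently $G$ is generated precisely by the $16$ involutions $\{(\tau,-1) : \tau \in A_2\}$. From $(\tau_1,-1)(\tau_2,-1) = (\tau_1 - \tau_2, 1)$ and the fact that $-1$ acts trivially on $A_2$, the subgroup they generate is $A_2 \times \langle -1 \rangle \cong (\ZZ/2\ZZ)^5$.

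The main technical obstacle is verifying that no $4$-dimensional fixed component on $K$ arises from \emph{non-reduced} length-$4$ subschemes in cases (i) and (iii). This requires a local analysis of the punctual Hilbert scheme supported at the fixed points of the induced surface involution (i.e.\ the points $a$ with $2a = \epsilon$ in case (iii), and length-$2$ fat-point configurations above $\tau$-orbits in case (i)), combined with the sum constraint. Configurations of this shape turn out to have dimension at most $3$, so the $4$-dimensional threshold is never reached and the above list of generators is complete.
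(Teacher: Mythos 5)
Your overall strategy is the same as the paper's (reduce to $K=K^3(A)$ by deformation invariance, then sift the elements of $\Aut_0(K^3(A))=A_4\rtimes\langle -1\rangle$ for $4$-dimensional fixed components), and your treatment of the three families of involutions via the sum map is exactly the computation the paper performs. However, there is a genuine gap at the very first step of the sifting: the claim that ``every generator of $G$ must be an involution'' does not follow from the fact that fixed loci of symplectic automorphisms are smooth, symplectic and even-dimensional. At a point of a $4$-dimensional fixed component of $g$, the tangent action has eigenvalues $(1,1,1,1,\lambda,\lambda^{-1})$ with $\lambda\neq 1$ a root of unity, and nothing forces $\lambda=-1$; an order-$4$ symplectic automorphism of a sixfold can perfectly well fix a fourfold (eigenvalues $i,-i$ on the normal bundle). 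As a result, your enumeration silently omits the order-$4$ elements $(\epsilon,1)$ with $\epsilon\in A_4\setminus A_2$, which must be ruled out separately. The paper does this by invoking Oguiso's result (\cite[Lemma 3.5]{oguiso2020no}) that for $2\epsilon\neq 0$ the fixed locus of translation by $\epsilon$ on $K^3(A)$ consists of isolated points; a direct substitute is to note that a reduced fixed subscheme must be supported on a single orbit $\{a,a+\epsilon,a+2\epsilon,a+3\epsilon\}$, a $2$-parameter family on which the sum condition $4a+6\epsilon=0$ is finite-to-one, so no positive-dimensional component survives in $A^{(4)}_0$.

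Two smaller remarks. First, what you call the main technical obstacle (non-reduced configurations and the fibres of the Hilbert--Chow morphism) is dispatched in the paper without any local analysis of punctual Hilbert schemes: since $K^h$ is a union of even-dimensional symplectic varieties, since its image in $A^{(4)}_0$ has dimension at most $2$ in the problematic cases, and since the fibres of $\nu$ have dimension at most $3$, no $4$-dimensional component can appear; you should make this (or an equivalent bound) explicit rather than assert it. Second, your group-theoretic conclusion $(\tau_1,-1)(\tau_2,-1)=(\tau_1-\tau_2,1)$ and $G=A_2\times\langle -1\rangle\cong(\ZZ/2\ZZ)^5$ is correct and agrees with the paper.
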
 
	\begin{proof}
		Since automorphisms in $\Aut_0(K)$ deform with $K$, their fixed loci deform as well. Therefore, it suffices to prove the Lemma for the generalized Kummer variety $K=K^3(A)$ on an abelian surface $A$. In this case we show that
		\[
		G=A_2\times \langle -1\rangle \cong (\ZZ/2\ZZ)^5.
		\]
		
		We have $\Aut_0(K^3(A))= A_4\rtimes \langle -1\rangle$. If $h\in\Aut_0(K^3(A))$ is induced by translation by $0\neq \epsilon\in A_4$, then, by \cite[Lemma 3.5]{oguiso2020no}, the fixed locus $(K^3(A))^h$ is a union of K3 surfaces if $2\epsilon=0$ and consists of isolated point if $2\epsilon\neq 0$. 
		Consider now $h=(\epsilon, -1)$. 
		The fixed locus of $h=(\epsilon, -1)$ on $A^{(4)}$ consists~of
		\[
		W_{\epsilon} = \{ (a, -a -\epsilon , b , - b - \epsilon), \ \text{ for } a,b \in A \} \subset A^{(4)},
		\]
		and subvarieties of lower dimension. 
		If $\epsilon\in A_2$, then $W_{\epsilon}$ is contained in $A^{(4)}_0$, and hence $(\epsilon, -1)\in G$. 
		If $\epsilon \notin A_2$, then $W_{\epsilon}$ has empty intersection with $A^{(4)}_0$; the action of~$h$ on~$A^{(4)}_0$ fixes isolated points and the surfaces $\{ (\alpha , b, -b-\epsilon, - \alpha + \epsilon),\ b\in A\}$, where~$\alpha\in A$ satisfies $\alpha=-\alpha-\epsilon$. The general point of such a surface is supported on~$4$ distinct point. Therefore, the fixed locus $(K^3(A))^h$ consists of surfaces and isolated points, because it is a union of symplectic varieties and the fibres of $\nu$ have dimension at most $3$.  
		We conclude that $G$ is generated by the $16$ involutions $(\tau,-1)$ with $\tau\in A_2$, and hence $G=A_2\times \langle -1\rangle$.
	\end{proof}
	
	\subsection{Relevant subvarieties} 
	In what follows we let $K=K^3(A)$ be the generalized Kummer variety on an abelian surface $A$. We will calculate the fixed locus of automorphisms in $G=A_2\times \langle -1\rangle$ acting on $K$. The next definition introduces the relevant subvarieties. We denote by $\xi=(a,b,c,d)$ a point of $A_0^{(4)}$, and refer to $\{a,b,c,d\}$ as the support of $\xi$.
	\begin{definition} \label{def:RelevantLoci}
		We define the following subvarieties of $A_0^{(4)}$:
		 \begin{itemize}
		 	\item for any $\tau\in A_{2}$, we let \[W_{\tau}\coloneqq \{(a, b, -a+\tau, -b+\tau), \ \text{ for } \ a,b\in A \};
		 	\]
		 	\item for a pair $(\tau, \theta)\in A_2 \times A_2$ with $\tau\neq 0$, we let 
		 	\[
		 	V_{\tau, \theta} \coloneqq \{ (a, a+\tau, -a+\theta , -a+\tau+\theta), \ \text{ for } \ a\in A\}.
		 	\]
		 \end{itemize} 
	 We denote by $\overline{W_{\tau}}$ and $\overline{V_{\tau, \theta}}$ the subvarieties of $K$ obtained as strict transform of $W_{\tau}$ and $V_{\tau, \theta}$ under the Hilbert-Chow morphism, respectively.
	\end{definition} 

Note that none among the $W_{\tau}$ and $V_{\tau, \theta}$ is contained in the exceptional locus of $\nu\colon K\to A^{(4)}_0$, so that $\overline{W_{\tau}}$ and $\overline{V_{\tau,\theta}}$ are well-defined. We will show in Lemma \ref{lem:GeometricDescription} that the $\overline{W_{\tau}}$ are hyper-K\"ahler varieties of $\mathrm{K}3^{[2]}$-type and that the $\overline{V_{\tau,\theta}}$ are K3 surfaces.

\begin{remark} \phantomsection \label{rmk:DescriptionRelevantLoci}
\begin{enumerate}[label=(\roman*)]
	\item For $\tau \in A_2$, the $G$-equivariant morphism $\psi_{\tau}\colon A\times A\to A^{(4)}$ such that $(a,b)\mapsto (a, b, -a+\tau, -b+\tau)$ has degree $8$ and induces an isomorphism $$(A/ \langle (\tau, -1) \rangle)^{(2)} \xrightarrow{\ \sim \ } W_{\tau}.$$
	Moreover, $W_{\tau}\neq W_{\tau'}$ unless $\tau=\tau'$, as can be seen considering a point $(\epsilon, \epsilon, \epsilon, \epsilon)$ for some $\epsilon\in A_{2,\tau}$.
	\item Given $0\neq \tau \in A_2$ and $ \theta \in A_2$, the $G$-equivariant morphism $\psi_{\tau, \theta}\colon A\to A^{(4)}$ defined by $a \mapsto (a, a+\tau, -a+\theta, -a+\tau+\theta)$ induces an isomorphism
	$$ A/\langle \tau, (\theta, -1) \rangle \xrightarrow{\ \sim \ } V_{\tau, \theta}.$$ 
	For a given $\tau\in A_2$, the union $\bigcup_{\theta\in A_2} V_{\tau, \theta}$ has $8$ irreducible components. In fact, $V_{\tau, \theta}=V_{\tau, \theta'}$ if and only if $\theta' \in \{\theta, \theta+\tau\}$, which can be seen considering a point $(a, a+\tau, a, a+\tau)$: it belongs to $V_{\tau, \theta}$ if and only if $a\in A_{2, \theta}$ or $a\in A_{2,\theta+\tau}$. Moreover, it can be easily checked that $V_{\tau, \theta}\neq V_{\tau', \theta'}$ for $\tau\neq \tau'$. In total, we therefore have $120$ distinct surfaces $V_{\tau, \theta}$.
\end{enumerate}	
\end{remark} 

We recall a result of Kamenova-Mongardi-Oblomonkov which will be used in the sequel.

\begin{proposition}[{\cite[Lemma B.1]{KMO}}]\label{lem:local}
	Let $\iota\colon \CC^2\to \CC^2$ be the involution given by $(x,y)\mapsto (-x, -y)$. Consider the rational map
	$$\Phi\colon (\mathrm{Bl}_{0}(\CC^2/\iota))^{[n]} \dashrightarrow (\CC^2)^{[2n]} $$
	which maps a general $\zeta={(P_1, \dots, P_{n})}$ to $\xi={(P_1, \iota(P_1), \dots, P_n, \iota(P_n))}$. Then $\Phi$ extends to a closed embedding, with image the fixed locus for the action of $\iota$ on $(\CC^2)^{[2n]}$.	
\end{proposition}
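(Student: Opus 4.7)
The plan is to realise $\tilde S := \mathrm{Bl}_0(\CC^2/\iota)$ as the quotient of $\tilde{\CC^2}:=\mathrm{Bl}_0(\CC^2)$ by the lifted involution $\tilde\iota$. Since $\iota=-\mathrm{id}$ acts trivially on projective tangent directions at $0$, the fixed locus of $\tilde\iota$ is exactly the exceptional $\PP^1\subset\tilde{\CC^2}$, so $\tilde{\CC^2}/\tilde\iota$ is smooth and $\tilde\pi\colon\tilde{\CC^2}\to\tilde S$ is a flat double cover ramified along the $(-2)$-curve $E\subset\tilde S$. Write $\beta\colon\tilde{\CC^2}\to\CC^2$ for the blowdown. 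Away from $E$, the desired $\Phi$ is just $\zeta\mapsto\beta(\tilde\pi^{-1}(\zeta))$, which is manifestly $\iota$-invariant of length $2n$. To extend to all of $\tilde S^{[n]}$ — the naive scheme-theoretic formula fails near $E$, because pushforward of ideals from $\tilde{\CC^2}$ may drop length — I would set up a matching between $\iota$-invariant length-$2k$ ideals $I\subset\CC[[x,y]]$ and length-$k$ subschemes of $\tilde S$ supported in an infinitesimal neighbourhood of $E$. In local coordinates where $\tilde\iota(x,t)=(-x,t)$ and $\beta(x,t)=(x,xt)$, this correspondence is defined via explicit formulas in the invariant subring $\CC[[x^2,xy,y^2]]\subset\CC[[x,y]]$; globalising yields the morphism $\Phi\colon \tilde S^{[n]}\to((\CC^2)^{[2n]})^\iota$.

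Next, I would show $\Phi$ is bijective onto the fixed locus. Any $\iota$-invariant $\xi\subset\CC^2$ of length $2n$ decomposes canonically as $\xi=\xi_0\sqcup\xi'$, where $\xi'$ is supported off $0$ — a disjoint union of free $\iota$-orbits $\{P,\iota(P)\}$ which correspond uniquely to a subscheme of $\tilde S\setminus E$ of half the length — and $\xi_0$ is cut out at the origin by an $\iota$-invariant ideal as above. The local matching provides a unique preimage of $\xi_0$ in a neighbourhood of $E\subset\tilde S$, and gluing the two pieces yields a unique $\zeta\in\tilde S^{[n]}$ with $\Phi(\zeta)=\xi$.

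Finally, $\tilde S^{[n]}$ is smooth of dimension $2n$ and the fixed locus $((\CC^2)^{[2n]})^\iota$ is smooth of the same dimension (by a Luna-slice linearisation of the $\iota$-action around any fixed point), so a set-theoretic bijection that is injective on Zariski tangent spaces is an isomorphism onto its image, and $\Phi$ is then a closed embedding. Injectivity of $d\Phi$ reduces once more to the local analysis at $E$: an $\iota$-equivariant first-order deformation of $I\subset\CC[[x,y]]$ is determined by the deformation of the corresponding subscheme of $\tilde S$ under the matching. The main obstacle is precisely this local analysis at the ramification divisor — setting up the explicit length-preserving correspondence between $\iota$-invariant ideals of $\CC[[x,y]]$ and subschemes of $\tilde S$ supported on $E$. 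Once this local model is in hand, the remaining checks are largely formal, and the whole construction deforms in families, giving a single global $\Phi$.
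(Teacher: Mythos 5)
The paper does not actually prove this proposition: it is imported verbatim as \cite[Lemma B.1]{KMO}, so there is no internal argument to compare yours against. Judged on its own terms, your proposal correctly sets up the right geometric framework: $\mathrm{Bl}_0(\CC^2/\iota)$ is indeed the quotient of $\mathrm{Bl}_0(\CC^2)$ by the lifted involution, which fixes the exceptional $\PP^1$ pointwise; the map away from $E$ is the obvious one; an $\iota$-invariant $\xi$ of length $2n$ does split canonically into a punctual part at the origin of even colength and a union of free orbits; and a bijective morphism between smooth varieties with everywhere injective differential is a closed embedding onto its (closed) image. All of that is sound and, as you say, essentially formal.

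The problem is that the one step you defer --- the length-halving correspondence between $\iota$-invariant finite-colength ideals of $\CC[[x,y]]$ and punctual subschemes of $\mathrm{Bl}_0(\CC^2/\iota)$ supported on $E$ --- is not a technical afterthought but the entire mathematical content of the lemma. You correctly observe that the naive operations (scheme-theoretic preimage under $\tilde\pi$ followed by pushforward under $\beta$, or conversely) do not preserve length along the ramification locus, and then you write that you ``would set up a matching \dots via explicit formulas in the invariant subring $\CC[[x^2,xy,y^2]]$'' without exhibiting the formulas, proving the matching is algebraic (so that it glues with the open part into a morphism), bijective, or injective on tangent spaces. Every one of your subsequent claims (extension of $\Phi$ to a morphism, surjectivity onto the fixed locus, injectivity of $d\Phi$ along $E$) is explicitly reduced to this unproved local model, so as written this is a credible plan rather than a proof. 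To close the gap you would need either the explicit ideal-theoretic computation (as in the appendix of \cite{KMO}) or an independent mechanism such as the derived McKay correspondence identifying $\iota$-fixed punctual strata of $(\CC^2)^{[2n]}$ with punctual Hilbert schemes of the minimal resolution.
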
 

The next lemma describes the varieties $\overline{V_{\tau, \theta}}$ and $\overline{W_{\tau}}$ introduced above.

\begin{lemma} \phantomsection \label{lem:GeometricDescription}
	\begin{enumerate}[label=(\roman*)]
		\item For $\tau\neq 0$, $ \theta\in A_2$, the rational map $\phi_{\tau,\theta}$ defined by the commutative diagram
		\[
		\begin{tikzcd}
				\Km^{\theta} (A/\langle \tau \rangle) \arrow[dashed]{r}{\phi_{\tau,\theta}} \arrow{d}{q} & K \arrow{d}{\nu}  \\
				A/\langle\tau, (\theta, -1) \rangle \arrow{r}{\psi_{\tau,\theta}} & A^{(4)}_0
		\end{tikzcd}
		\]
		where $q$ is the natural map, extends to a regular morphism, which is an isomorphism onto its image $\overline{V_{\tau, \theta}}$.
		\item For $\tau\in A_2$, the rational map $\phi_{\tau}$ defined by the commutative diagram
		\[
		\begin{tikzcd}
		(\Km^{\tau}(A))^{[2]} \arrow[dashed]{r}{\phi_{\tau}} \arrow{d}{q} & K \arrow{d}{\nu}\\
		(A/\langle (\tau, -1)\rangle )^{(2)} \arrow{r}{\psi_{\tau}} & A^{(4)}_0
		\end{tikzcd}
		\] 
	where $q$ is the composition of the Hilbert-Chow morphism with the natural map $(\Km^{\tau}(A))^{(2)} \to (A/\langle (\tau, -1)\rangle )^{(2)}$, extends to a regular morphism, which is an isomorphism onto its image $\overline{W_{\tau}}$. 
	\end{enumerate}
\end{lemma}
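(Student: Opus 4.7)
The plan is to treat both parts by an étale-local analysis of the rational maps, invoking Proposition \ref{lem:local} for the extension across the exceptional loci. On the open locus where the source parametrizes a collection of four distinct points of $A$, both $\phi_{\tau,\theta}$ and $\phi_\tau$ are visibly regular morphisms that are injective and land in $K\subset A^{[4]}$, since the defining sum is $2\tau+2\theta=0$ in (i) and $2\tau=0$ in (ii). The real task is the regular extension across the preimage of the exceptional divisors of the respective Kummer resolutions, together with the check that the global extension is a closed embedding.

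For part (ii), I would observe that analytic-locally at each of the $16$ nodes of $A/\langle(\tau,-1)\rangle$ the involution $(\tau,-1)$ acts as $\iota$ on a chart $(\CC^2,0)$ of $A$ centered at the corresponding point of $A_{2,\tau}$, and $\Km^\tau(A)$ is the local $\mathrm{Bl}_0(\CC^2/\iota)\to\CC^2/\iota$. A point of $(\Km^\tau(A))^{[2]}$ lying over a pair of (possibly equal) nodes $\{p_1,p_2\}$ has the following étale-local model: when $p_1=p_2$, the source is $(\mathrm{Bl}_0(\CC^2/\iota))^{[2]}$, the target $A^{[4]}$ is $(\CC^2)^{[4]}$, and the map is exactly $\Phi$ from Proposition \ref{lem:local} with $n=2$; when $p_1\neq p_2$ the picture factors into two copies of Proposition \ref{lem:local} with $n=1$. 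The cases in which at most one point lies over a node are analogous after splitting off a smooth factor. Each such local closed embedding agrees with $\phi_\tau$ on its open part, so uniqueness of morphisms into the separated target glues them to a global closed embedding, with image $\overline{W_\tau}$ by Remark \ref{rmk:DescriptionRelevantLoci}(i).

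For part (i), I would set $A'\coloneqq A/\langle\tau\rangle$ and note that $(\theta,-1)$ descends to an involution $\iota'$ on $A'$ whose $16$ fixed points lift to $A_{2,\theta}\cup A_{2,\theta+\tau}\subset A$, so that $\Km^\theta(A/\langle\tau\rangle)$ is the standard Kummer resolution of $A'/\langle\iota'\rangle$. Away from the fixed points of $\iota'$, the assignment $\bar a\mapsto\{a,a+\tau,-a+\theta,-a+\tau+\theta\}$ is a regular morphism into $K$. Near a fixed point $\bar a$ of $\iota'$, the four-point orbit in $A$ splits as two disjoint clusters $\{a,-a+\theta\}$ and $\{a+\tau,-a+\tau+\theta\}$ lying over two $\tau$-related points $\tilde a_1,\tilde a_2\in A_{2,\theta}\cup A_{2,\theta+\tau}$; in local charts of $A$ centered at each $\tilde a_i$, the stabilizer of $\tilde a_i$ in $\langle\tau,(\theta,-1)\rangle$ (which is either $(\theta,-1)$ or $(\theta+\tau,-1)$) acts as $\iota$, and the corresponding cluster has the form $\{P,\iota P\}$. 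Applying Proposition \ref{lem:local} with $n=1$ independently in each chart yields a local closed embedding extending $\phi_{\tau,\theta}$ across the exceptional divisor, and these glue to a global closed embedding with image $\overline{V_{\tau,\theta}}$ by Remark \ref{rmk:DescriptionRelevantLoci}(ii).

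The main obstacle is the local extension along the exceptional divisors, which is precisely the content of Proposition \ref{lem:local}. Beyond this, the remaining work is bookkeeping: selecting compatible analytic charts on $A$ so that translation by $\tau$ identifies the two clusters in part (i), verifying that the local closed embeddings agree on overlaps, and observing that the resulting image subscheme retains enough structure (a $\langle(\tau,-1)\rangle$-, respectively $\langle\tau,(\theta,-1)\rangle$-, invariant length-$4$ subscheme with the prescribed clustering) to recover the source point uniquely, thereby confirming the claimed isomorphism onto the image.
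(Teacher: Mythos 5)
Your proposal is correct and follows essentially the same route as the paper: a case analysis on the support of the image cycle, with the extension across the exceptional loci supplied by Proposition \ref{lem:local} and the global statement obtained by gluing the local closed embeddings. The only cosmetic difference is that you invoke Proposition \ref{lem:local} uniformly (with $n=1$ as well as $n=2$), whereas the paper handles the shallower cases by direct canonical identifications of the fibres of $q$ and $\nu$ with products of projectivized tangent spaces and reserves the proposition for the deepest case.
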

\begin{proof}
	We prove $(i)$. As translations are fixed point free, for any $\alpha \in A/\langle \tau, (\theta, -1)\rangle$ the support of $\psi_{\tau, \theta} (\alpha)\in V_{\tau, \theta}$ consists of either~$2$ or $4$ distinct points, according to whether $\alpha$ is a node or not. If $\alpha$ is not a node $q^{-1}(\alpha)$ is a single point at which~$\phi_{\tau, \theta}$ is well-defined.
	Otherwise, $\psi_{\tau, \theta} (\alpha) =(a, a+\tau, a, a+\tau)$ for some $a$ in $A_{2, \theta}$ or $A_{2,\tau+\theta}$. 	
	Then, there is a canonical identification $\nu^{-1}(\psi_{\tau, \theta} (\alpha))=\PP(T_aA) \times \PP(T_{a+\tau} A)$. The exceptional divisor in $\Km^{\theta}(A/\langle \tau\rangle)$ corresponding to the node $\alpha$ is identified with $\PP(T_a(A))$. Translation by $\tau$ gives an isomorphism $\PP(T_a(A))\xrightarrow{\ \sim \ } \PP(T_{a+\tau})$, and the morphism $\phi_{\tau, \theta}$ is extended via $\phi_{\tau, \theta} (t)\coloneqq (t, \tau(t))$ for $t\in \PP(T_a(A))$. 
	
	The proof of $(ii)$ is similar. 
	If $\psi_{\tau} (\alpha, \beta)$ consists of~$4$ distinct points then $q^{-1}(\alpha, \beta)$ is a single point at which $\phi_{\tau}$ is well-defined. 
	If the support of $\psi_{\tau}(\alpha, \beta)$ consists of~$3$ distinct points, then exactly one between $\alpha$ and $\beta$ is a node of $A/\langle (\tau, -1) \rangle$, and $\psi_{\tau}(\alpha, \beta)=(a, \epsilon, -a+\tau, \epsilon)$ for some $a\in A\setminus A_{2,\tau}$ and $\epsilon\in A_{2,\tau}$; we extend $\phi_{\tau}$ via the canonical identifications $q^{-1}(\alpha, \beta) = \PP(T_{\epsilon}(A))=\nu^{-1}(\psi_{\tau}(\alpha,\beta))$.   
	If the support of $\psi_{\tau}(\alpha, \beta)$ consists of $2$ distinct points then either $\alpha=\beta$ for some smooth point or $\alpha\neq \beta$ with both $\alpha$ and $\beta$ nodes of $A/\langle(\tau,-1)\rangle$. 
	In the first case there exists $a\in A\setminus A_{2,\tau}$ such that $\psi_{\tau}(\alpha,\beta)=(a, a, -a+\tau, -a+\tau)$. Then $q^{-1}(\alpha, \beta) = \mathbb{P}(T_a A)$ and $\nu^{-1}(\psi_{\tau} (\alpha, \beta))= \PP(T_{a}A)\times \PP(T_{-a+\tau} A)$, and we define~$\phi_{\tau}$ by $t\mapsto (t, (\tau, -1)(t))$. 
	In the second case $\psi_{\tau}(\alpha, \beta)= (\epsilon_1, \epsilon_2, \epsilon_1, \epsilon_2)$ for some $\epsilon_1\neq \epsilon_2$ both in $A_{2,\tau}$, and we extend $\phi_{\tau}$ via the canonical isomorphisms $q^{-1}(\alpha, \beta) = \PP(T_{\epsilon_1}A)\times \PP(T_{\epsilon_2}A) = \nu^{-1}(\psi_{\tau}(\alpha,\beta))$.
	Finally, consider the case when $\alpha=\beta$ and $\alpha$ is a node.
	Locally analytically around the node $\alpha$ of $A/\langle (\tau, -1)\rangle$, the morphism $q$ is isomorphic to the composition 
	\[
	 (\mathrm{Bl}_0(\CC^2/\iota))^{[2]}\to   (\mathrm{Bl}_0(\CC^2/\iota))^{(2)} \to (\CC^2/\iota)^{(2)},
	\]
	where $\iota(x,y)=(-x,-y)$ is the restriction of the involution $(\tau, -1)$. The rational map $\phi_{\tau}$ is identified with $\Phi \colon (\mathrm{Bl}_0(\CC^2/\iota))^{[2]} \dashrightarrow (\CC^2)^{[4]}$ which maps a general $\zeta=(\alpha,\beta)$ to the subscheme $\xi=(\alpha,\iota(\alpha), \beta, \iota(\beta))$. Then Proposition \ref{lem:local} implies that $\phi_\tau$ extends to a closed embedding $(\mathrm{Bl}_0(\CC^2/\iota))^{[2]}\hookrightarrow (\CC^2)^{[4]}$, completing the proof.
\end{proof}	

The various intersections of the submanifolds $\overline{W_{\tau}}\subset K$ are as follows.
\begin{lemma}\phantomsection
	\label{lem:intersections}
	\begin{enumerate}[label=(\roman*)]
		\item For $\tau_1 \neq \tau_2$ in $A_2$ we have $\overline{W_{\tau_1}}\cap \overline{W_{\tau_2}}=\overline{V_{\tau_1+\tau_2, \tau_1}}$.
		\item For pairwise distinct $\tau_1$, $\tau_2$, $\tau_3$ in $A_2$, the intersection $\overline{W_{\tau_1}}\cap \overline{W_{\tau_2}}\cap \overline{W_{\tau_3}}$ consists of the $4$ distinct points of the set
		\[
		\nu^{-1}( \{ (a, a+\tau_1+\tau_2, -a+\tau_1, -a+\tau_2) \in A_0^{(4)}, \ a\in A_{2, \tau_1+\tau_2+\tau_3}\}) \subset K.
		\]
		\item The intersection of $4$ or more distinct submanifolds $\overline{W_{\tau}}$ is empty.
	\end{enumerate} 
\end{lemma}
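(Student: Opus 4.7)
The plan is to translate each intersection statement to a set-theoretic calculation in $A^{(4)}_0$ via the Hilbert--Chow morphism $\nu\colon K\to A^{(4)}_0$. Over the locus of $4$-tuples with $4$ distinct support points, $\nu$ is an isomorphism, so computations in $A$ transfer directly; near $4$-tuples with non-reduced support I will use Lemma~\ref{lem:GeometricDescription} to track the strict transforms along the exceptional divisors of $\nu$.

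For part (i), the inclusion $\overline{V_{\tau_1+\tau_2,\tau_1}}\subseteq \overline{W_{\tau_1}}\cap\overline{W_{\tau_2}}$ is a direct substitution: the generic point $(a,\, a+\tau_1+\tau_2,\, -a+\tau_1,\, -a+\tau_2)$ of $V_{\tau_1+\tau_2,\tau_1}$ can be written with pair-sums $\tau_1$ and with pair-sums $\tau_2$ via the pairings $\{\{a,-a+\tau_1\},\{a+\tau_1+\tau_2,-a+\tau_2\}\}$ and $\{\{a,-a+\tau_2\},\{a+\tau_1+\tau_2,-a+\tau_1\}\}$ respectively. For the reverse inclusion I enumerate the three possible pairings of a generic $4$-element support $(a,b,-a+\tau_1,-b+\tau_1)$ of a point of $W_{\tau_1}\cap W_{\tau_2}$: one of them would force $\tau_1=\tau_2$ and is excluded, while the other two yield (after using $2\tau_i=0$) the conditions $b=-a+\tau_2$ and $b=a+\tau_1+\tau_2$, both of which cut out the same $2$-dimensional locus $V_{\tau_1+\tau_2,\tau_1}$. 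Equality of strict transforms then follows from the irreducibility of $\overline{V_{\tau_1+\tau_2,\tau_1}}$, together with a local verification along the exceptional divisors of $\nu$ using the explicit embeddings $\phi_{\tau_1},\phi_{\tau_2}, \phi_{\tau_1+\tau_2,\tau_1}$ from Lemma~\ref{lem:GeometricDescription}.

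For part (ii), I intersect the parametrization of (i) with the condition of lying in $\overline{W_{\tau_3}}$. Requiring that $(a,\, a+\tau_1+\tau_2,\, -a+\tau_1,\, -a+\tau_2)$ also admit a pairing with pair-sums $\tau_3$ eliminates the two pairings that would force $\tau_3\in\{\tau_1,\tau_2\}$, leaving only the single equation $2a=\tau_1+\tau_2+\tau_3$, i.e., $a\in A_{2,\tau_1+\tau_2+\tau_3}$. Since the $\tau_i$ are pairwise distinct, each such tuple has $4$ distinct support points, so it lies in the smooth locus of $A^{(4)}_0$ and determines a unique preimage in $K$. Using $2a=\tau_1+\tau_2+\tau_3$ I rewrite the support as $\{a,\,a+\tau_1+\tau_2,\,a+\tau_1+\tau_3,\,a+\tau_2+\tau_3\}$, visibly invariant under translation by the order-$4$ subgroup $\langle\tau_1+\tau_2,\tau_1+\tau_3\rangle\subset A_2$; dividing the $16$ elements of $A_{2,\tau_1+\tau_2+\tau_3}$ by this free action yields the $4$ distinct points of the lemma. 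Part~(iii) follows at once: for four pairwise distinct $\tau_i$, the same analysis applied to triples $(\tau_1,\tau_2,\tau_3)$ and $(\tau_1,\tau_2,\tau_4)$ would simultaneously require $2a=\tau_1+\tau_2+\tau_3$ and $2a=\tau_1+\tau_2+\tau_4$, forcing $\tau_3=\tau_4$, a contradiction.

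I expect the main obstacle to lie in part (i), where the set-theoretic analysis on $A^{(4)}_0$ is easy, but promoting it to equality of strict transforms in $K$ demands a careful local examination along the exceptional divisors of $\nu$ over the singular points of $W_{\tau_1}\cap W_{\tau_2}$, most delicately at $4$-tuples with only $2$ distinct support points. Lemma~\ref{lem:GeometricDescription} and Proposition~\ref{lem:local} provide the needed local models, so the verification should reduce to a finite combinatorial check inside a Hilbert scheme of points on the blow-up of a surface double point.
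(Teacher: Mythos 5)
Your proposal is correct and takes essentially the same route as the paper: both reduce each statement to an enumeration of the admissible pairings of the support of a point of $A^{(4)}_0$ and then pass to strict transforms. The only divergences are minor — you count the four points in (ii) via the free action of $\langle\tau_1+\tau_2,\tau_1+\tau_3\rangle$ on $A_{2,\tau_1+\tau_2+\tau_3}$ where the paper uses transitivity of the $G$-action, and you deduce (iii) directly from the incompatible conditions $2a=\tau_1+\tau_2+\tau_3$ and $2a=\tau_1+\tau_2+\tau_4$ where the paper argues via a freely acting involution on the triple intersection; note also that the paper is even terser than you on promoting $W_{\tau_1}\cap W_{\tau_2}=V_{\tau_1+\tau_2,\tau_1}$ to the equality of strict transforms, the step you rightly flag as requiring the local models of Lemma~\ref{lem:GeometricDescription}.
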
 
\begin{proof}
	$(i)$. A point $\xi\in A_0^{(4)}$ belongs to ${W_{\tau_1}}\cap {W_{\tau_2}}$ if and only if it can be written as $\xi=(a,b,-a+\tau_1, -b+\tau_1)$ as well as $\xi=(a', b' , -a'+\tau_2, -b'+\tau_2)$, for some $a,b,a',b'$ in~$A$. Then, we may assume that $a'=a$; since $\tau_1\neq \tau_2$, it follows that $-a+\tau_1 \neq -a'+\tau_2$. This forces either $b' =-a+\tau_1$ or $-b'+\tau_2= -a +\tau_1$; in both cases, we can write
	\[\xi = (a, -a+\tau_1, -a+\tau_2, a+\tau_1+\tau_2),\] 
	i.\ e.\ $\xi\in V_{\tau_1+\tau_2, \tau_1}$. Thus ${W_{\tau_1}}\cap {W_{\tau_2}}=V_{\tau_1+\tau_2, \tau_1}$, and hence for their strict transforms we have $\overline{W_{\tau_1}}\cap \overline{W_{\tau_2}}=\overline{V_{\tau_1+\tau_2, \tau_1}}$.
	
	$(ii)$.
	By $(i)$, a point $\xi$ lies in the intersection $W_{\tau_1}\cap W_{\tau_2}\cap W_{\tau_2}$ if and only if it can be written as $\xi = (a, a+\tau_1+\tau_2, -a+\tau_1, -a+\tau_2)$ as well as $\xi=(a'',b'', -a''+\tau_3, -b''+\tau_3)$. Again, we may assume $a''= a$. Then we must have $-a+\tau_3= a+\tau_1+\tau_2$; equivalently, $a\in A_{2,\tau_1+\tau_2+\tau_3}$. Moreover, either we have $b''=-a+\tau_1$ or $b''=-a+\tau_2$. Note that, since $a\in A_{2, \tau_1+\tau_2+\tau_3}$, in the first case $-b''+\tau_3= -a+\tau_2$, while in the second case $-b''+\tau_3= - a+\tau_1$. 
	This shows that 
	$$W_{\tau_1}\cap W_{\tau_2}\cap W_{\tau_3} = \{ (a, a+\tau_1+\tau_2, -a+\tau_1, -a+\tau_2) \in A_0^{(4)}, \ a\in A_{2, \tau_1+\tau_2+\tau_3}\}. $$
	This set consists of $4$ distinct points. Indeed, $G$ acts transitively on it and the subgroup of $G$ generated by the $(\tau_i, -1)$, $i=1,2,3$ acts trivially. Moreover, it is easy to see that if $g\in G$ is not an element of this subgroup then it has no fixed points in the above set.
	Hence, the intersection $W_{\tau_1}\cap W_{\tau_2}\cap W_{\tau_3}$ is in bijection with $(\ZZ/2\ZZ)^2$. No point of this set belongs to the exceptional locus of the Hilbert-Chow morphism, so that $\overline{W_{\tau_1}}\cap \overline{W_{\tau_2}}\cap \overline{W_{\tau_3}}$ consists of $4$ distinct points.
	
	$(iii)$. Note that $W_{\tau_1}\cap W_{\tau_2}\cap W_{\tau_3}\cap W_{\tau_1+\tau_2+\tau_3}$ is empty. We have seen above that if $\theta\in A_2\setminus \{\tau_1, \tau_2, \tau_3, \tau_1+\tau_2+\tau_3\}$ the action of $(\theta, -1)$ on $W_{\tau_1}\cap W_{\tau_2}\cap W_{\tau_3}$ is free; as ${W_{\theta}}$ is clearly fixed by $(\theta, -1)$, it follows that $W_{\tau_1}\cap W_{\tau_2}\cap W_{\tau_3}\cap W_{\theta} =\emptyset$. 
\end{proof}
	
	\subsection{Fixed loci}
	We calculate the fixed locus of the automorphisms $g\in G$. Our results confirm those of Oguiso \cite{oguiso2020no} and Kamenova-Mongardi-Oblomkov \cite{KMO}. 
	
	\begin{proposition}\label{prop:FixedLoci}
		Let $g\neq 1\in G$. Then:
		\begin{itemize}
			\item if $g=(\tau, 1)$, with $\tau\neq 0\in A_2$, the fixed locus $K^g$ is the disjoint union of the $8$ $\mathrm{K}3$ surfaces $\overline{V_{\tau, \theta}}$, for $\theta$ varying in $A_2$; 
			\item if $g=(\tau, -1)$ with $\tau \in A_2$, the fixed locus $K^g$ is the union of the $\mathrm{K}3^{[2]}$-type variety $\overline{W_{\tau}}$ and $140$ isolated points. The isolated points are given by the set
			\[
			\ \ \ \{(\epsilon_1, \epsilon_2, \epsilon_3, \epsilon_4) , \ \epsilon_i \in A_{2,\tau} \text{ pairwise distinct and such that } \sum_{i=1}^4 \epsilon_i =0 \in A \}.
			\]
		\end{itemize}
	\end{proposition}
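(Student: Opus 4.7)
The approach is, via the deformation invariance of $\Aut_0(K)$ and its fixed loci already exploited in Lemma \ref{lem:G}, to reduce to the case $K=K^3(A)$, then to describe the fixed locus of $g$ on the base $A^{(4)}_0$ and lift it through the Hilbert-Chow morphism $\nu\colon K\to A^{(4)}_0$. Lemma \ref{lem:GeometricDescription} will identify the resulting loci with $\overline{V_{\tau,\theta}}$ or $\overline{W_\tau}$, and Proposition \ref{lem:local} will handle the non-reduced fibres of $\nu$ near the fixed points of the involution on~$A$.

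For $g=(\tau,1)$ with $0\neq\tau\in A_2$: translation by $\tau$ acts freely on $A$, so any $g$-invariant length-four multiset must be a union of $\langle\tau\rangle$-orbits, i.\,e.\ $\nu(\xi)=(a,a+\tau,b,b+\tau)$; the constraint $\nu(\xi)\in A^{(4)}_0$ is $2(a+b)=0$, so $\theta\coloneqq a+b\in A_2$ and $\nu(\xi)\in V_{\tau,\theta}$, and conversely every $V_{\tau,\theta}$ is pointwise $g$-fixed. Over the reduced locus $\nu$ is an isomorphism, so this gives a point of $\overline{V_{\tau,\theta}}$. The only non-reduced stratum is $(a,a+\tau,a,a+\tau)$ with $a\in A_4$, over which the fibre of $\nu$ is $\PP(T_a A)\times\PP(T_{a+\tau}A)$; translation by $\tau$ has identity derivative, so $g$ acts as the swap and fixes the diagonal $\PP^1$. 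By Lemma \ref{lem:GeometricDescription}(i) this diagonal is precisely the image under $\phi_{\tau,2a}$ of the exceptional divisor of $\Km^{2a}(A/\langle\tau\rangle)$ over the node $a$, so it lies in $\overline{V_{\tau,2a}}$. Hence $K^g=\bigcup_{\theta\in A_2}\overline{V_{\tau,\theta}}$; Remark \ref{rmk:DescriptionRelevantLoci}(ii) yields $8$ distinct components, and a direct multiset comparison (together with the uniqueness of $\theta$ at each non-reduced fibre) shows they are pairwise disjoint.

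For $g=(\tau,-1)$ with $\tau\in A_2$: the involution $a\mapsto -a+\tau$ on $A$ has fixed set $A_{2,\tau}$ of sixteen points, and a $g$-invariant length-four multiset splits into $g$-orbits of sizes $1$ and $2$. The reduced $(2,2)$-stratum is $W_\tau$, which lifts to $\overline{W_\tau}$ via Lemma \ref{lem:GeometricDescription}(ii). The reduced $(2,1,1)$-stratum is empty on $A^{(4)}_0$, since $\epsilon_1+\epsilon_2=\tau$ with $\epsilon_i\in A_{2,\tau}$ forces $\epsilon_1=\epsilon_2$. The reduced $(1,1,1,1)$-stratum consists of four-element subsets of $A_{2,\tau}$ whose sum in $A$ is zero; writing $A_{2,\tau}=\epsilon_0+A_2$ this becomes four-element subsets of $A_2\cong\FF_2^4$ of vanishing sum, counted by Fourier inversion on $\FF_2^4$ as
\[
\frac{1}{16}\Bigl(\binom{16}{4}+15\cdot [z^4](1-z^2)^8\Bigr)=\frac{1820+15\cdot 28}{16}=140,
\]
each yielding a single isolated fixed point. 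The remaining non-reduced strata---$2[\epsilon]+[b]+[-b+\tau]$ with $\epsilon\in A_{2,\tau}$, $b\notin A_{2,\tau}$; $2[a]+2[-a+\tau]$ with $a\notin A_{2,\tau}$; $2[\epsilon_1]+2[\epsilon_2]$ with distinct $\epsilon_i\in A_{2,\tau}$; and $4[\epsilon]$ with $\epsilon\in A_{2,\tau}$---all lie in the closure $W_\tau\subset A^{(4)}_0$, and for each of them Proposition \ref{lem:local}, applied locally at every $\epsilon\in A_{2,\tau}$, together with the description of $\overline{W_\tau}$ in Lemma \ref{lem:GeometricDescription}(ii), shows that the $g$-fixed part of the exceptional fibre of $\nu$ is already contained in $\overline{W_\tau}$.

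The main obstacle is this last point: ruling out spurious lower-dimensional fixed components in the involution case. Smoothness of $K^g$ together with its symplectic nature confines components to dimensions $\{0,2,4\}$, yet the non-reduced strata naively produce extra two-dimensional fixed loci in the exceptional fibres of $\nu$. The decisive input is Proposition \ref{lem:local}: the $\iota^{[2n]}$-fixed locus of $(\CC^2)^{[2n]}$ is exactly $(\mathrm{Bl}_0(\CC^2/\iota))^{[n]}$, and this is precisely the local model of $\overline{W_\tau}$ furnished by Lemma \ref{lem:GeometricDescription}(ii). Consequently every exceptional contribution to $K^g$ is absorbed into $\overline{W_\tau}$, leaving only the $140$ reduced isolated points.
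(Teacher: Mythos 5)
Your proposal is correct and follows essentially the same route as the paper: reduce to the generalized Kummer variety, classify the $g$-invariant cycles in $A^{(4)}_0$ by orbit type, and use Lemma \ref{lem:GeometricDescription} together with Proposition \ref{lem:local} to show that the fixed part of each exceptional fibre of $\nu$ is absorbed into the relevant $\overline{V_{\tau,\theta}}$ or $\overline{W_{\tau}}$. The only cosmetic difference is the count of the $140$ isolated points, which the paper obtains by enumerating ordered quadruples in $A_{2,\tau}$ rather than by a character sum on $\FF_2^4$.
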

	\begin{proof} 
		Let $g=(\tau, 1)$ with $\tau\neq 0\in A_2$. The action of $g$ on $\xi=(a,b,c,d)\in A^{(4)}$ is given by $\xi\mapsto (a+\tau, b+\tau, c+\tau, d+\tau)$. We see immediately that the surfaces $V_{\tau, \theta}$ are fixed by $g$. Conversely, if $\xi $ is fixed and $a$ belongs to its support, then also $a+\tau$ must be in the support.
		Therefore, $\xi=(a, a+\tau, b, b+\tau)$; if $\xi\in A^{(4)}_0$, then $2a+2b=0$, so that $\theta\coloneqq a+b$ belongs to $A_2$ and $\xi\in V_{\tau, \theta}$. 
		By the description of $\overline{V_{\tau, \theta}}$ given in Lemma~\ref{lem:GeometricDescription}.$(i)$, the fixed locus of $g$ in $\nu^{-1}(V_{\tau, \theta})$ is precisely $\overline{V_{\tau, \theta}}$. 
		Since the fixed locus is smooth, $K^g$ is the disjoint union of the $8$ K$3$ surfaces $\overline{V_{\tau, \theta}}$, for $\theta\in A_2$. 
		
		Assume now that $g=(\tau, -1)$. If $\xi=(a,b,c,d) \in A^{(4)}$, the action of $g$ is given by $\xi\mapsto (-a+\tau, -b+\tau, -c+\tau, -d+\tau)$. Note that $W_{\tau}$ is contained in the fixed locus of $g$ on $A^{(4)}_0$. Moreover any $\xi$ entirely supported at points of $A_{2,\tau}$ is fixed by~$g$. 
		Conversely, let $\xi\in A_0^{(4)}$ be fixed. Then we have the following possibilities.
		If the support of $\xi$ does not intersect $A_{2, \tau}$, we have $\xi= (a, -a+\tau, b, -b+\tau)$ for some $a,b\in A\setminus A_{2, \tau}$ and $\xi $ belongs to $ W_{\tau}$. 
		If the support of $\xi$ contains some $\epsilon\in A_{2,\tau}$ and some $a\notin A_{2,\tau}$, then $\xi= (a, -a+\tau, \epsilon, b)$ for some $b\in A$; as $\xi\in A_0^{(4)}$, we must have $b=\epsilon$, and hence $\xi$ belongs to $ W_{\tau}$. 
		Finally, assume that $\xi=( \epsilon_1, \epsilon_2, \epsilon_3, \epsilon_4)$ is entirely supported at $A_{2,\tau}$. As $\xi$ belongs to $A^{(4)}_0$ we have $\sum_i \epsilon_i =0 $ in $A$.
		If the support of $\xi$ consists of $4$ distinct points, then $\xi\notin W_{\tau}$. Otherwise $\xi=(\epsilon_1, \epsilon_1, \epsilon_2, \epsilon_2)$ lies in $W_{\tau}$. 
		
		We conclude that the fixed locus of $g$ acting on $A^{(4)}_0$ consists of $W_{\tau}$ and the isolated points in the set 	
		\[
		\{(\epsilon_1, \epsilon_2, \epsilon_3, \epsilon_4) , \ \epsilon_i \in A_{2,\tau} \text{ pairwise distinct and such that } \sum_{i=1}^4 \epsilon_i =0 \in A \}.
		\]
		There are $140$ isolated fixed points. Indeed, the total number of ordered sequences $(\epsilon_1, \epsilon_2, \epsilon_3, \epsilon_4)$ such that $\sum_i \epsilon_i = 0 $ is $16^3$. There are $16$ such sequences with support a single point, and $6\binom{16}{2}=16 \cdot 45 $ with support two distinct points. The number of ordered sequences as above supported on four distinct point is $16^3-16-16\cdot 45=16\cdot 210$; under the symmetric group, each of them has an orbit of cardinality~$24$. Hence the number of isolated fixed points is $ \frac{210\cdot 16}{24} = 140$.
		
		Therefore, $K^g$ consists of $140$ isolated fixed points and the fixed locus of $g$ acting on $\nu^{-1}(W_\tau)$. To conclude we have to check that the latter coincides with $\overline{W_{\tau}}$. 
		Using the description of $\overline{W_{\tau}}$ given in the proof of Lemma \ref{lem:GeometricDescription}.$(ii)$ and the second assertion of Proposition~\ref{lem:local} it is readily seen that $\nu^{-1}(\xi) \cap K^g = \nu^{-1}(\xi) \cap \overline{W_{\tau}}$ for any  $\xi\in W_{\tau}$. 
		\end{proof}
		\begin{remark}\label{rmk:action}
			Let $h=(\epsilon, \pm 1)$ be an automorphism in $\Aut_0(K^3(A))=A_4\rtimes \langle -1\rangle$. One can see directly from Definition \ref{def:RelevantLoci} that $h$ maps $\overline{W_{\tau}} $ to $\overline{W_{\tau+2\epsilon}}$, for any $\tau\in A_2$.
		\end{remark}
		
	We can finally give the proof of the main result of this section. 
		
		\begin{proof}[Proof of Theorem \ref{thm:fixedLoci}]
			Let us first assume that $K$ is the generalized Kummer variety associated to an abelian surface $A$. 
			Let $g= (\tau, 1)\in G$, with $\tau\neq 0\in A_2$. Consider a component $\overline{V_{\tau, \theta}}$ of $K^g$. The surface $V_{\tau, \theta}$ is clearly contained in $W_{\theta}$, and hence the same holds for their strict transforms. 
			Let now $g= (\tau, -1)\in G$. Let $P$ be an isolated fixed point in $K^{g}$. Then we have $P=(\epsilon_1, \epsilon_2, \epsilon_3, \epsilon_4)$ for some pairwise distinct $\epsilon_i \in A_{2,\tau}$ summing up to $0$ in $A$. The last condition implies that we can write $P=(\epsilon_1, \epsilon_2, \epsilon_1+\theta, \epsilon_2+\theta)$ for some $0\neq \theta\in A_{2}$. This is the same as 
			$$ P = (\epsilon_1, \epsilon_2, -\epsilon_1+\theta+\tau, -\epsilon_2+\theta+\tau),$$
			which lies in $\overline{W_{\tau+\theta}}$. By Lemma \ref{prop:FixedLoci} we conclude that for a generalized Kummer variety the union of the fixed loci $K^g$ consists of the $16$ varieties $\overline{W_{\tau}}$ of $\mathrm{K}3^{[2]}$-type. The intersections of these components were computed in Lemma \ref{lem:intersections}.
			
			Let now $f\colon \mathcal{K}\to B$ be a smooth and proper family of $\mathrm{Kum}^3$-manifolds over a connected base $B$, with a fibre $\mathcal{K}_0$ isomorphic to the generalized Kummer variety on an abelian surface $A$.
			By the result of Hassett-Tschinkel \cite[Theorem 2.1]{hassettTschinkel}, up to a finite \'etale base-change, there is a fibrewise action of $G=A_2\times\langle -1 \rangle$ on $\mathcal{K}$. By \cite[Lemma 3.10]{fujiki1983}, this action is locally trivial with resepect to $f$, i.\ e.\ any $x\in \mathcal{K}$ has a neighborhood $U_x=(f^{-1}f(U_x)\cap \mathcal{K}_{f(x)} )\times f(U_x)$ such that the action on $U_x$ of the stabilizer subgroup of $x$ is induced by that on the fibre $\mathcal{K}_{f(x)}$. 
			Therefore, for any $\tau\in A_{2}$ the fixed locus $\mathcal{K}^g$ of $g=(\tau, -1)$ contains a smooth family $\overline{\mathcal{W}_{\tau}}\to B$ of manifolds of $\mathrm{K}3^{[2]}$-type as the unique component of~$\mathcal{K}^g$ with positive dimensional intersection with the fibres of $f$. 
			We conclude that for any~$b\in B$ we have $$\bigcup_{1\neq g\in G} (\mathcal{K}_b)^g = \bigcup_{\tau\in A_2} \overline{\mathcal{W}_{\tau}}_{\lvert_{\mathcal{K}_b}},$$
			and that the components $\overline{\mathcal{W}_{\tau}}_{\lvert_{\mathcal{K}_b}}$ intersect as claimed.
			\end{proof}

	\section{A Lagrangian fibration}\label{sec:specialCase}
	
	 Throughout this section, $J$ denotes a general principally polarized abelian surface.
	 We fix a symmetric theta divisor $\Theta\subset J$, which is unique up to translation by a point of $J_2$. For any integer $d$, we consider the Mukai vector $v_d\coloneqq (0, 2\Theta, d-4)$ on~$J$ and the moduli space $M_J(v_d)$ of $\Theta$-semistable sheaves with Mukai vector $v_d$ (\cite{huybrechts2010geometry}).
	 The Albanese fibration
	 \[
	 M_J(v_d) \xrightarrow{ \ \mathrm{alb} \ } \Pic^0(J)\times J,
	 \]
	 is isotrivial \cite{Yos01}. We denote the fibre of $\mathrm{alb}$ over $(\mathcal{O}_J, 0 )$ by $K_J(v_d)$. For $d$ even, $K_J(v_d)$ is singular and admits a crepant resolution of $\mathrm{OG}6$-type, by \cite{O'G03} and \cite{LS06}.
	 When $d$ is odd, $K_J(v_d)$ is smooth, and it is a hyper-K\"{a}hler sixfold of $\mathrm{Kum}^3$-type \cite{Yos01}. In this case, we consider the group $G \subset \Aut_0(K_{J}(v_d))$ introduced in Definition \ref{def:G}. 
	
	The main result of this section is the following. 
	
	\begin{theorem}\label{thm:main}
		The quotient $K_J(v_3)/ G$ is birational to $\Km(J)^{[3]}$.
	\end{theorem}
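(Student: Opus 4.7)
The plan is to produce a $G$-invariant dominant rational map
\[
\Psi\colon K_J(v_3) \dashrightarrow M_{\Km(J)}(w_3)
\]
of generic degree $|G|=2^5$, where $M_{\Km(J)}(w_3)$ is a Beauville--Mukai moduli space of stable sheaves on the Kummer K3 surface $\Km(J)$ whose Mukai vector $w_3$ satisfies $w_3^2=4$. Such a moduli space is a six-dimensional hyper-K\"ahler variety of $\mathrm{K}3^{[3]}$-type and, by the general theory of moduli of sheaves on K3 surfaces, is birational to $\Km(J)^{[3]}$. Granting the existence of such a $\Psi$, the induced map $K_J(v_3)/G \dashrightarrow M_{\Km(J)}(w_3)$ is generically injective, hence birational, proving the theorem.

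To construct $\Psi$, I would exploit the Beauville--Mukai Lagrangian structure $K_J(v_3)\to |2\Theta|=\PP^3$ sending a sheaf to its Fitting support. For a general principally polarized abelian surface, every section in $H^0(J,2\Theta)$ is $[-1]^*$-invariant (being a theta function of the second order), so every curve $C\in |2\Theta|$ is stable under $[-1]$. For a generic such $C$ (avoiding the $16$ two-torsion points of $J$), the quotient $\pi_C\colon C\to \overline{C}:=C/\langle -1\rangle$ is an \'etale double cover onto a smooth genus~$3$ curve, which embeds into $\Km(J)$. The norm map $\mathrm{Nm}_C\colon \Pic^3(C)\to \Pic^3(\overline{C})$ attached to $\pi_C$ globalizes to the desired rational map $\Psi$, with values in a Beauville--Mukai moduli space on $\Km(J)$; a Chern-class pushforward computation determines $w_3=(0,H,s)$ with $H=[\overline{C}]$, and verifies $H^2=4$.

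For the $G$-invariance and the degree count, I would argue on the factors of $G=J_2\rtimes \langle -1\rangle$ separately. The involution $[-1]\in G$ is the Galois involution of $\pi_C$ on each Lagrangian fiber, and the norm is insensitive to it: $\mathrm{Nm}_C([-1]^*L)=\mathrm{Nm}_C(L)$; the restriction of $\mathrm{Nm}_C$ to $K(C)\subset \Pic^3(C)$ is an isogeny of three-dimensional abelian varieties whose Prym kernel accounts for the $\langle -1\rangle$-factor. The translations $t_\tau$ by $\tau\in J_2$ commute with $[-1]$ and descend to a group of symplectic Kummer involutions of $\Km(J)$; the $J_2$-orbits in $|2\Theta|$ collapse to single points of the quotient $|2\Theta|/J_2$, which gets identified with the Lagrangian base $|H|$ of $M_{\Km(J)}(w_3)$, giving $J_2$-invariance of $\Psi$. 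Combining, $\deg \Psi=|J_2|\cdot 2=16\cdot 2=32=|G|$, so $\Psi$ descends to a birational map $K_J(v_3)/G\dashrightarrow M_{\Km(J)}(w_3)$, and thus to $\Km(J)^{[3]}$.

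The main obstacle is the precise determination of the target, and in particular the verification that the class $H$ carved out by the quotient curves has $H^2=4$ and is primitive: this depends on how the classes $[\overline{C}]$ behave under the quotient by $J_2$ and on the strict-transform behavior at the $(-2)$-curves of $\Km(J)$, and requires a careful analysis of the Heisenberg-type action of the theta group on $|2\Theta|$ together with the induced Kummer involutions on $\Km(J)$. The fiberwise degree calculation, hinging on understanding how the Prym variety of $\pi_C$ sits inside $\Pic^3(C)$ and meets the Albanese zero locus $K(C)$, together with the extension of $\Psi$ across curves $C$ meeting the two-torsion locus and the verification of stability of the norm sheaves with respect to a $w_3$-generic polarization, form the technical core of the proof.
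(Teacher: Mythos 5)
Your overall strategy coincides with the paper's: both construct the relative norm map of the \'etale double cover $\pi\colon\tilde{\mathcal C}\to\mathcal C$, show it restricts to a dominant $G$-invariant rational map $K_J(v_3)\dashrightarrow M_{\Km(J)}(w_3)$ of degree $2^5=|G|$, and conclude by Beauville's result that $\overline{\Pic}^3(\mathcal D/\PP^3)$ is birational to $\Km(J)^{[3]}$. However, two of your supporting claims are wrong, and they are not merely technical. First, you misidentify $G$. In the Beauville--Mukai model the subgroup of $\Aut_0(K_J(v_3))$ generated by automorphisms fixing a fourfold is $\Pic^0(J)_2\times\langle -1\rangle$, acting by $F\mapsto F\otimes L$ and $F\mapsto (-1)^*F$ (Proposition \ref{prop:identifyG}); every element of $G$ preserves the fibres of the support fibration. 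The translations $t_\tau^*$ for $\tau\in J_2$ that you take as the $J_2$-factor of $G$ act \emph{nontrivially} on the base $|2\Theta|=\PP^3$ (via the Heisenberg involutions) and lie in $\Aut_0\setminus G$: by Matsushita's theorem an automorphism moving the Lagrangian fibres cannot fix a $\mathrm{K}3^{[2]}$-type fourfold, so they do not belong to the group of Definition \ref{def:G}. Consequently your claim that the Lagrangian base of $M_{\Km(J)}(w_3)$ is the quotient $|2\Theta|/J_2$ is false --- the norm map covers the \emph{identity} on $\PP^3$, since $|H|$ is naturally identified with $|2\Theta|$ itself --- and the norm map is in fact \emph{not} invariant under $t_\tau^*$ (it sends a sheaf on $\tilde{\mathcal C}_b$ to a sheaf on $\mathcal D_b$, while $t_\tau^*$ moves $b$).

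Second, your degree count $16\times 2$ is wrong in both factors: the correct decomposition is $1$ on the base times $2^5$ on the fibres. The restriction of $\mathrm{Nm}_{\pi_b}$ to the three-dimensional fibre $K_J(v_3)_b\subset\Pic^3(\tilde{\mathcal C}_b)$ is generically finite of degree $2^5$, not an isogeny of degree $2$ (the paper obtains this from $\pi^*\circ\mathrm{Nm}_\pi=[2]$, which has degree $2^6$ on an abelian threefold, together with $\deg\pi^*=2$). The $G$-invariance you need is then: invariance under $(-1)^*$ (clear from the description of the norm on divisors), and invariance under $\otimes L$ for $L\in\Pic^0(J)_2$, which holds precisely because the constant abelian scheme $j^*(\Pic^0(J)\times B)$ sits inside the relative Prym variety of $\pi$ (Verra's theorem that $P(\pi_b)\cong J$), so the norm annihilates the corresponding $2$-torsion sections. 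You invoke the Prym variety but assign it the wrong role (a degree-$2$ contribution for the $\langle-1\rangle$-factor); as it stands, your argument would not descend to a well-defined, let alone birational, map on $K_J(v_3)/G$.
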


We will in fact show that for any odd $d$ the quotient $K_J(v_d)/G$ is birational to a variety of $\mathrm{K}3^{[3]}$-type. Before proceeding with the proof we need to fix some notation.

\subsection{Preliminaries}\label{subsec:mapsdoublecover} 
Let $C$ be a smooth projective curve of genus $g$. Its Picard variety $\Pic^{\bullet}(C)=\bigsqcup_d \Pic^d(C)$ is the group of line bundles modulo isomorphism. Tensor product gives multiplication maps $m\colon \Pic^{a}(C) \times \Pic^{b}(C) \to \Pic^{a+b}(C)$. We denote by $[n]\colon \Pic^d(C)\to \Pic^{nd}(C)$ the $n$-th power map.
The Jacobian of $C$ is the abelian variety $\Pic^0(C)$; for $d\neq 0$, the variety $\Pic^d(C)$ is a torsor under $\Pic^0(C)$. 

Assume now that $\pi\colon \tilde{C}\to C$ is an \'etale double cover, so that $\tilde{C}$ is of genus $2g-1$. In this situation, for each integer $d$, we have:
\begin{itemize}
	\item the pull-back map $\pi^*\colon \Pic^{d}(C)\to \Pic^{2d}(\tilde{C})$;
	\item the norm map $\mathrm{Nm}_\pi\colon\Pic^d(\tilde{C})\to \Pic^d({C})$;
	\item the covering involution $\sigma\colon \tilde{C}\to \tilde{C}$, inducing $\sigma^*\colon \Pic^{d}(\tilde{C})\to \Pic^d(\tilde{C})$.
\end{itemize}
The composition $\mathrm{Nm}_\pi \circ\pi^*\colon \Pic^d(C)\to \Pic^{2d}(C)$ coincides with multiplication by $2$, while $\pi^*\circ \mathrm{Nm}_\pi\colon \Pic^d(\tilde{C})\to \Pic^{2d}(\tilde{C})$ is given by $L\mapsto L\otimes \sigma^*(L)$.

We refer to Mumford's paper \cite{mumford1974} for the following.
The \'etale double cover $\pi$ is uniquely determined by a $2$-torsion line bundle $\eta$ on $C$, which is the only non-trivial element in $\ker (\pi^*\colon \Pic^{0}(C)\to \Pic^0(\tilde{C}))$.
Moreover, the image of the pull-back map~$\pi^*\colon \Pic^{d}(C)\to \Pic^{2d}(\tilde{C})$ is precisely the fixed locus of~$\sigma^*$. 
The involution $\sigma^*$ acts as the inverse $L\mapsto L^{\vee}$ on the kernel of the norm map $\mathrm{Nm}_\pi\colon \Pic^0(\tilde{C})\to \Pic^0(C)$. 
This kernel has two connected components; the one containing the neutral element is the Prym variety $ P(\pi)$ of the cover, an abelian variety of dimension $g-1$.
The fibres of $\mathrm{Nm}_\pi\colon \Pic^d(\tilde{C})\to \Pic^d(C)$ are torsors under $P(\pi)\times\ZZ/2\ZZ$.

\subsection{Geometric set-up} \label{subsec:setup}
Let $\Theta \subset J$ be the general principally polarized abelian surface of Picard rank~$1$, with $\Theta$ a symmetric theta divisor. By Riemann-Roch $H^0(J, \mathcal{O}_J(2\Theta))=4$; we will identify the complete linear system $\lvert 2\Theta\rvert$ with $\PP^3$.
It is classically known that this linear system is base-point free and induces an embedding of $J/\pm 1$ into $\PP^{3,\vee}$ as a quartic surface with $16$ nodes. Blowing-up the nodes of this quartic surface one obtains the Kummer K3 surface $\Km(J)$ associated to $J$. We denote by~$H$ the divisor on $\Km(J)$ obtained as pull-back of a hyperplane section of $J/\pm 1\subset \PP^{3,\vee}$. 
The linear system $|H|$ is naturally identified with $\lvert 2\Theta\rvert=\PP^3$. 

Consider the universal family of genus $5$ curves $\tilde{\mathcal{C}}\to \lvert 2\Theta\rvert$. The involution $-1$ of~$J$ acts trivially on $\lvert 2\Theta\rvert$, and we consider the quotient family $\mathcal{C}\to \PP^3$, which is identified with the hyperplane linear system on $J/\pm 1\subset \PP^{3,\vee}$. There is a degree $2$ morphism
$
\pi\colon \tilde{\mathcal{C}}\to \mathcal{C}
$
over $\PP^3$. We denote by $\mathcal{D}\to \lvert H\rvert $ the universal family of curves over the linear system~$\lvert H\rvert $ on~$\Km(J)$; clearly, $\mathcal{C}_b=\mathcal{D}_b$ if and only if $\mathcal{C}_b$ does not pass through any of the nodes of $J/\pm 1$.

	A detailed study of the linear system $\lvert 2\Theta\rvert$ and the map $\pi\colon \tilde{\mathcal{C}}\to \mathcal{C}$ can be found in Verra's article \cite{verra}. In particular, he shows that, for any $b\in \PP^3$, the curve $\tilde{\mathcal{C}}_b$ is smooth if and only if $\mathcal{C}_b$ is so, and in this case the Prym variety of the \'{e}tale double cover $\pi_b \colon \tilde{\mathcal{C}}_b \to \mathcal{C}_b$ is the abelian surface $J$. 
	Moreover $\mathcal{C}_b$ is smooth if and only if it does not contain any node of $J/\pm 1$ and it does not lie on a tangent hyperplane.

\subsection{Beauville-Mukai systems}
Taking the Jacobian of the smooth curves in the families introduced above yields families of abelian varieties over a Zariski open subvariety of $\PP^3$. The total space of these families can be compactified considering suitable moduli spaces of stable sheaves on $J$ and $\mathrm{Km}(J)$. The construction leads to certain hyper-K\"{a}hler varieties equipped with Lagrangians fibrations, called Beauville-Mukai systems~\cite{beauville1991}, \cite{Muk84}.

Consider the Mukai vector $v_d=(0, 2\Theta, d-4)$ on $J$, for an integer $d$. The moduli space $M_J(v_d)$ parametrizes pure dimension~$1$ sheaves on $J$ which are push-forward of semistable and torsion free sheaves of rank~$1$ and degree $d$ supported on curves algebraically equivalent to $2\Theta$. Mapping a sheaf to its support \cite{lepotier} gives a morphism
\[
M_J(v_d) \xrightarrow{\ \mathrm{supp} \ } \Pic^0(J)\times \PP^3,
\]
We define the degree $d$ relative compactified Jacobian as
\[
\overline{\Pic}^d(\tilde{\mathcal{C}}/ \PP^3) \coloneqq \mathrm{supp}^{-1}(\{ \mathcal{O}_J \} \times \PP^3).
\]
If $b\in \PP^3$ corresponds to a smooth curve in $|2\Theta|$, the fibre over $b$ of the support morphism $\mathrm{supp}\colon \overline{\Pic}^d(\tilde{\mathcal{C}}/\PP^3) \to \PP^3$ is the degree~$d$ Picard variety of the curve.
The Albanese morphism $\mathrm{alb}\colon \overline{\Pic}^d(\tilde{\mathcal{C}}/\PP^3) \to J$ is an isotrivial fibration with fibre $K_J(v_d)$, and it maps a sheaf $F\in K_J(v_d)$ to the sum $\sum c_2(F)\in J$, see \cite[\S6]{Wie18}. 

A similar construction can be done for the family of curves $\mathcal{D}$ on $\Km(J)$.
Given an integer $d$, we consider the Mukai vector $w_d= (0, H, d-2)$ on $\Km(J)$. The degree~$d$ relative compactified Jacobian of curves in $|H|$ is the moduli space
\[
\overline{\Pic}^d(\mathcal{D}/ \PP^3) \coloneqq M_{\Km(J)}(w_d),
\]
of semistable sheaves with Mukai vector $w_d$, with respect to a fixed $w_d$-generic polarization. It is a hyper-K\"{a}hler variety of $\mathrm{K}3^{[3]}$-type by \cite{O'G97}, \cite{yoshioka99}. The moduli space $M_{\Km(J)}(w_d)$ parametrizes pure dimension $1$ sheaves on $\Km(J)$ which are pushforward of torsion free sheaves of rank $1$ and degree $d$ on curves in the linear system~$\lvert H \rvert$. There is a morphism $\mathrm{supp} \colon \overline{\Pic}^d(\mathcal{D}/ \PP^3)\to \PP^3$ which is a Lagrangian fibration, whose general fibres are the degree $d$ Picard varieties of the smooth curves in $\lvert H \rvert$.

\subsection{The relative norm map} 
We denote by $B\subset \PP^3$ the locus parametrizing smooth curves. The maps introduced in \S\ref{subsec:mapsdoublecover} give morphisms over $B$. Hence, we obtain rational maps
\begin{alignat*}{2}
\mathrm{Nm}_\pi & \colon \overline{\Pic}^d(\tilde{\mathcal{C}}/\PP^3)  && \dashrightarrow \overline{\Pic}^{d}({\mathcal{D}}/\PP^3);\\
\pi^* & \colon \overline{\Pic}^d(\mathcal{D}/\PP^3) && \dashrightarrow  \overline{\Pic}^{2d}(\tilde{\mathcal{C}}/\PP^3).
\end{alignat*}
The pull-back $\pi^*$ has been studied by Rapagnetta \cite{rapagnetta2007topological} and by Mongardi-Rapagnetta-Sacc\`a~\cite{MRS18}. They show that it gives a degree $2$ dominant rational map
\[
\overline{\Pic}^d(\mathcal{D}/\PP^3) \dashrightarrow K_J(v_{2d}),
\]
exhibiting a variety of OG6-type as quotient of a variety of $\mathrm{K}3^{[3]}$-type by a birational symplectic involution. 
We consider instead the norm map $\mathrm{Nm}_{\pi}$.

\begin{lemma}\label{lem:restriction}
	For each $d$, the restriction $\mathrm{Nm}_\pi\colon K_J(v_d) \dashrightarrow \overline{\Pic}^{d}({\mathcal{D}}/\PP^3)$ is dominant and generically finite of degree $2^5$.
\end{lemma}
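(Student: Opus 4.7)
The plan is to work fibrewise over the Zariski open $B\subset \PP^3$ of smooth curves. Over $B$, both Lagrangian fibrations $K_J(v_d)\to \PP^3$ and $\overline{\Pic}^d(\mathcal{D}/\PP^3)\to \PP^3$ restrict to smooth proper morphisms of relative dimension~$3$, and $\mathrm{Nm}_\pi$ is a regular morphism compatible with the support maps. It will therefore suffice to show that over a general $b\in B$ the induced map on fibres is surjective and of degree $2^5$: surjectivity on fibres combined with equality of bases gives global dominance, and fibrewise finiteness gives global generic finiteness of the same degree.

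The fibre of $\overline{\Pic}^d(\mathcal{D}/\PP^3)$ over $b$ is tautologically the torsor $\Pic^d(\mathcal{D}_b)$ under the Jacobian of the genus-$3$ curve $\mathcal{D}_b$. For $K_J(v_d)$, the fibre over $b$ is cut out of $\Pic^d(\tilde{\mathcal{C}}_b)$ by the constraint $\mathrm{alb}=(\mathcal{O}_J,0)$; the $\Pic^0(J)$-part is constant along $\mathrm{supp}^{-1}(b)$, so the only remaining condition is the vanishing of the $J$-component of $\mathrm{alb}$. This cuts out a $3$-dimensional subvariety, a torsor under a connected abelian subvariety $K_b\subset \Pic^0(\tilde{\mathcal{C}}_b)$ of dimension $3$.

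The heart of the argument is to identify $K_b$ with $\pi_b^{*}(\Pic^0(\mathcal{D}_b))$ via Prym theory. Since the covering involution $\sigma_b$ of $\pi_b$ is the restriction of $-1\colon J\to J$, for any $L\in \Pic^0(\mathcal{D}_b)$ the pushforward $\iota_{*}\pi_b^{*}L$ to $J$ is $(-1)$-invariant; as $-1$ acts by $-1$ on the $J$-factor of the Albanese, the $J$-component of $\mathrm{alb}(\iota_{*}\pi_b^{*}L)$ must lie in $J[2]$, whence by connectedness $\pi_b^{*}(\Pic^0(\mathcal{D}_b))\subset K_b$. Both sides are connected abelian varieties of dimension $3$, so they coincide. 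On this common image the norm map satisfies $\mathrm{Nm}_\pi\circ \pi_b^{*}=[2]$ on $\Pic^0(\mathcal{D}_b)$. Since $\pi_b^{*}$ has kernel $\{\mathcal{O},\eta_b\}$ of order $2$ and $[2]$ has degree $2^{2g(\mathcal{D}_b)}=2^6$, the restriction $\mathrm{Nm}_\pi|_{K_b}\colon K_b\to \Pic^0(\mathcal{D}_b)$ is an isogeny of degree $2^6/2=2^5$. Transferring this isogeny to torsors yields surjectivity and degree $2^5$ on fibres, proving the lemma.

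The step I expect to require the most care is the fibrewise analysis of $\mathrm{alb}$ on $\Pic^d(\tilde{\mathcal{C}}_b)$: one must verify that the $J$-component factors, modulo torsion, through the natural surjection $\Pic^0(\tilde{\mathcal{C}}_b)\twoheadrightarrow J$ dual to the Prym inclusion, so that its kernel is a connected abelian variety of dimension~$3$. This rests on Yoshioka's explicit description of $\mathrm{alb}$ via $(\det(\cdot),\sum c_2(\cdot))$ together with standard Prym theory for $\pi_b$.
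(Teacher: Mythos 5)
Your argument is correct, but it takes a genuinely different route from the paper's. You work fibrewise over the locus $B$ of smooth curves and identify the fibre of $K_J(v_d)\to\PP^3$ as a torsor under $\pi_b^*\bigl(\Pic^0(\mathcal{D}_b)\bigr)\subset\Pic^0(\tilde{\mathcal{C}}_b)$, after which the degree $2^5=2^6/2$ falls out of $\mathrm{Nm}_{\pi}\circ\pi_b^*=[2]$ on the genus-$3$ Jacobian together with $\ker\pi_b^*=\{\mathcal{O},\eta_b\}$. The paper instead argues globally: it places $\mathrm{Nm}_\pi$ in a commutative square with the multiplication maps $[2]$ (each of degree $2^6$, since they preserve the fibres of $\mathrm{supp}$, which are generically torsors under abelian threefolds), thereby reducing to the even case $v_{2d}$, where it invokes the Rapagnetta and Mongardi--Rapagnetta--Sacc\`a result that $K_J(v_{2d})$ is the closure of the image of $\pi^*$ (so $\sigma^*=\mathrm{id}$ there and $\pi^*\circ\mathrm{Nm}_\pi=[2]$), and divides by $\deg\pi^*=2$. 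Your route is more self-contained and yields the extra fibrewise information that $K_J(v_d)_b$ is a torsor under $\pi_b^*\mathrm{Jac}(\mathcal{D}_b)$; its cost is precisely the step you flag, namely that the kernel of $\mathrm{alb}_J\colon\Pic^0(\tilde{\mathcal{C}}_b)\to J$ is a \emph{connected} abelian threefold (otherwise the degree count could pick up the number of components). This does hold, but the cleanest justification is not the ``modulo torsion'' factorization you sketch (connectedness of a kernel is not an isogeny-invariant notion): either dualize and use that the restriction $\Pic^0(J)\to\Pic^0(\tilde{\mathcal{C}}_b)$ is injective by the Lefschetz theorem for the ample divisor $\tilde{\mathcal{C}}_b\subset J$, or quote connectedness of the fibres of the Lagrangian fibration $\mathrm{supp}\colon K_J(v_d)\to\PP^3$. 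The paper sidesteps this entirely by outsourcing the geometry to the known OG6 picture of $\pi^*$.
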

\begin{proof}
	For each $k$, the $k$-th power map $[k]\colon \overline{\Pic}^d(\tilde{\mathcal{C}}/\PP^3) \dashrightarrow \overline{\Pic}^{kd}(\tilde{\mathcal{C}}/\PP^3)$ restricts to a rational map $[k]\colon  K_J(v_d) \dashrightarrow K_J(v_{kd})$. This can be easily checked at smooth curves $\tilde{\mathcal{C}_b}$ in the family using the description of the Albanese map $L\mapsto\sum c_2(L)$.

	Consider the commutative diagram
	\[
	\begin{tikzcd}
		K_J(v_d) \arrow[dashed]{r}{[2]} \arrow[dashed]{d}{\mathrm{Nm}_\pi}  & K_J(v_{2d})  \arrow[dashed]{d}{\mathrm{Nm}_\pi} \\
		\overline{\Pic}^{d}({\mathcal{D}}/\PP^3) \arrow[dashed]{r}{[2]} & \overline{\Pic}^{2d}({\mathcal{D}}/\PP^3).
	\end{tikzcd}
	\]
	Both the horizontal maps are dominant and generically finite of the same degree~$2^6$. Indeed, they preserve the fibres of the respective support morphisms, which, generically, are torsors under abelian threefolds. Hence, the degree of $[2]$ is the number of points of order $2$ on an abelian threefold. 
	
	It thus suffices to prove the lemma for $\mathrm{Nm}_\pi \colon K_J(v_{2d}) \dashrightarrow \overline{\Pic}^{2d}({\mathcal{D}}/\PP^3)$. Since $K_J(v_{2d})$ is the closure of the image of $\pi^*$, the involution $\sigma^*$ is the identity on $K_J(v_{2d})$. Hence, the rational map
	\[
	\begin{tikzcd}
		K_J(v_{2d}) \arrow[dashed]{r}{\mathrm{Nm}_\pi\, } 
		&
		\overline{\Pic}^{2d}({\mathcal{D}}/\PP^3) \arrow[dashed]{r}{\pi^* } & K_J(v_{4d}) 
	\end{tikzcd} 
	\]
	coincides with multiplication by $[2]$; this composition is therefore dominant and generically finite of degree $2^6$. Since $\pi^*\colon \overline{\Pic}^{2d}({\mathcal{D}}/\PP^3)\dashrightarrow K_J(v_{4d})$ is generically finite of degree $2$, it follows that $\mathrm{Nm}_\pi$ is dominant and it has degree $2^5$.
	\end{proof}

	\subsection{The action of $G$}
	Assume now that $d$ is odd, so that $K_J(v_d)$ is a smooth variety of $\mathrm{Kum}^3$-type and the support map gives a Lagrangian fibration on it. In this case the group $\Aut_0(K_J(v_d))$ has been explicitly identified by Kim in \cite{Kim21}.
	
	\begin{proposition} \label{prop:identifyG}
		Let $d$ be odd. Then $G=\Pic^0(J)_2 \times \langle -1\rangle $, where $-1$ acts on $K_J(v_d)$ via the pull-back $F\mapsto (-1)^*(F)$ of sheaves, and the action of $L\in \Pic^0(J)_2$ is given by $F\mapsto F\otimes L$. Any element of $G$ preserves the fibres of the support fibration.		
	\end{proposition}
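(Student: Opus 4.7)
The plan is to exhibit the listed automorphisms, verify they lie in $\Aut_0(K_J(v_d))$, and identify the subgroup they generate with $G$ using the count $|G|=2^5$ from Lemma~\ref{lem:G}. First I would check that $F\mapsto F\otimes L$ for $L\in\Pic^0(J)_2$ and $F\mapsto (-1)^*F$ are well-defined automorphisms of $K_J(v_d)$. They preserve the Mukai vector $v_d=(0,2\Theta,d-4)$: tensoring because $c_1(L)=0$ implies $\mathrm{ch}(L)=1$, and $(-1)^*$ because $\Theta$ is symmetric. Both are involutions, since $L^{\otimes 2}\cong \mathcal{O}_J$ and $(-1)^2=\mathrm{id}$. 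Preservation of the support fibration is immediate: tensoring tautologically fixes supports, and $-1$ acts trivially on $|2\Theta|$. The subtler point is preservation of the Albanese fibre over $(\mathcal{O}_J, 0)$: for $(-1)^*$ this follows from equivariance; for $L\in \Pic^0(J)_2$ one uses the explicit Mukai-type formula for how $\Pic^0(J)\times J$ acts on the Albanese target of $M_J(v_d)$ \cite{Yos01, Kim21}, with the relevant shift vanishing because $d$ is odd and $L$ is 2-torsion.

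Next I would verify these automorphisms act trivially on $H^2$, and identify the subgroup they generate. Both extend to automorphisms of $M_J(v_d)$, and via the Mukai--Yoshioka Hodge isometry relating $H^2(M_J(v_d),\ZZ)$ to $v_d^\perp$ inside the Mukai lattice, tensoring by $L$ acts by cup product with $\mathrm{ch}(L)=1$, while $(-1)^*$ acts trivially on even-degree cohomology of $J$; both restrict to the identity on $H^2(K_J(v_d),\ZZ)$. Since $(-1)^*L = L^\vee = L$ for $L\in \Pic^0(J)_2$, these generators commute, so they span a subgroup $H\cong \Pic^0(J)_2\times \langle -1\rangle \cong (\ZZ/2\ZZ)^5$ inside $\Aut_0(K_J(v_d))$. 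By Lemma~\ref{lem:G} we have $|G|=2^5$, so equality $H=G$ will follow once each generator is shown to lie in $G$. By deformation invariance of $\Aut_0$ and of the dimensions of fixed loci \cite{hassettTschinkel}, it is enough to verify this in the degeneration to the generalized Kummer variety $K^3(A)$, where the involutions $F\mapsto (-1)^*F\otimes L$ correspond to the $16$ involutions $(\tau,-1)\in G$ analyzed in Proposition~\ref{prop:FixedLoci}, whose fixed loci contain the $\mathrm{K}3^{[2]}$-type fourfolds $\overline{W_\tau}$.

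The main obstacle is the Albanese computation: verifying that tensoring by a 2-torsion $L$ fixes the precise point $(\mathcal{O}_J,0)\in \Pic^0(J)\times J$ rather than merely preserving the Albanese fibration. Once this is settled, the triviality on $H^2$ and the order count combine to give the identification of $G$ essentially for free.
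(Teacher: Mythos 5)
The first half of your argument---exhibiting the two families of automorphisms, checking they lie in $\Aut_0(K_J(v_d))$ and preserve the support fibration---is essentially sound and overlaps with the paper, though the paper proves triviality on $H^2$ differently: it first establishes symplecticity and then uses that $K_J(v_d)$ has Picard rank $2$ together with Lemma~\ref{lem:observationLagrangian} (a fibration-preserving birational automorphism fixes the isotropic class $D$ and hence all of $\NS$), rather than your Mukai-lattice computation. Your ``main obstacle'' (that $\otimes L$ preserves the Albanese fibre) is handled in the paper by showing that the section $s_L$ of $K_J(v_0)_B\to B$ lands in the relative Prym variety; your appeal to ``$d$ odd and $L$ $2$-torsion'' is not by itself an argument, but this point is repairable.

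The genuine gap is in your final step, the identification $H=G$ for $H=\Pic^0(J)_2\times\langle -1\rangle$. You rightly reduce to showing $H\subseteq G$ and rightly shift attention from the generators $\otimes L$ (whose fixed loci are at most $2$-dimensional) to the $16$ involutions $(-1)^*\circ(\otimes L)$. But the assertion that these ``correspond to the $16$ involutions $(\tau,-1)\in G$'' under a degeneration to $K^3(A)$ is precisely the statement to be proved, not a consequence of deformation invariance. Inside $\Aut_0\cong A_4\rtimes\langle -1\rangle$ there are $16$ distinct subgroups isomorphic to $(\ZZ/2\ZZ)^5$, namely $(A_2\times\{1\})\cup((\epsilon+A_2)\times\{-1\})$ for $\epsilon\in A_4/A_2$, and only the one with $\epsilon\in A_2$ equals $G$; for the others, the elements of the form $(\cdot,-1)$ have only $2$-dimensional fixed loci by Lemma~\ref{lem:G}. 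So knowing that $H$ is an exponent-$2$ subgroup of order $2^5$ does not pin it down, and no concrete family connecting $K_J(v_d)$ to a generalized Kummer sixfold along which your specific automorphisms extend and specialize to the $(\tau,-1)$ is produced; nor do you compute directly on $K_J(v_d)$ that some $(-1)^*\otimes L$ fixes a fourfold. The paper closes the argument in the opposite direction: by Kim's theorem $H$ is exactly the subgroup of $\Aut_0$ preserving the support fibration; the quotient $\Aut_0/H\cong(\ZZ/2\ZZ)^4$ acts on $|2\Theta|=\PP^3$ so that any $h\notin H$ fixes only a pair of skew lines, and Matsushita's theorem (a $\mathrm{K}3^{[2]}$-type fourfold admits no nonconstant morphism to $\PP^1$) rules out a $4$-dimensional fixed component for such $h$; hence $G\subseteq H$, and the order count from Lemma~\ref{lem:G} gives equality. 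You would need either to adopt this complementary argument or to supply the missing fixed-locus computation on $K_J(v_d)$ itself.
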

	\begin{proof}
		We first show that $\Pic^0(J)_2\times\langle -1\rangle$ acts on $K_J(v_d)$ via automorphisms trivial on the second cohomology and which preserve the Lagrangian fibration given by the support morphism.
		
		Since $-1\colon J\to J$ acts trivially on the cohomology of $J$ in even degrees, the pull-back of sheaves defines an automorphism $(-1)^*$ of $K_J(v_d)$; this automorphism is trivial for $d$ even but not for $d$ odd. By \cite[Lemma~2.34]{mongardi2015}, the action of $(-1)^*$ is symplectic, i.~e., the induced action on~the transcendental cohomology $H^2_{\mathrm{tr}}(K_J(v_d), \ZZ)$ is the identity. Since all curves in $\lvert 2\Theta\rvert $ are stable under $-1$, the automorphism $(-1)^*$ preserves all fibres of the support fibration. 
		The Picard rank of $K_J(v_d)$ is $2$, because~$J$ is a general abelian surface by assumption (see \cite{Yos01}). Hence, by Lemma~\ref{lem:observationLagrangian} below, the induced action of $(-1)^*$ on $\NS(K_J(v_d))$ is also the identity. Since $H^2(K_J(v_d), \ZZ)$ is torsion-free, we conclude that $(-1)^*$ acts trivially on the second cohomology. 
		
		Let now $L\in \Pic^0(J)_2$.
		Let $B\subset \lvert 2\Theta\rvert $ be the locus of smooth curves, and consider the universal curve $j\colon \tilde{\mathcal{C}}_B\hookrightarrow J\times B$. By \cite[Lemma 6.9]{Wie18}, the corresponding pull-back $j^*\colon \Pic^0(J)\times B \to \overline{\Pic}^0(\tilde{\mathcal{C}}/\PP^3)_B$ is an injection of abelian schemes. Note that the covering involution $\sigma$ for the cover $\pi\colon \tilde{\mathcal{C}}_B\to {\mathcal{C}}_B$ is the restriction of $-1\times \mathrm{id}_B$. Hence, the induced involution $\sigma^*$ on $\overline{\Pic}^0(\tilde{\mathcal{C}}/\PP^3)_B$ is the inverse map on $j^*_b(\Pic^0(J)) $, for each~$b\in B$. As~$K_J(v_0)$ is the fixed locus of this involution, the $2$-torsion line bundle $L$ on~$J$ corresponds uniquely to a $2$-torsion section $s_L$ of $K_J(v_0)_B\to B$. For any $d$, the variety $K_J(v_d)_B$ is a torsor under $K_J(v_0)_B$ and hence $F\mapsto F\otimes L$ induces a birational automorphism $g_L$ of $K_J(v_d)$, which restricts to a translation on the smooth fibres of the support morphism. This implies that $g_L$ is symplectic. We deduce from Lemma \ref{lem:observationLagrangian} that $g_L$ extends to a regular automorphism of $K_J(v_d)$, whose induced action is trivial on $\NS(K_J(v_d))$ as well.
		
		According to \cite[Theorem 5.1]{Kim21} we have described all automorphisms in $\Aut_0(K_J(v_d))$ preserving the fibration $\mathrm{supp}\colon K_J(v_d)\to \PP^3$. We claim now that an automorphism in $\Aut_0(K_J(v_d))$ which does not preserve the fibration cannot fix a component of dimension $4$ on $K_J(v_d)$; this will show that $G=\Pic^0(J)_2\times \langle -1\rangle$. 
		
		Let $D = \mathrm{supp}^* (\mathcal{O}_{\PP^n}(1)) \in \Pic(K_J(v_d))$ be the line bundle inducing the fibration. Since $\Aut_0(K_J(v_d))$ acts trivially on $H^2(K_J(v_d),\ZZ)$, it acts on $|D|= \PP^3=|2\Theta|$. We obtain an action of $\Aut_0(K_J(v_d))/(\Pic^0(J)_2\times \langle -1\rangle) \cong (\ZZ/2\ZZ)^4$ on $|2\Theta|$, which, up to conjugation by an automorphism of $\PP^3$, is identified with the action generated by
		\begin{align*}
		(x,y,z,w) & \mapsto (z,w,x,y),\ \ \ & (x,y,z,w) & \mapsto (y,x,w,z), \\
		 (x,y,z,w) & \mapsto (x,y,-z,-w), \ \ \ & (x,y,z,w) & \mapsto (x, -y, z, -w);
		\end{align*}
	 see \cite[Lemma 1.52, Note 1.4]{del1994}. Thus, $h\in \Aut_0(K_J(v_d))$ either acts trivially on $|2\Theta|$ or it fixes a pair of skew lines. Assume by contradiction that $h\in \Aut_0(K_J(v_d))$ fixes a fourfold but does not preserve the support fibration. Then $(K_J(v_d))^h$ contains one of the varieties ${Z_{j}}$ of $\mathrm{K}3^{[2]}$-type from Theorem~\ref{thm:fixedLoci}, and the image of $Z_j$ via the fibration must be a line $R \subset |2\Theta|$ fixed by~$h$. But, by a theorem of Matsushita \cite{matsushita}, the $\mathrm{K}3^{[2]}$-variety $Z_j$ does not admit any non-constant morphism to $\PP^1$.
	\end{proof}

	\begin{remark}\label{rmk:relativePrym}
	By \cite{verra}, the relative Prym variety $\mathcal{P}\to B$ of the double cover $\pi\colon \tilde{\mathcal{C}}_B\to \mathcal{C}_B$ is an isotrivial family over $B$ with fibre $J$. Since the involution $\sigma^*$ acts as the inverse on the image of the pull-back $j^*$, the constant abelian scheme $j^*(\Pic^0(J)\times B)$ is contained into, and hence equal to, the relative Prym variety.	
	\end{remark} 

	\begin{lemma}\label{lem:observationLagrangian}
		Let $f\colon X\to \PP^n$ be a Lagrangian fibration, where $X$ is a $2n$-dimensional projective hyper-K\"ahler variety of Picard rank $2$. Let $g\colon X\dashrightarrow X$ be a birational automorphism such that $f\circ g = f$. Then $g$ extends to a regular automorphism of $X$, and $g^*_{\lvert_{ \NS(X)}}\colon \NS(X)\to \NS(X)$ is the identity.
	\end{lemma}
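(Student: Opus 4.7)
The plan is to proceed in two stages: I would first show that $g^*$ acts as the identity on $\NS(X)$, and then deduce from this that $g$ extends to a regular automorphism via the standard behavior of birational self-maps of hyper-K\"ahler manifolds. For the first stage, introduce the class $D\coloneqq f^*\mathcal{O}_{\PP^n}(1)\in\NS(X)$. Since $f$ factors through the $n$-dimensional $\PP^n$, one has $D^{n+1}=0$, and Fujiki's relation $\int_X\alpha^{2n}=c_X\,q(\alpha)^n$ for the Beauville-Bogomolov-Fujiki form $q$ then forces $q(D)=0$; thus $D$ is isotropic. Any birational self-map of a hyper-K\"ahler manifold is an isomorphism in codimension one, so $g^*$ acts on $H^2(X,\ZZ)$ as a Hodge isometry for $q$, and the hypothesis $f\circ g=f$ immediately yields $g^*D=D$.

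Since $X$ is projective with $\rho(X)=2$, the restriction of $q$ to $\NS(X)_\QQ$ has signature $(1,1)$ and, containing the non-zero isotropic vector $D$, is isomorphic to a hyperbolic plane over $\QQ$. Pick $E\in\NS(X)_\QQ$ with $q(E)=0$ and $\langle D,E\rangle=1$, and write $g^*(E)=bD+dE$; the isometry conditions $\langle g^*E,g^*E\rangle=0$ and $\langle g^*D,g^*E\rangle=1$ force $d=1$ and then $b=0$. Hence the stabilizer of $D$ in $O(\NS(X)_\QQ)$ is trivial, so $g^*|_{\NS(X)}=\mathrm{id}$.

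In particular $g^*$ fixes every ample class of $X$, hence preserves the K\"ahler cone. By Huybrechts' criterion \cite{Huy99} characterizing automorphisms among birational self-maps of hyper-K\"ahler manifolds, $g$ extends to a regular automorphism of $X$. The argument presents no genuine obstacle: the only computational step is the triviality of the stabilizer of an isotropic vector in $O(\NS(X)_\QQ)$, and this is precisely where the Picard rank $2$ hypothesis plays an essential role—in higher Picard rank the orthogonal complement of $D$ would carry non-trivial isometries.
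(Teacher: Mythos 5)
Your proof is correct and follows essentially the same route as the paper: fix $D=f^*\mathcal{O}_{\PP^n}(1)$, note $g^*D=D$ and that $D$ is isotropic, deduce that the isometry $g^*$ is the identity on the rank-$2$ lattice $\NS(X)$, and conclude biregularity from the fact that $g^*$ preserves an ample (K\"ahler) class. You merely make explicit two steps the paper leaves implicit (the Fujiki-relation argument for $q(D)=0$ and the hyperbolic-plane computation showing the stabilizer of $D$ is trivial), and you cite Huybrechts where the paper cites Fujiki for the final extension step; both are standard references for that fact.
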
 
	\begin{proof}
		The N\'eron-Severi group $\NS(X)$ is a rank $2$ lattice of signature $(1,1)$. Any birational automorphism $g$ induces an isometry $g^*$ of $H^2(X,\ZZ)$ which restricts to an isometry of $\NS(X)$. 
		Denote by $D\in \NS(X)$ the class of the line bundle $f^*(\mathcal{O}_{\PP^n}(1))$, and pick $E\in \NS(X)$ such that $D$ and $E$ generate $\NS(X)\otimes \QQ$. Since $f\circ g=f$, we have $g^*(D)=D$. Using that $D$ is isotropic one sees that this this forces $g^*(E)=E$ as well, so that $g^*$ is the identity on $\NS(X)$; by \cite{fujiki1981}, $g$ extends to an automorphism~of~$X$.
	\end{proof}

We can now complete the proof of the main result of this section. 

\begin{proof}[Proof of Theorem \ref{thm:main}]
	Using the idenfication of $G$ given in Proposition \ref{prop:identifyG} we will show that the norm map descends to a birational map $K_J(v_d)/G\dashrightarrow \overline{\Pic}^d(\mathcal{D}/\PP^3)$, for any odd $d$. The varieties $ \overline{\Pic}^d(\mathcal{D}/\PP^3)$ are of $\mathrm{K}3^{[3]}$-type, and $\overline{\Pic}^3(\mathcal{D}/\PP^3)$ is birational to $\Km(J)^{[3]}$, see \cite[Proposition 1.3]{beauville1999}.
	
	For a smooth curve $\tilde{\mathcal{C}}_b \in \lvert 2\Theta \rvert $ the norm map is induced by the map of divisors which sends $\sum_i a_i[P_i] $ to $\sum_i a_i [\pi(P_i)]$. It is then clear that $\mathrm{Nm}_\pi((-1)^*(F))= \mathrm{Nm}_\pi(F)$ for any~$F\in K_J(v_d)_B$. Let instead $L\in \Pic^0(J)_2$, and let $s_L\colon \PP^3 \dashrightarrow K_J(v_0) $ be the rational section defined by $s_L(b)= j_b^*(L)$, where $j\colon \tilde{\mathcal{C}}_B\hookrightarrow J\times B$ denotes the natural inclusion. By Remark \ref{rmk:relativePrym}, the section $s_L$ is contained in the relative Prym variety; in particular, the composition $\mathrm{Nm}_\pi\circ s_L$ gives the zero section of $\overline{\Pic}^0(\mathcal{D}/\PP^3)_B\to \PP^3$. 
		
	Therefore, for any $g\in G$ and $y\in K_J(v_d)_B$ we have $\mathrm{Nm}_\pi(g(y))= \mathrm{Nm}_\pi(y)$, which means that $\mathrm{Nm}_\pi\colon K_J(v_d)\dashrightarrow \overline{\Pic}^d(\mathcal{D}/\PP^3)$ descends to a rational map 
	\[
	\overline{\mathrm{Nm}}_\pi\colon K_J(v_d)/G \dashrightarrow \overline{\Pic}^d(\mathcal{D}/\PP^3).
	\]
	This map is in fact birational, because $\mathrm{Nm}_\pi\colon K_J(v_d)\dashrightarrow \overline{\Pic}^d(\mathcal{D}/\PP^3)$ is generically finite of degree $2^5$ by Lemma \ref{lem:restriction}, and $G$ has also order $2^5$. 
\end{proof}

\section{The K3 surface associated to a $\mathrm{Kum}^3$-variety}\label{sec:AssociatedK3}

In this section we give the proof of Theorems \ref{thm:MainResult} and \ref{thm:AssociatedK3}. We let $K$ be a manifold of $\mathrm{Kum}^3$-type and consider the quotient $K/G$ by the group $G\cong (\ZZ/2\ZZ)^5$ of Definition~\ref{def:G}. We will show that the blow-up of the singular locus of $K/G$ yields a hyper-K\"ahler manifold~$Y_K$ of $\mathrm{K}3^{[3]}$-type.

\subsection{The singularities of $K/G$}
By Theorem \ref{thm:fixedLoci}, the locus $Z=\bigcup_{1\neq g\in G} K^g$ is the union of $16$ irreducible components $Z_i$, for~$i=1,\dots, 16$. Denote by $X_i  \subset K/G$, the image of $Z_i$ via the quotient map $q\colon K\to K/G$. We introduce the following stratification of $K/G$ into closed subspaces:
\[
X^j \coloneqq  \{x \in K/G \ \vert \ x \text{ belongs to at least } j \text{ components } X_i\};
\]
clearly, $X^0=K/G$.

\begin{proposition}\label{prop:singularities}
The subspace $X^j$ is empty for $j\geq 4$. For $j<4$, a point $x\in X^j\setminus X^{j+1}$ has a neighborhood $U_x\subset K/G$ analytically isomorphic to 
	\[
	(\CC^2/\iota)^{j} \times (\CC^2)^{3-j},
	\]
	where $\iota$ is the involution $(x,y)\mapsto (-x,-y)$.
\end{proposition}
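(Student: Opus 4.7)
The plan is to analyze $K/G$ locally at $x$ by fixing a lift $\tilde x \in K$, identifying its stabilizer $H \subseteq G$, and linearizing. Since every $g \in G$ preserves each component $Z_i$ (each $Z_i = \overline{W_{\tau_i}}$ in the Kummer model is stable under $G$ by Remark~\ref{rmk:action} applied to $\epsilon \in A_2$), the point $\tilde x$ lies in exactly as many $Z_i$ as its image $x$ lies in $X_i$. Theorem~\ref{thm:fixedLoci} immediately yields $X^j = \emptyset$ for $j \geq 4$, since four components never meet. By the deformation invariance used in the proof of Theorem~\ref{thm:fixedLoci}, for the remaining claim I may assume $K = K^3(A)$, and denote the $j$ components containing $\tilde x$ by $\overline{W_{\tau_1}}, \ldots, \overline{W_{\tau_j}}$ and the corresponding involutions by $g_{\tau_i} = (\tau_i, -1)$.

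The first step is to identify $H$ precisely. The inclusion $H \supseteq H_0 := \langle g_{\tau_1}, \ldots, g_{\tau_j}\rangle \cong (\ZZ/2\ZZ)^j$ is immediate. Conversely, using Proposition~\ref{prop:FixedLoci}, any $(\tau', -1) \in G \setminus H_0$ fixing $\tilde x$ would either place $\tilde x$ in an additional $\overline{W_{\tau'}}$ (contradicting the count) or exhibit $\tilde x$ as an isolated fixed point of $g_{\tau'}$; but such isolated fixed points are of the form $(\epsilon_1, \ldots, \epsilon_4)$ with $\epsilon_i \in A_{2,\tau'}$ distinct and summing to $0$, and one checks that each of the three pairings of the $\epsilon_i$'s into two pairs produces an element $\sigma \in A_2$ with $\tilde x \in \overline{W_\sigma}$, the three $\sigma$'s summing to $\tau'$. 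This forces $j = 3$ and $g_{\tau'} = g_{\sigma_1}g_{\sigma_2}g_{\sigma_3} \in H_0$. Similarly, a translation $(\tau', 1)$ fixing $\tilde x$ forces $\tilde x \in \overline{V_{\tau', \theta}} = \overline{W_\theta} \cap \overline{W_{\theta+\tau'}}$ for some $\theta$ (by Lemma~\ref{lem:intersections}), which requires $j \geq 2$ and $\tau' = \tau_a + \tau_b$ for some $a \neq b$, so $(\tau', 1) = g_{\tau_a}g_{\tau_b} \in H_0$. Hence $H = H_0$.

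Next I linearize and decompose. Since $H$ is finite, Cartan's linearization of a finite analytic action at a fixed point gives an $H$-equivariant analytic isomorphism from a neighborhood of $\tilde x$ in $K$ to a neighborhood of $0$ in $T_{\tilde x}K \cong \CC^6$ with the linear $H$-action. Each $g_{\tau_i}$ acts with $(+1)$-eigenspace $T_{\tilde x}\overline{W_{\tau_i}}$ of dimension $4$ and $(-1)$-eigenspace $N_i$ of dimension $2$. The transversality of the components, guaranteed by the dimensions of their intersections in Theorem~\ref{thm:fixedLoci}, ensures that the $N_i$ are linearly independent. For $i \neq i'$, the commuting involution $g_{\tau_i}$ preserves $N_{i'}$, and any vector of $N_{i'}$ on which it acted as $-1$ would lie in $N_i \cap N_{i'} = 0$; hence $g_{\tau_i}|_{N_{i'}} = \mathrm{id}$. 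Setting $T := \bigcap_i T_{\tilde x}\overline{W_{\tau_i}}$ of dimension $6 - 2j$, I obtain the $H$-equivariant splitting
\[
T_{\tilde x}K \;=\; N_1 \oplus \cdots \oplus N_j \oplus T,
\]
on which $g_{\tau_i}$ acts as $-1$ on $N_i$ and as the identity on every other summand. The quotient by $H \cong (\ZZ/2\ZZ)^j$ is then $(\CC^2/\iota)^j \times (\CC^2)^{3-j}$, as claimed.

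The main obstacle is the clean identification of $H$, specifically the arithmetic argument ruling out that $\tilde x$ is inadvertently an isolated fixed point of an extra involution in $G \setminus H_0$ when $j < 3$; this is exactly what the combinatorics of pairings of the $\epsilon_i$'s in Proposition~\ref{prop:FixedLoci} delivers. Once $H$ is pinned down, the linearization together with the eigenspace decomposition for commuting symplectic involutions is routine.
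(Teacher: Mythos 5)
Your proof is correct and follows essentially the same route as the paper: reduce to the generalized Kummer model via the locally trivial deformation of the $G$-action, identify the stabilizer of a lift $\tilde x$ using the fixed-locus computations, and decompose $T_{\tilde x}K$ into joint eigenspaces of the commuting involutions $(\tau_i,-1)$ to obtain the local model $(\CC^2/\iota)^j\times(\CC^2)^{3-j}$. You are in fact somewhat more explicit than the paper in ruling out extra elements of the stabilizer (via the pairing combinatorics of the isolated fixed points) and in invoking Cartan's linearization, but these are presentational refinements of the same argument.
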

\begin{proof}
	Theorem \ref{thm:fixedLoci} implies immediately that $X^j$ is empty for $j\geq 4$. The group $G$ acts freely on $q^{-1}(X^0\setminus X^1)$, and hence $X^0\setminus X^1$ is smooth.	
	
	If $\mathcal{K}\to B$ is a smooth proper family of $\mathrm{Kum}^3$-manifolds, the quotient $\mathcal{K}/G \to B$ is a locally trivial family (see \cite[Lemma 3.10]{fujiki1983}): any point $x\in \mathcal{K}/G$ has a neighborhood $U_x$ of the form to $f(U_x)\times (f^{-1}(f(x))\cap U_x)$. Therefore, to prove the proposition we may assume that $K$ is the generalized Kummer sixfold on an abelian surface $A$.
	
	In this case the components $Z_{i}$ are the explicit $\overline{W_{\tau}}$, for $\tau \in A_2$ (see Definition~\ref{def:RelevantLoci}). Recall that $\overline{W_{\tau}}\subset K$ is the unique positive dimensional component of the fixed locus of $(\tau, -1)\in G$, and that the induced action of $G/\langle (\tau,-1) \rangle$ on $\overline{W_{\tau}}$ is faithful. 
	For any $z\in \overline{W_{\tau}}$ there is a decomposition $T_z(K)=N_{\overline{W_{\tau}}\vert K, z}\oplus T_z(\overline{W_{\tau}})$, where the first factor is the normal space. The action of $(\tau, -1)$ on $T_z(K)$ is $(-1, 1)$. 
	Let now $x\in X^1 \setminus X^2$, and let $z\in K$ be a preimage of $x$. By definition, $z$ belongs to exactly one component $\overline{W_{\tau}}$. The stabilizer is $G_z=\langle (\tau,-1) \rangle$. By the above, there exists a neighborhood $V_z$ of $z\in K$ such that $g(V_z)\cap V_z$ is empty for any $g \notin G_z$, and $V_z=(\CC^2)^3$ with $(\tau, -1)_{\vert_{V_z}} =(\iota, \mathrm{id}, \mathrm{id})$. The image of $V_z$ under the quotient map is thus a neighborhood $U_x=(\CC^2/\iota)\times (\CC^2)^2$ of $x$ in $K/G$.
	
	In the other cases we proceed similarly. Let $x\in X^2\setminus X^3$, and let $z\in K$ be one of its preimages. Then $z$ belongs to two distinct components $\overline{W_{\tau_1}}$ and $\overline{W_{\tau_2}}$. The stabilizer $G_z\cong (\ZZ/2\ZZ)^2$ is the subgroup $\langle(\tau_1,-1), (\tau_2,-1)\rangle$ of $G$. There is a decomposition $T_z(K) = N_{\overline{W_{\tau_1}}\vert K, z} \oplus N_{\overline{W_{\tau_2}}\vert K, z} \oplus T_z(\overline{W_{\tau_1}}\cap \overline{W_{\tau_2}})$, and the action of $G_z$ is generated by $(-1, 1, 1)$ and $(1, -1, 1)$. This implies that the image in $K/G$ of a sufficiently small neighborhood of $z$ in $K$ is isomorphic to $(\CC^2/\iota)^2\times (\CC^2)$.
	
	Finally, let $x\in X^3$ and $z\in K$ a preimage of it. Then $z$ belongs to exactly $3$ components $\overline{W_{\tau_i}}$, for $i=1,2,3$. The stabilizer $G_z$ is the subgroup generated by the involutions $(\tau_i, -1)$, for $i=1,2,3$, and there is a decomposition $T_z(K)=\bigoplus_{i=1}^3 N_{\overline{W_{\tau_i}}\vert K,z}$ of the tangent space. The action of $(\tau_i,-1)$ on $T_{z}(K)$ is $-1$ on the $i$-th summand and the identity on the complement. We conclude that there exists a neighborhood $V_z$ of $z$ in~$K$ whose image in $K/G$ is isomorphic to $(\CC^2/\iota)^3$.
\end{proof}

\subsection{The symplectic resolution of $K/G$} 
We will now conclude the proof of Theorem \ref{thm:MainResult} in several steps. First we show that the blow-up of the singular locus of~$K/G$ resolves the singularities.

\begin{proposition}\label{prop:resolution}
	Let $Y_K\coloneqq \mathrm{Bl}_{X^1}(K/G)$ be the blow-up of the singular locus with reduced structure. Then $Y_K$ is a smooth manifold. It is identified with the quotient by $G$ of the blow-up of $K$ at $Z=\bigcup_{1\neq g\in G} K^g$.
\end{proposition}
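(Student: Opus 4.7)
The plan is to reduce to an explicit local computation via Proposition \ref{prop:singularities}: at each stratum the local picture of $K/G$ is a product of nodal surfaces with a smooth factor, and the blow-up of its singular locus can be computed by hand in these coordinates.

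Concretely, I pick $x\in X^j\setminus X^{j+1}$ for some $0\leq j\leq 3$, lift it to a point $z\in K$, and choose a $G$-invariant analytic neighborhood $V_z\cong(\CC^2)^3$ of $z$ on which the stabilizer $G_z\cong(\ZZ/2\ZZ)^j$ acts by the involution $\iota=-1$ on the first $j$ factors and trivially on the remaining $3-j$, as in the proof of Proposition \ref{prop:singularities}. The reduced intersection $Z\cap V_z$ is then the union $W=\bigcup_{l=1}^j\{p_l=0\}$ of the coordinate codimension-two planes fixed by the generators of $G_z$. Since the ideals $I_l=(x_l,y_l)$ of these planes involve pairwise disjoint variables, the reduced ideal of $W$ equals $I_1\cap\cdots\cap I_j=I_1\cdots I_j$; using that blowing up a product ideal equals the iterated blow-up of its factors and that blow-ups in different coordinate factors commute, I obtain
\[
\mathrm{Bl}_W(V_z)\;\cong\;\bigl(\mathrm{Bl}_0(\CC^2)\bigr)^j\times(\CC^2)^{3-j},
\]
a smooth variety carrying a lifted $G_z$-action by $\iota$ on each of the first $j$ factors. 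Because $\mathrm{Bl}_0(\CC^2)/\iota$ is the minimal resolution of the ordinary double point $\CC^2/\iota$, the quotient $(\mathrm{Bl}_W V_z)/G_z$ is again smooth.

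To identify this quotient with the local piece of $Y_K=\mathrm{Bl}_{X^1}(K/G)$, I invoke the universal property of the blow-up: $(\mathrm{Bl}_W V_z)/G_z$ is a normal variety over $V_z/G_z$ on which the pullback of the ideal of the singular locus is locally principal, so it receives a unique map from, and must coincide with, the local model of $\mathrm{Bl}_{X^1}(K/G)$. The transversal intersection pattern of the $Z_i$ recorded in Theorem \ref{thm:fixedLoci} guarantees that these local identifications are compatible: the stalk of the ideal sheaf of $Z\subset K$ at $z$ factors as the product of the ideals of the components passing through $z$, so the local quotient resolutions glue equivariantly to a global identification
\[
Y_K=\mathrm{Bl}_{X^1}(K/G)\;\cong\;(\mathrm{Bl}_Z K)/G,
\]
and in particular $Y_K$ is smooth.

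The main obstacle is the global bookkeeping in the last step: one has to check that the two candidate resolutions $(\mathrm{Bl}_Z K)/G$ and $\mathrm{Bl}_{X^1}(K/G)$ agree globally and not just on each local chart. Transversality of the components of $Z$ established in Theorem \ref{thm:fixedLoci} is the key input, as it ensures that the product-of-ideals description of the ideal of $Z$ is intrinsic, so that both constructions satisfy the same universal property over $K/G$.
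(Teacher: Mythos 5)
Your proof is correct and follows essentially the same route as the paper: both reduce to the local models of Proposition \ref{prop:singularities}, use the product structure of the ideal of $Z$ (respectively of the singular locus of $K/G$) in disjoint coordinate factors to reduce to the minimal resolution of a single $A_1$ surface singularity, and thereby identify $\mathrm{Bl}_{X^1}(K/G)$ with $\mathrm{Bl}_Z(K)/G$. The only cosmetic difference is that the paper computes the downstairs blow-up directly via its product lemma (Lemma \ref{lem:blowuplocal}) while you compute the upstairs blow-up and descend; at your final comparison you should add one line noting that the unique morphism supplied by the universal property is an isomorphism because both sides restrict to the minimal resolution of the node in each surface factor.
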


The proposition will be reduced to the following elementary statement.

\begin{lemma}\label{lem:blowuplocal}
	Let $\iota \colon \CC^2 \to \CC^2$ be the involution $(x,y)\mapsto (-x, -y)$, let $j,k$ be positive integers.
	Then the blow-up of $(\CC^2/\iota)^j\times (\CC^2)^k$ along its singular locus is isomorphic to $( \mathrm{Bl}_0 (\CC^2/\iota) )^j\times (\CC^2)^k$. It is thus smooth, and identified with the quotient $(\mathrm{Bl}_0(\CC^2)/\iota)^j\times (\CC^2)^k$ of $(\mathrm{Bl}_0(\CC^2))^j\times (\CC^2)^k$. 
\end{lemma}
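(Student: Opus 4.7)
The plan is to reduce the lemma to the assertion that the blow-up of $(\CC^2/\iota)^j$ along its singular locus equals $(\mathrm{Bl}_0(\CC^2/\iota))^j$; the smooth factor $(\CC^2)^k$ plays no role and will simply be carried along by flat base change. First, I would identify the reduced singular locus of $X \coloneqq (\CC^2/\iota)^j \times (\CC^2)^k$. A point is singular precisely when at least one of the first $j$ coordinates equals the singular point $0 \in \CC^2/\iota$, so $X^{\mathrm{sing}} = \bigcup_{i=1}^j S_i$ with $S_i \coloneqq \pi_i^{-1}(0)$, where $\pi_i\colon X \to \CC^2/\iota$ is the $i$-th projection. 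Because each $S_i$ is pulled back from a distinct tensor factor, the ideal sheaves $\mathcal{I}_i \coloneqq \mathcal{I}_{S_i}$ satisfy $\mathcal{I}_{X^{\mathrm{sing}}} = \bigcap_{i=1}^j \mathcal{I}_i = \mathcal{I}_1 \mathcal{I}_2 \cdots \mathcal{I}_j$, as one checks by repeated application of flatness of one factor over $\CC$ when cutting out the other.

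Next, I would produce the desired morphism using the universal property of blowing up. Let $\tilde X \coloneqq (\mathrm{Bl}_0(\CC^2/\iota))^j \times (\CC^2)^k$ and let $f\colon \tilde X \to X$ denote the natural proper birational map. Each $\mathcal{I}_i$ is the pull-back of the maximal ideal at the origin from a single factor $\CC^2/\iota$, and on that factor the blow-up $\mathrm{Bl}_0(\CC^2/\iota) \to \CC^2/\iota$ by construction turns this maximal ideal into an invertible sheaf. Consequently each $f^{-1}\mathcal{I}_i \cdot \mathcal{O}_{\tilde X}$ is invertible on $\tilde X$, hence so is their product $f^{-1}(\mathcal{I}_1 \cdots \mathcal{I}_j)\cdot \mathcal{O}_{\tilde X} = f^{-1}\mathcal{I}_{X^{\mathrm{sing}}}\cdot \mathcal{O}_{\tilde X}$. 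By the universal property of the blow-up, $f$ factors as $\tilde X \to \mathrm{Bl}_{X^{\mathrm{sing}}}(X) \to X$.

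To show the first arrow is an isomorphism, I would realize both varieties as the same iterated blow-up $\mathrm{Bl}_{S_j} \circ \cdots \circ \mathrm{Bl}_{S_1}(X)$. On the one hand, since $S_i$ is pulled back from the $i$-th factor $\CC^2/\iota$ and each preceding blow-up modifies only other, independent factors, the strict transform of $S_i$ at each stage coincides with its total transform, and each successive blow-up is simply flat base change of $\mathrm{Bl}_0(\CC^2/\iota) \to \CC^2/\iota$ in the $i$-th factor; iterating this yields $\tilde X$. On the other hand, blowing up $\mathcal{I}_1$ and then the pull-back of $\mathcal{I}_2$ is the blow-up of the product $\mathcal{I}_1\mathcal{I}_2$, and so on inductively, giving $\mathrm{Bl}_{\mathcal{I}_1\cdots\mathcal{I}_j}(X) = \mathrm{Bl}_{X^{\mathrm{sing}}}(X)$ by the ideal computation of the first step. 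Comparing, $\tilde X \cong \mathrm{Bl}_{X^{\mathrm{sing}}}(X)$.

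Finally, $\mathrm{Bl}_0(\CC^2/\iota)$ is the minimal resolution of an $A_1$-singularity and hence smooth, so $\tilde X$ is smooth. The involution $\iota$ lifts equivariantly to $\mathrm{Bl}_0(\CC^2)$, fixes the exceptional divisor pointwise, and acts as a reflection on the normal direction; by Chevalley--Shephard--Todd the quotient $\mathrm{Bl}_0(\CC^2)/\iota$ is smooth and resolves $\CC^2/\iota$, hence $\mathrm{Bl}_0(\CC^2)/\iota \cong \mathrm{Bl}_0(\CC^2/\iota)$. Taking $j$-fold products and multiplying by $(\CC^2)^k$ gives the final identification $\tilde X \cong (\mathrm{Bl}_0(\CC^2)/\iota)^j \times (\CC^2)^k$. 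The main technical point is the iterated blow-up identification in the third step: it crucially uses that the $\mathcal{I}_i$ come from independent tensor factors, making all their strict transforms equal their total transforms and their intersection equal their product.
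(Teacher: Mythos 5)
Your proof is correct and follows the same overall strategy as the paper's: identify the reduced singular locus as the union of the pullbacks $\pr_i^{-1}(0)$, reduce via a product formula for blow-ups to the case $j=1$, $k=0$, and conclude with the minimal resolution of an $A_1$-singularity. The only real difference is how the product formula $\mathrm{Bl}_{(V_1\times T_2)\cup(T_1\times V_2)}(T_1\times T_2)\cong \mathrm{Bl}_{V_1}(T_1)\times\mathrm{Bl}_{V_2}(T_2)$ is justified: the paper computes it directly from the Proj description of the blow-up (the Rees algebra of $\mathcal{I}_1\mathcal{I}_2$ is the Segre product of the two Rees algebras), whereas you derive it from the universal property together with the iterated blow-up identity $\mathrm{Bl}_{\mathcal{I}\mathcal{J}}=\mathrm{Bl}_{\tilde{\mathcal{J}}}\circ\mathrm{Bl}_{\mathcal{I}}$ and the observation $\bigcap_i\mathcal{I}_i=\prod_i\mathcal{I}_i$ for ideals from independent tensor factors; both routes are sound, and yours also makes explicit the identification $\mathrm{Bl}_0(\CC^2)/\iota\cong\mathrm{Bl}_0(\CC^2/\iota)$ that the paper leaves as well known.
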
	
\begin{proof}
	Given noetherian schemes $T_1$ and $T_2$ and closed subschemes $V_1\subsetneq T_1$ and $V_2\subsetneq T_2$, we have
	\[
	\mathrm{Bl}_{(V_1\times T_2) \cup ( T_1\times V_2)} (T_1\times T_2) = \mathrm{Bl}_{V_1}(T_1) \times \mathrm{Bl}_{V_2}(T_2).
	\]
	It is easy to see this when $T_1$ and $T_2$ are affine schemes, using the definition of blowing-up via the Proj construction (\cite[II, \S7]{Hartshorne}). 
	The singular locus of $(\CC^2/\iota)^j\times (\CC^2)^k$ is the union of $ \pr_i^{-1}(\{0\})$ for $i$ from $1$ to $j$, where $\pr_i\colon (\CC^2/\iota)^j\times (\CC^2)^k \to (\CC^2/\iota)$ is the projection onto the $i$-th factor.
	Via the above observation, the statement is reduced to the case $j=1$, $k=0$, which is the well-known minimal resolution of an isolated $A_1$-singularity of a surface.
\end{proof} 

\begin{proof}[Proof of Proposition \ref{prop:resolution}]
	By Proposition \ref{prop:singularities}, any point $x\in K/G$ has a neighborhood of the form $(\CC^2/\iota)^j\times (\CC^2)^k$. Therefore, Lemma \ref{lem:blowuplocal} immediately implies that the blow-up $Y_K$ of the singular locus is smooth. It also implies that the natural birational map $\mathrm{Bl}_Z(K)/G\dashrightarrow Y_K$ extends to an isomorphism.
\end{proof}

We will now use a criterion due to Fujiki \cite{fujiki1983} to show that $Y_K$ is symplectic, i.\ e.\ that it admits a nowhere degenerate holomorphic $2$-form.
\begin{proposition}\label{prop:symplectic}
	The manifold $Y_K$ is hyper-K\"{a}hler. 
\end{proposition}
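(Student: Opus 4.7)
The plan is to apply Fujiki's criterion \cite{fujiki1983} for primitive symplectic V-manifolds: if $X$ is a compact simply connected Kähler orbifold with $H^0(X^{\mathrm{reg}}, \Omega^2)$ spanned by a nowhere-degenerate form, and $X$ admits a smooth symplectic resolution $Y$, then $Y$ is hyper-Kähler. I would verify the hypotheses for $X = K/G$ and $Y = Y_K$.

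\emph{Primitive symplectic orbifold structure on $K/G$.} Kähler-ness is inherited from $K$ through the finite quotient. Since $G \subseteq \Aut_0(K)$ acts trivially on $H^2(K,\ZZ)$, the holomorphic symplectic form $\omega_K$ is $G$-invariant; it descends to a nowhere-degenerate $2$-form $\bar\omega$ on the regular locus $(K/G) \setminus X^1$, and $H^0((K/G)^{\mathrm{reg}}, \Omega^2) = H^{2,0}(K)^G$ is one-dimensional. Simple connectedness follows from Armstrong's theorem: $K$ is simply connected, and by Proposition \ref{prop:FixedLoci} every non-trivial $g \in G$ has non-empty fixed locus, so $\pi_1(K/G) = 1$.

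\emph{Symplectic resolution.} By Propositions \ref{prop:singularities} and \ref{prop:resolution}, $Y_K \to K/G$ is a smooth resolution with local model $(\mathrm{Bl}_0(\CC^2)/\iota)^j \times (\CC^2)^{3-j}$, so I need to check that $\bar\omega$ extends to a global nowhere-degenerate form on $Y_K$. This reduces to a single local computation on $\mathrm{Bl}_0(\CC^2)/\iota$: in a chart of $\mathrm{Bl}_0(\CC^2)$ with coordinates $(u,v)$ satisfying $x=u$, $y=uv$, the involution $\iota$ acts as $(u,v)\mapsto (-u,v)$ with smooth quotient $\CC^2$ of coordinates $(u^2, v)$, and $dx \wedge dy = u\, du \wedge dv = \tfrac{1}{2} d(u^2) \wedge dv$ extends to the standard (nowhere-degenerate) form on the quotient. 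Taking products with the smooth factor $(\CC^2)^{3-j}$ via Lemma \ref{lem:blowuplocal} gives the required global extension of $\bar\omega$ to $Y_K$.

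\emph{Conclusion and obstacle.} Combining the two previous steps, Fujiki's criterion applies and $Y_K$ is hyper-Kähler. (Equivalently: by Kollár's theorem on fundamental groups of resolutions of rational singularities, $\pi_1(Y_K) = \pi_1(K/G) = 1$, and Hartogs extension together with the preceding identification gives $h^{2,0}(Y_K) = 1$ spanned by the extended symplectic form.) The only substantive technical point is the non-degenerate extension of $\bar\omega$ across the exceptional divisor, and this is handled uniformly by the single local computation on $\mathrm{Bl}_0(\CC^2)/\iota$ above together with the product local structure from Lemma \ref{lem:blowuplocal}; no global obstruction arises.
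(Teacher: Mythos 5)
Your proposal is correct and follows essentially the same route as the paper: both reduce the statement to Fujiki's criterion for resolutions of primitively symplectic $V$-manifolds, using the local product structure established in Propositions \ref{prop:singularities} and \ref{prop:resolution}. The only differences are cosmetic --- you carry out the local extension of the symplectic form across the $A_1$-resolution explicitly instead of citing \cite[Proposition 2.9]{fujiki1983} as a black box, and you obtain simple connectedness via Armstrong/Koll\'ar rather than \cite[Lemma 1.2]{fujiki1983}.
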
 
\begin{proof}
	The singular space $K/G$ is a primitively symplectic V-manifold in the sense of Fujiki. According to his \cite[Proposition 2.9]{fujiki1983}, to show that $Y_K$ admits a symplectic form it suffices to check that the following two conditions are satisfied:
	\begin{enumerate}[label=(\roman*)]
		\item for each component $X_j$ of the singular locus of $K/G$, a general point $x\in X_j$ has a neighborhood $U_x = A \times V_x$ in $K/G$, where $A$ is a surface and $V_x=U_x\cap X_j$ is a smooth neighborhood of $x$ in $X_j$;
		\item the restriction of $p\colon Y_K\to K/G$ to the preimage of $U_x$ is the product $$p'\times \mathrm{id}\colon \tilde{A}\times V_x \to A\times V_x, $$ where $p'$ is the minimal resolution of $A$.	
	\end{enumerate} 
	We have already shown in the course of proof of Propositions \ref{prop:singularities} and \ref{prop:resolution} that these conditions hold, with $A$ a neighborhood of the ordinary node $0\in \CC^2/\iota$, whose minimal resolution is the blow-up of the singular point.
	
	Hence, $Y_K$ is symplectic. From its description as the quotient $\mathrm{Bl}_Z(K)/G$ we obtain $h^{2,0}(Y_K)=1$ (see also \S\ref{subsec:associateK3}). Finally, $Y_K$ is simply connected by \cite[Lemma 1.2]{fujiki1983}.
\end{proof}

We can now complete the proof of our main result.

\begin{proof}[Proof of Theorem \ref{thm:MainResult}]
 	Let $\mathcal{K}\to B$ be a smooth proper family of manifolds of $\mathrm{Kum}^3$-type. Up to a finite \'etale base-change, $G$ acts fibrewise on $\mathcal{K}$, and the fixed loci of automorphisms in $G$ are smooth families over $B$. By Theorem \ref{thm:fixedLoci}, the union $\mathcal{Z}$ of $\mathcal{K}^g$ for~$1\neq g\in G$ has $16$ components $\mathcal{Z}_i$, each of which is a smooth family of manifolds of~$\mathrm{K}3^{[2]}$-type over $B$. The blow-up of $\mathcal{K}$ along $\mathcal{Z}$ is a smooth family $\widetilde{\mathcal{K}}$ over~$B$, and the action of $G$ extends to a fibrewise action on $\widetilde{\mathcal{K}}$. 
 	By Propositions \ref{prop:resolution} and \ref{prop:symplectic}, the quotient $\mathcal{Y}=\widetilde{\mathcal{K}}/G$ is a smooth proper family of hyper-K\"{a}hler manifolds over $B$, with fibre over the point $b\in B$ the manifold $Y_{\mathcal{K}_b}$.
 	It thus suffices to find a single $K$ of $\mathrm{Kum}^3$-type such that $Y_K$ is hyper-K\"{a}hler of $\mathrm{K}3^{[3]}$-type.
 	
 	Consider the sixfold $K_J(v_3)$ introduced in Section \ref{sec:specialCase}, which is constructed from a Beauville-Mukai system on a principally polarized abelian surface $J$. By Theorem~\ref{thm:main}, in this case $Y_{K_J(v_3)}$ is birational to the Hilbert scheme $\Km(J)^{[3]}$ on the Kummer K3 surface associated to $J$. 
 	By~\cite[Theorem 4.6]{Huy99}, birational hyper-K\"{a}hler manifolds are deformation equivalent, and therefore $Y_{K_J(v_3)}$ is of $\mathrm{K}3^{[3]}$-type.
\end{proof}

\subsection{The associated $\mathrm{K}3$ surface}\label{subsec:associateK3}
The following computation is entirely analogous to \cite[\S3]{floccari22}. If $Z$ is a compact K\"{a}hler manifold, we let $H^2_{\mathrm{tr}}(Z, \ZZ)$ denote the smallest sub-Hodge structure of $H^2(Z, \ZZ)$ whose complexification contains $H^{2, 0}(Z)$. Let $K$ be a manifold of $\mathrm{Kum}^3$-type, and denote by $\widetilde{K}$ the blow-up of $K$ along $\bigcup_{1\neq g\in G} K^g$. 
By Proposition \ref{prop:resolution} we have a commutative diagram
\[
\begin{tikzcd}
	& \widetilde{K} \arrow[swap]{ld}{p'} \arrow{rd}{q'}\\
	K \arrow[swap]{rd}{q} \arrow[dashed]{rr}{r} && Y_K \arrow{dl}{p}\\
	 & K/G
\end{tikzcd}
\] 
where $p, p'$ are blow-up maps and $q, q'$ are the quotient maps for the action of $G$. 

The pull-back gives an isomorphism ${p'}^*\colon H^2_{\mathrm{tr}}(K, \ZZ) \to H^2_{\mathrm{tr}}(\widetilde{K}, \ZZ)$. Since $G$ acts trivially on $H^2(K, \ZZ)$, the pushforward $q'_*\colon H^2_{\mathrm{tr}}(\widetilde{K}, \ZZ)\to H^2_{\mathrm{tr}}(Y_K,\ZZ)$ is injective, and ${q'}^*q'_*$ is multiplication by $2^5$ on $H^2_{\mathrm{tr}}(\widetilde{K}, \ZZ)$. It follows that 
\[
r_*\colon H^2_{\mathrm{tr}}(K, \ZZ) \to H^2_{\mathrm{tr}}(Y_K, \ZZ)
\]
becomes an isomorphism of Hodge structures after tensoring with $\QQ$. 
Denote by $q_K$ and $q_{Y_K}$ the Beauville-Bogomolov forms on $H^2(K,\ZZ)$ and $H^2(Y_K,\ZZ)$ respectively.

\begin{lemma}\label{lem:intersectionForm}
	For any $x\in H_{\mathrm{tr}}^2(K,\ZZ)$ we have $ q_{Y_K} (r_*(x), r_*(x)) = 2^9 q_K(x, x) $.  
	Therefore,  
	\[
	\frac{1}{16}r_*\colon  H^2_{\mathrm{tr}}(K, \QQ)(2) \xrightarrow{ \ \sim \ } H^2_{\mathrm{tr}}(Y_K, \QQ),
	\]
	is a rational Hodge isometry, where $(2)$ indicates that the form is multiplied by $2$.
\end{lemma}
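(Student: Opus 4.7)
The plan is to compute the top self-intersection $\int_{Y_K}(r_*x)^6$ in two different ways—once via the correspondence supplied by the diagram of blow-up and quotient, and once via Fujiki's relation on each of the two hyper-K\"ahler types—then compare the resulting cubic expressions in the Beauville-Bogomolov forms and extract a cube root.

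Write $r_* = q'_*\circ {p'}^*$. Using the standard identity ${q'}^*q'_* = \sum_{g\in G} g^*$ on $H^*(\widetilde{K})$ together with the fact that $G$ acts trivially on $H^2(K,\ZZ)$, and hence on ${p'}^*H^2(K,\ZZ)\subset H^2(\widetilde{K},\ZZ)$, one gets
\[
{q'}^*(r_* x)\;=\;\sum_{g\in G} g^*({p'}^*x)\;=\;|G|\cdot {p'}^*x \;=\; 32\cdot {p'}^*x.
\]
Combined with $q'_*{q'}^*=|G|\cdot\mathrm{id}$ on $H^*(Y_K)$ and the projection formula $p'_*(({p'}^*x)^6)=x^6$ for the birational morphism $p'$, this gives
\[
\int_{Y_K}(r_* x)^6 \;=\; \tfrac{1}{|G|}\int_{\widetilde{K}}({q'}^* r_* x)^6 \;=\; \tfrac{32^6}{32}\int_{\widetilde{K}}({p'}^*x)^6 \;=\; 32^5\int_K x^6.
\]

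Now apply Fujiki's relation, which states $\int_X \alpha^{2n}= c_X\cdot q_X(\alpha,\alpha)^n$ with $c_X$ depending only on the deformation type of $X$; after Beauville one has $c_{\mathrm{Kum}^n}/c_{\mathrm{K}3^{[n]}}=n+1$, so here the ratio is $4$. Substituting Fujiki on both sides of the identity above and cancelling the common factor $c_{\mathrm{K}3^{[3]}}$ yields
\[
q_{Y_K}(r_* x, r_* x)^3 \;=\; 32^5 \cdot 4 \cdot q_K(x,x)^3 \;=\; (2^9)^3\, q_K(x,x)^3.
\]
Since cube roots are unique on the reals, this gives $q_{Y_K}(r_*x, r_*x) = 2^9\, q_K(x,x)$. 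Dividing $r_*$ by $16$ rescales the left-hand form by $2^{-8}$, so one obtains $q_{Y_K}\bigl(\tfrac{1}{16}r_* x,\tfrac{1}{16}r_* x\bigr) = 2\, q_K(x,x)$, which is precisely the form $q_K(2)$; the bilinear isometry statement then follows by polarising this quadratic identity and combining it with the isomorphism of rational Hodge structures already established just before the lemma.

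The only real bookkeeping is to verify the Fujiki ratio and to confirm that $p'^*$ produces no correction coming from exceptional classes on $H^2_{\mathrm{tr}}$; the latter is automatic since the exceptional divisors of $p'$ are algebraic, so $\mathrm{coker}({p'}^*)$ is orthogonal to the transcendental part. I do not foresee a deeper obstacle.
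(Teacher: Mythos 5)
Your proposal is correct and follows essentially the same route as the paper: the paper packages your computation through $\widetilde{K}$ into the single identity $r^*r_*x = 2^5 x$ for $G$-invariant $x$, computes $\int_{Y_K}(r_*x)^6 = 2^{25}\int_K x^6$, and then invokes the Fujiki constants $c_K=60$, $c_{Y_K}=15$ (whose ratio is your $n+1=4$) to conclude $q_{Y_K}(r_*x,r_*x)=2^9 q_K(x,x)$.
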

\begin{proof}
	Let $c_K$ and $c_Y$ be the Fujiki constants of $K$ and $Y$ respectively (\cite{fujiki1987}). This means that we have $\int_K x^6 = c_K q_K(x,x)^3$ for any $x\in H^2(K, \ZZ)$, and similarly for $Y_K$. 
	Let $x\in H_{\mathrm{tr}}^2(K, \ZZ)$; since $x$ is $G$-invariant, we have $r^*(r_* x) = 2^5 x$. We compute:
	\begin{align*}
		q_{Y_K}(r_*x, r_*x)^3 & = \frac{1}{c_{Y_K}}\int_{Y_K} (r_*x)^6 \\
		&
		 = \frac{1}{2^5 c_{Y_K}} \int_K (r^*r_* x)^6 
		 \\
		& = \frac{1}{2^5 c_{Y_K}} \int_K (2^5 x)^6 \\
		& =\frac{2^{25}c_K }{c_{Y_K}} q_K(x,x)^3.
	\end{align*}
	By \cite{rapagnetta2008beauville}, we have $c_{Y_K}= 15$ and $c_K=60$. Hence $q_{Y_K}(r_*x, r_*x)=2^{9} q_K(x,x)$, so $r_*/2^4$ multiplies the form by $2$ and yields the claimed rational Hodge isometry. 
	\end{proof}

We now prove that every projective $K$ of $\mathrm{Kum}^3$-type has a naturally associated K3 surface.

\begin{proof}[Proof of Theorem \ref{thm:AssociatedK3}]
	Let $K $ be a projective variety of $\mathrm{Kum}^3$-type and let $Y_K$ be the crepant resolution of $K/G$. By the above lemma, the transcendental lattice $H^2_{\mathrm{tr}}(Y_K,\ZZ)$ is an even lattice of signature $(2, k)$, and rank at most $6$. By \cite[Corollary 2.10]{Morrison1984}, there exists a $\mathrm{K}3$ surface $S_K$ such that $H^2_{\mathrm{tr}}(S_K,\ZZ) $ is Hodge isometric to $H^2_{\mathrm{tr}}(Y_K,\ZZ)$. A criterion due to independently Mongardi-Wandel \cite{mongardi2015} and Addington \cite[Proposition~4]{Addington2016} ensures that $Y_K$ is birational to $M_{S_{K},H}(v)$, for some primitive Mukai vector $v$ and a $v$-generic polarization $H$ on $S_K$. 
	The surface $S_K$ is uniquely determined up to isomorphism, because	two $\mathrm{K}3$ surfaces of Picard rank at least $12$ with Hodge isometric transcendental lattices are isomorphic, see \cite[Chapter 16, Corollary~3.8]{huyK3}
\end{proof}

\begin{remark}\label{rmk:4families}
	The K3 surfaces $S_K$ come in countably many $4$-dimensional families. A projective $K_0$ of $\mathrm{Kum}^3$-type with Picard rank $1$ gives such a family, consisting of the K3 surfaces $S$ such that $H^2_{\mathrm{tr}}(S,\ZZ)$ is a sublattice of $H^2_{\mathrm{tr}}(S_{K_0}, \ZZ)$.
\end{remark}

Up to isogeny, the K3 surfaces obtained are easily characterized as follows. Recall that $\Lambda_{\mathrm{Kum}^3}=U^{\oplus 3}\oplus \langle -8\rangle$ is the lattice $H^2(K,\ZZ)$ for $K$ of $\mathrm{Kum}^3$-type.

\begin{lemma}\label{lem:upToIsogeny}
	Let $S$ be a projective $\mathrm{K}3$ surface. The following are equivalent:
	\begin{itemize}
		\item there exists an isometric embedding of $H^2_{\mathrm{tr}}(S,\QQ)$ into $\Lambda_{\mathrm{Kum}^3}(2)\otimes_{\ZZ} \QQ$;
		\item there exist a projective variety $K$ of $\mathrm{Kum}^3$-type with associated $\mathrm{K}3$ surface $S_K$ and a rational Hodge isometry $H^2_{\mathrm{tr}}(S, \QQ) \xrightarrow{\ \sim \ } H^2_{\mathrm{tr}}(S_K, \QQ)$. 
	\end{itemize}
\end{lemma}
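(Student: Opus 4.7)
My plan is to split the equivalence into two directions: $(2)\Rightarrow(1)$ is essentially immediate from Lemma~\ref{lem:intersectionForm} and the characterization of $S_K$, whereas $(1)\Rightarrow(2)$ requires transporting the Hodge structure of $S$ to $\Lambda_{\mathrm{Kum}^3}\otimes\QQ$ along the given embedding and then realizing the resulting period by a projective manifold of $\mathrm{Kum}^3$-type via surjectivity of the period map.

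For $(2)\Rightarrow(1)$, I would chain the rational Hodge isometries
\[
H^2_{\mathrm{tr}}(S,\QQ)\cong H^2_{\mathrm{tr}}(S_K,\QQ)\cong H^2_{\mathrm{tr}}(Y_K,\QQ)\cong H^2_{\mathrm{tr}}(K,\QQ)(2),
\]
where the middle identification is the defining property of $S_K$ from Theorem~\ref{thm:AssociatedK3} and the last is Lemma~\ref{lem:intersectionForm}, and then compose with the isometric embedding $H^2_{\mathrm{tr}}(K,\QQ)\hookrightarrow H^2(K,\QQ)\cong\Lambda_{\mathrm{Kum}^3}\otimes\QQ$, after doubling the form on both sides, to land in $\Lambda_{\mathrm{Kum}^3}(2)\otimes\QQ$.

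For the harder direction $(1)\Rightarrow(2)$, given an isometric embedding $\phi\colon H^2_{\mathrm{tr}}(S,\QQ)\hookrightarrow\Lambda_{\mathrm{Kum}^3}(2)\otimes\QQ$ of rational quadratic spaces, I would endow $\Lambda_{\mathrm{Kum}^3}\otimes\QQ$ (which has the same underlying $\QQ$-vector space as $\Lambda_{\mathrm{Kum}^3}(2)\otimes\QQ$) with a polarized weight-two Hodge structure of K3 type by declaring $(\Lambda_{\mathrm{Kum}^3}\otimes\CC)^{2,0}:=\phi_{\CC}(H^{2,0}(S))$, with complex conjugate as $(0,2)$ and Beauville-Bogomolov orthogonal complement as $(1,1)$. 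The Hodge-Riemann relations on $S$, transported through the rescaled isometry $\phi$, provide the required isotropy and positivity, so this is a valid polarized K3-type Hodge structure whose transcendental part is exactly $\phi(H^2_{\mathrm{tr}}(S,\QQ))$. Its orthogonal complement in $\Lambda_{\mathrm{Kum}^3}\otimes\QQ$ lies entirely in $(1,1)$; since $\Lambda_{\mathrm{Kum}^3}$ has signature $(3,4)$ and $H^2_{\mathrm{tr}}(S,\QQ)$ has signature $(2,k)$ with $k\leq 4$, this complement has signature $(1,4-k)$ and thus contains a rational class of positive square. Clearing denominators produces an integral class $\ell\in\Lambda_{\mathrm{Kum}^3}$ of type $(1,1)$ with $q(\ell)>0$.

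The surjectivity of the period map for hyper-K\"ahler manifolds of $\mathrm{Kum}^3$-type \cite{Huy99} then produces a marked manifold $K$ of $\mathrm{Kum}^3$-type realizing this Hodge structure, and the integral class $\ell$ forces $K$ to be projective by Huybrechts' projectivity criterion. By construction $H^2_{\mathrm{tr}}(K,\QQ)\cong\phi(H^2_{\mathrm{tr}}(S,\QQ))$ as rational sub-Hodge structures of $\Lambda_{\mathrm{Kum}^3}\otimes\QQ$, whence Lemma~\ref{lem:intersectionForm} and the definition of $S_K$ give $H^2_{\mathrm{tr}}(S_K,\QQ)\cong H^2_{\mathrm{tr}}(K,\QQ)(2)\cong H^2_{\mathrm{tr}}(S,\QQ)$. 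The main delicate step I anticipate is this combination of surjectivity with projectivity: one must verify that the marked manifold realizing our abstract period can be chosen projective, which is precisely what the signature argument for the orthogonal complement guarantees via the integral $(1,1)$-class $\ell$ of positive Beauville-Bogomolov square.
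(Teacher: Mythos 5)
Your proposal is correct and follows essentially the same route as the paper: the forward direction is the chain of Hodge isometries through $S_K$, $Y_K$ and $K$ via Lemma~\ref{lem:intersectionForm}, and the converse transports the Hodge structure of $S$ onto (a primitive sublattice of) $\Lambda_{\mathrm{Kum}^3}$ via the embedding, invokes surjectivity of the period map, and concludes projectivity from Huybrechts' criterion using the signature count. Your explicit construction of the positive integral $(1,1)$-class is just an unpacking of that criterion, which the paper cites directly.
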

\begin{proof}
	The second assertion implies the first thanks to Lemma \ref{lem:intersectionForm}.
	Conversely, let $\Phi\colon H^2_{\mathrm{tr}}(S,\QQ) \hookrightarrow \Lambda_{\mathrm{Kum}^3}(2)\otimes_{\ZZ} \QQ$ be an isometric embedding. Choose a primitive sublattice $T\subset \Lambda_{\mathrm{Kum}^3}$ such that $T(2)\otimes_{\ZZ} \QQ$ coincides with the image of $\Phi$. Equip $T$ with the Hodge structure induced by that on $H^2_{\mathrm{tr}}(S, \QQ)$ via $\Phi$. 
	By the surjectivity of the period map \cite{Huy99}, there exists a manifold $K$ of $\mathrm{Kum}^3$-type such that $H^2_{\mathrm{tr}}(K, \ZZ)$ is Hodge isometric to $T$, and Lemma \ref{lem:intersectionForm} gives a Hodge isometry $H^2_{\mathrm{tr}}(S,\QQ)\xrightarrow{\ \sim \ } H^2_{\mathrm{tr}}(S_K,\QQ)$. Since the signature of $T$ is necessarily $(2, k)$ with $k\leq 4$, Huybrechts' projectivity criterion \cite{Huy99} implies that $K$ is projective.
\end{proof}

\section{Applications to the Hodge conjecture}\label{sec:KSHodge}

In this section we prove Theorems \ref{thm:application1} and \ref{thm:application2}. Throughout, all cohomology groups are taken with rational coefficients, which are thus suppressed from the notation.

We start with a simple observation. Let $X, Y$ be smooth projective varieties and let $\phi\colon H^2(X)\to H^2(Y)$ be a morphism of Hodge structures.

\begin{lemma}
	The morphism $\phi$ is induced by an algebraic correspondence if and only if its restriction to $H^{2}_{\mathrm{tr}}(X)$ is so.
\end{lemma}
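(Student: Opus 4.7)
The plan is to reduce everything to the standard fact that the orthogonal projector $\pi_{\mathrm{tr}}\colon H^2(X)\to H^2_{\mathrm{tr}}(X)$ is induced by an algebraic correspondence on $X\times X$. With $n=\dim X$ and $h\in \NS(X)$ an ample class, the pairing $\langle \alpha,\beta\rangle \coloneqq \int_X \alpha\cup \beta\cup h^{n-2}$ makes $H^2(X)=H^2_{\mathrm{tr}}(X)\oplus \NS(X)_{\QQ}$ an orthogonal direct sum. Choosing a basis $D_1,\dots,D_\rho$ of $\NS(X)_{\QQ}$ and letting $(a_{ij})$ be the inverse of the nondegenerate matrix $(\langle D_i,D_j\rangle)$, the projector $\pi_{\NS}=\mathrm{id}-\pi_{\mathrm{tr}}$ equals $\alpha\mapsto\sum_{i,j}a_{ij}\langle \alpha,D_j\rangle\,D_i$, which is represented by the algebraic cycle $\sum_{i,j}a_{ij}(D_j\cdot h^{n-2})\times D_i$ on $X\times X$. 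Hence $\pi_{\mathrm{tr}}$ is algebraic as well.

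For the forward implication, if $\phi=\gamma_*$ for an algebraic cycle $\gamma$ on $X\times Y$, then $\phi|_{H^2_{\mathrm{tr}}(X)}$ is induced by the composition of correspondences $\gamma\circ \pi_{\mathrm{tr}}$, which remains algebraic.

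For the backward implication, suppose that some algebraic cycle $\gamma_{\mathrm{tr}}$ on $X\times Y$ induces $\phi|_{H^2_{\mathrm{tr}}(X)}$. Then $\gamma_{\mathrm{tr}}\circ \pi_{\mathrm{tr}}$ realizes $\phi$ on $H^2_{\mathrm{tr}}(X)$ and kills $\NS(X)_{\QQ}$, so it remains to produce an algebraic correspondence that induces $\phi|_{\NS(X)_{\QQ}}$ and vanishes on $H^2_{\mathrm{tr}}(X)$. Since $\phi$ respects the Hodge decomposition and $\NS(X)_{\QQ}$ sits in type $(1,1)$, we have $\phi(\NS(X)_{\QQ})\subset H^{1,1}(Y)\cap H^2(Y,\QQ)=\NS(Y)_{\QQ}$ by the Lefschetz $(1,1)$-theorem. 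Thus each $\phi(D_i)$ is the class $[E_i]$ of a $\QQ$-divisor $E_i$ on $Y$, and the cycle $\sum_{i,j}a_{ij}(D_j\cdot h^{n-2})\times E_i$ on $X\times Y$ does the job. Summing the two correspondences yields the desired algebraic cycle inducing $\phi$.

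The argument is entirely formal, and I do not expect any real obstacle; the only non-trivial inputs are the standard algebraicity of $\pi_{\mathrm{tr}}$ together with the Lefschetz $(1,1)$-theorem, which ensures that the image of $\NS(X)_{\QQ}$ under any morphism of Hodge structures lands in the subspace of $H^2(Y)$ spanned by divisor classes.
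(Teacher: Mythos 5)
Your proof is correct and follows essentially the same route as the paper: decompose $H^2$ into its transcendental part and the span of divisor classes, and observe that the "algebraic-to-algebraic" block of the correspondence is itself algebraic. The only difference is one of explicitness --- where the paper simply notes that $H^2_{\mathrm{alg}}(X)^{\vee}\otimes H^2_{\mathrm{alg}}(Y)$ consists of algebraic classes, you write down the cycles $\sum_{i,j}a_{ij}\bigl(D_j\cdot h^{n-2}\bigr)\times E_i$ realizing the projector and the divisor block, which is a welcome (and correct) elaboration of the same idea.
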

\begin{proof}
There is a decomposition $H^2(X)=H^2_{\mathrm{tr}}(X) \oplus H^2_{\mathrm{alg}}(X)$, where $H^2_{\mathrm{alg}}(X)$ is spanned by cycle classes of divisors. Similarly, $H^2(Y)= H^2_{\mathrm{tr}}(Y) \oplus H^2_{\mathrm{alg}}(Y)$. 
Any morphism $H^2_{\mathrm{tr}}(X)\to H^2_{\mathrm{alg}}(Y)$ or $H^2_{\mathrm{alg}}(X)\to H^2_{\mathrm{tr}}(Y)$ of Hodge structures is trivial. Hence, $\phi$ gives a Hodge class $$\phi_{\mathrm{tr}} \oplus \phi_{\mathrm{alg}} \ \in\   \bigl(H^2_{\mathrm{tr}}(X)^{\vee} \otimes H^2_{\mathrm{tr}}(Y) \bigr) \oplus \bigl(H^2_{\mathrm{alg}}(X)^{\vee} \otimes H^2_{\mathrm{alg}}(Y)\bigr). $$
The lemma follows because the second summand consists of algebraic classes.
\end{proof}

\begin{proof}[Proof of Theorem \ref{thm:application2}]
	Let $K$ and $K'$ be projective varieties of $\mathrm{Kum}^3$-type and assume that $f\colon H^2(K) \xrightarrow{ \ \sim \ } H^2(K')$ is a rational Hodge isometry. By the above lemma, it suffices to show that the component $f_{\mathrm{tr}}\colon H_{\mathrm{tr}}^2(K) \xrightarrow{\ \sim \ } H_{\mathrm{tr}}^2(K')$ is algebraic. 
	
	Let $Y$, $Y'$ be the varieties of $\mathrm{K}3^{[3]}$-type given by Theorem \ref{thm:MainResult} applied to $K$ and~$K'$ respectively, and let $r\colon K\dashrightarrow Y$ and $r'\colon K'\dashrightarrow Y'$ be the corresponding rational maps. They induce isomorphisms of Hodge structures $r_*\colon H^2_{\mathrm{tr}}(K) \xrightarrow{\ \sim \ } H_{\mathrm{tr}}^2(Y)$ and $r'_*\colon H^2_{\mathrm{tr}}(K') \xrightarrow{\ \sim \ } H_{\mathrm{tr}}^2(Y')$. Let $\bar{f}_{\mathrm{tr}}$ be defined as 
	$$\bar{f}_{\mathrm{tr}} \coloneqq r'_* \circ f_{\mathrm{tr}}\circ (r_*)^{-1} \colon H^2_{\mathrm{tr}}(Y) \to H^2_{\mathrm{tr}}(Y') $$ 
	The inverse of $r_*$ is $r^*/2^5$, and similarly for $r'$. It follows that $f_{\mathrm{tr}}$ is algebraic if and only if $\bar{f}_{\mathrm{tr}}$ is so. 
	By Lemma~\ref{lem:intersectionForm}, the map $\bar{f}_{\mathrm{tr}}$ is a rational Hodge isometry, and hence it is algebraic by Markman's theorem in \cite{markmanRational}.
\end{proof}

\subsection{The Kuga-Satake correspondence} 

The Kuga-Satake construction associates an abelian variety to any polarized Hodge structure of $\mathrm{K}3$-type. We briefly recall this construction, referring to \cite{vanGeemen} and \cite[Chapter 4]{huyK3} for more details. 

Let $(V, q)$ be an effective polarized $\QQ$-Hodge structure of weight $2$ with $h^{2,0}(V)=1$. 
The Clifford algebra $C(V)$ is defined as the quotient
\[
C(V)\coloneqq \bigoplus_{k\geq 0} V^{\otimes k}/\langle v\otimes v - q(v,v) 1 \rangle_{v\in V}.
\]
As a $\QQ$-vector space, $C(V)\cong \bigwedge^{\bullet}(V)$; hence $\dim (C(V))=2^{\mathrm{rk}(V)}$.
The Clifford algebra is $\ZZ/2\ZZ$-graded, $C(V)=C^+(V)\oplus C^-(V)$.
In \cite{deligne1971conjecture}, Deligne shows that the Hodge structure of $V$ induces a Hodge structure of weight $1$ on $C^+(V)$ of type $(1,0), (0,1)$, which admits a polarization and hence defines an abelian variety~$\mathrm{KS}(V)$ up to isogeny. This $2^{\mathrm{rk}(V)-2}$-dimensional abelian variety is called the Kuga-Satake variety of $V$. 
Upon fixing some $v_0\in V$, the action of $V$ on $C(V)$ via left multiplication induces an embedding of weight $0$ rational Hodge structures
\[
V(1) \hookrightarrow \End(C^+(V)) = H^1(\mathrm{KS}(V))^{\vee}\otimes H^1(\mathrm{KS}(V))
\]
which maps $v$ to the endomorphism $w\mapsto vwv_0$.

\begin{remark}\label{rmk:trivialRemarks}
	If $V'\subset V$ is a sub-Hodge structure such that ${V'}^{\bot} $ consists of Hodge classes, then $\mathrm{KS}(V)$ is isogenous to a power of $\mathrm{KS}(V')$. Replacing the form $q$ with a non-zero rational multiple results in isogenous Kuga-Satake varieties. See \cite[Chapter~4, Example 2.4 and Proof of Proposition 3.3]{huyK3}
\end{remark}

Let $X$ be a projective hyper-K\"{a}hler variety of dimension $2n$. The Kuga-Satake variety $\mathrm{KS}(X)$ of $X$ is the abelian variety obtained from $(H^2_{\mathrm{tr}}(X), q_X)$ via the Kuga-Satake construction, where $q_X$ is the restriction of the Beauville-Bogomolov form.
Identifying $H^1(\mathrm{KS}(X))$ with its dual, there exists an embedding of Hodge structures of~$H^2_{\mathrm{tr}}(X)$ into $H^2(\mathrm{KS}(X)\times \mathrm{KS}(X) )$. 
According to the Hodge conjecture this embedding should be induced by an algebraic cycle.

\begin{conjecture}[Kuga-Satake Hodge conjecture] \label{conj:kugaSatake}
	Let $X$ be a projective hyper-K\"{a}hler variety. There exists an algebraic cycle $\zeta$ on $X\times \mathrm{KS}(X) \times \mathrm{KS}(X)$ such that the associated correspondence induces an embedding of Hodge structures 
	$$\zeta_*\colon H^2_{\mathrm{tr}}(X) \hookrightarrow H^2(\mathrm{KS}(X)\times \mathrm{KS}(X) ).$$
\end{conjecture}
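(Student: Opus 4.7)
The plan is to reduce the conjecture to a single projective example in each deformation type of hyper-K\"ahler manifolds, using Verbitsky's theory of hyperholomorphic sheaves and its development by Markman \cite{markman2019monodromy}. The underlying mechanism is that on a locally complete polarized deformation family $\mathcal{X}\to B$ together with its relative Kuga-Satake family $\mathcal{KS}(\mathcal{X})\to B$, a Hodge class of Kuga-Satake type on $X_0\times \mathrm{KS}(X_0)^2$ which is the Chern character of a hyperholomorphic coherent sheaf deforms sideways as an algebraic class on every fiber. Granting this, Conjecture \ref{conj:kugaSatake} reduces to exhibiting, for each deformation type, one projective example on which the Kuga-Satake cycle is algebraic and realized by such a hyperholomorphic sheaf.

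For the four currently known deformation types the required examples can be produced as follows. The $\mathrm{K}3^{[n]}$ and $\mathrm{Kum}^n$ cases were settled by Voisin \cite{voisinfootnotes}, building on \cite{O'G21} and Markman: one takes Hilbert schemes of Kummer K$3$ surfaces (where algebraicity of the Kuga-Satake cycle is classical by Morrison, since $\rho\geq 17$) and generalized Kummer varieties of abelian surfaces, and verifies the hyperholomorphic property directly. For $\mathrm{OG}6$, the Mongardi-Rapagnetta-Sacc\`a rational double cover $\overline{\Pic}^d(\mathcal{D}/\PP^3)\dashrightarrow K_J(v_{2d})$ recalled in Section \ref{sec:specialCase} realizes an $\mathrm{OG}6$ manifold as the quotient of a $\mathrm{K}3^{[3]}$-type variety by a birational symplectic involution; the resulting inclusion of transcendental Hodge structures combined with the compatibility of the Kuga-Satake construction with finite correspondences (Remark \ref{rmk:trivialRemarks}) transports the algebraic Kuga-Satake cycle from the cover to the quotient. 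For $\mathrm{OG}10$, the strategy is to realize the variety as the crepant resolution of O'Grady's singular moduli space of sheaves on a suitably chosen K$3$ surface $S$, identify its transcendental cohomology with a sub-Hodge structure of $H^2_{\mathrm{tr}}(S)$ up to algebraic summands, and pull back the Kuga-Satake cycle from~$S$ (or from $S^{[5]}$ via Voisin's result) through the induced algebraic correspondence.

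The principal obstacle is that the classification of deformation types of projective hyper-K\"ahler manifolds is not known to be complete, so the scheme above is conditional on the four listed families being exhaustive. A secondary, and genuinely technical, difficulty lies in the $\mathrm{OG}10$ case: one must verify that the Kuga-Satake cycle transported from the K$3$ side is realized by a hyperholomorphic sheaf on the resolution, which requires an explicit hold on the Kuga-Satake package for~$\mathrm{OG}10$ beyond what is currently available. An unconditional approach, bypassing classification, would presumably require the development of a theory of absolute Hodge cycles on hyper-K\"ahler manifolds in the spirit of Deligne's treatment of K$3$ surfaces, which appears out of reach at present.
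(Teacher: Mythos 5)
The statement you are asked about is labeled a \emph{Conjecture} in the paper, and the paper does not prove it --- nor is it known in general. The paper only establishes new instances of it: Theorem \ref{thm:application1} proves it for those K3 surfaces whose transcendental lattice embeds isometrically into $\Lambda_{\mathrm{Kum}^3}(2)\otimes\QQ$, by transporting the known case of $\mathrm{Kum}^n$-type varieties (due to Voisin, building on O'Grady and Markman) through the algebraic correspondence $K\dashrightarrow Y_K$ constructed in Theorem \ref{thm:MainResult}. So there is no ``paper's own proof'' to compare against; any purported proof of the full conjecture should be treated with suspicion.

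Your proposal is not a proof, and it contains a concrete factual error on which the whole scheme rests. You assert that ``the $\mathrm{K}3^{[n]}$ and $\mathrm{Kum}^n$ cases were settled by Voisin.'' Only the $\mathrm{Kum}^n$ case is established in \cite{voisinfootnotes}. The $\mathrm{K}3^{[n]}$ case --- already for $n=1$, i.e.\ for K3 surfaces themselves --- is wide open once the Picard rank drops below $17$: as the introduction of this paper recalls, before this work only two families of K3 surfaces of general Picard rank $16$ were known to satisfy the conjecture, and the main application of the paper is precisely to produce infinitely many new such families. If your claim were correct, Theorem \ref{thm:application1} would be vacuous. Beyond this, the reduction ``one projective example per deformation type suffices'' is not a formal mechanism: Markman's and Verbitsky's machinery requires exhibiting the Kuga-Satake class as (a component of) the Chern character of a sheaf that is hyperholomorphic with respect to all the relevant hyper-K\"ahler structures, and verifying this is the hard content of each known case, not a routine deformation argument. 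Finally, as you acknowledge yourself, the argument is conditional on the unproven completeness of the classification of deformation types and on unestablished technical input for $\mathrm{OG}10$. In short: the statement is an open conjecture, the paper treats it as such, and your sketch neither proves it nor correctly identifies which cases are currently known.
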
 

\begin{remark}
	For a K3 surface $X$, the above form of the conjecture is equivalent to \cite[\S10.2]{vanGeemen}. To see this, let $\mathsf{Mot}$ be the category of Grothendieck motives over~$\CC$, in which morphisms are given by algebraic cycles modulo homological equivalence, see \cite{andre}. The motive $\h(X)\in \mathsf{Mot}$ of $X$ decomposes as the sum of its transcendental part~$\h_{\mathrm{tr}}(X)$ and some motives of Hodge-Tate type.
	Since the standard conjectures hold for $X$ and $\mathrm{KS}(X)$, the tensor subcategory of $\mathsf{Mot}$ generated by their motives is abelian and semisimple \cite[Theorem~4.1]{Ara06}. 
	Therefore, Conjecture~\ref{conj:kugaSatake} and the formulation of van Geemen \cite[\S10.2]{vanGeemen} are both equivalent to~$\h^2_{\mathrm{tr}}(X)$ being a direct summand of $\h^2(\mathrm{KS}(X)\times\mathrm{KS}(X))$ in the category $\mathsf{Mot}$.
\end{remark}

We will use the following easy lemma.
\begin{lemma}\label{lem:KSisogeny}
	Let $X$ and $Z$ be projective hyper-K\"{a}hler varieties. Assume that there exists an algebraic cycle $\gamma$ on $Z\times X$ which induces a rational Hodge isometry
	\[
	\gamma_*\colon H^2_{\mathrm{tr}}(Z)\xrightarrow{\ \sim \ } H^2_{\mathrm{tr}}(X)(k),
	\]
	for some non-zero $k\in\QQ$. Then, if Conjecture \ref{conj:kugaSatake} holds for $X$, it holds for $Z$ as well.
\end{lemma}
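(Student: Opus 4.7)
The plan is to reduce Conjecture~\ref{conj:kugaSatake} for $Z$ to that for $X$ by transporting the Kuga-Satake cycle for $X$ back to $Z$, using $\gamma$ on one side and an algebraic isogeny between the Kuga-Satake varieties on the other. The whole argument is formal, exploiting the functoriality of the Kuga-Satake construction recorded in Remark~\ref{rmk:trivialRemarks}.

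First I would observe that by Remark~\ref{rmk:trivialRemarks}, a rational Hodge isometry $H^2_{\mathrm{tr}}(Z)\xrightarrow{\sim} H^2_{\mathrm{tr}}(X)(k)$ with $k\neq 0$ gives an isomorphism of weight-$1$ rational Hodge structures $C^+(H^2_{\mathrm{tr}}(Z))\cong C^+(H^2_{\mathrm{tr}}(X))$, up to a rescaling of the form that only affects the polarization. Hence the abelian varieties $\mathrm{KS}(Z)$ and $\mathrm{KS}(X)$ are isogenous. Fix an isogeny $f\colon \mathrm{KS}(Z)\to \mathrm{KS}(X)$; its graph $\Gamma_f\subset \mathrm{KS}(Z)\times \mathrm{KS}(X)$ is algebraic and induces an isomorphism $f^*\colon H^1(\mathrm{KS}(X))\xrightarrow{\sim} H^1(\mathrm{KS}(Z))$ of rational Hodge structures, and therefore an isomorphism
\[
(f\times f)^*\colon H^2(\mathrm{KS}(X)\times \mathrm{KS}(X))\xrightarrow{\ \sim \ } H^2(\mathrm{KS}(Z)\times \mathrm{KS}(Z)).
\]

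Second, by assumption there is an algebraic cycle $\zeta$ on $X\times \mathrm{KS}(X)\times \mathrm{KS}(X)$ such that $\zeta_*\colon H^2_{\mathrm{tr}}(X)\hookrightarrow H^2(\mathrm{KS}(X)\times \mathrm{KS}(X))$ is an embedding of Hodge structures. I would then form the composition of algebraic correspondences
\[
\delta \coloneqq {}^t(\Gamma_f\times \Gamma_f) \circ \zeta \circ \gamma,
\]
which is an algebraic cycle on $Z\times \mathrm{KS}(Z)\times \mathrm{KS}(Z)$. Its action on cohomology factors as
\[
\delta_*=(f\times f)^* \circ \zeta_* \circ \gamma_*\colon H^2_{\mathrm{tr}}(Z)\longrightarrow H^2(\mathrm{KS}(Z)\times \mathrm{KS}(Z)).
\]
Since $\gamma_*$ is an isomorphism (it is a Hodge isometry up to scaling onto $H^2_{\mathrm{tr}}(X)$), $\zeta_*$ is an embedding by hypothesis, and $(f\times f)^*$ is an isomorphism, the composition $\delta_*$ is an embedding of rational Hodge structures, establishing Conjecture~\ref{conj:kugaSatake} for $Z$.

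There is no real obstacle to this plan; the only points requiring care are bookkeeping the composition of correspondences and recalling that an isogeny of abelian varieties is algebraic and induces an isomorphism on rational cohomology. The conceptual content is entirely in Remark~\ref{rmk:trivialRemarks}, which guarantees that the Kuga-Satake construction is functorial (up to isogeny) under rational Hodge isometries and rescaling of the polarization.
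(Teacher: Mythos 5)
Your proposal is correct and follows essentially the same route as the paper: invoke Remark~\ref{rmk:trivialRemarks} to obtain an algebraic isogeny between the Kuga--Satake varieties, then compose the correspondences $\gamma$, $\zeta$, and (the transpose of) the graph of the isogeny to produce the required cycle on $Z\times \mathrm{KS}(Z)\times\mathrm{KS}(Z)$. The only cosmetic difference is the direction of the isogeny (you pull back along $f\colon\mathrm{KS}(Z)\to\mathrm{KS}(X)$ where the paper pushes forward along $\phi\colon\mathrm{KS}(X)\to\mathrm{KS}(Z)$), which changes nothing.
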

\begin{proof}
	By Remark \ref{rmk:trivialRemarks} there exists an isogeny $\phi \colon \mathrm{KS}(X)\to\mathrm{KS}(Z)$ of Kuga-Satake varieties. It induces an isomorphism 
	\[
	\phi_*\colon H^2(\mathrm{KS}(X)\times \mathrm{KS}(X)) \xrightarrow{\ \sim \ } H^2(\mathrm{KS}(Z)\times \mathrm{KS}(Z))
	\]
	of rational Hodge structures.
	If Conjecture \ref{conj:kugaSatake} holds for $X$, there exists an algebraic cycle $\zeta$ on $X\times \mathrm{KS}(X) \times \mathrm{KS}(X)$ giving an embedding of Hodge structures 
	\[
	\zeta_*\colon H^2_{\mathrm{tr}}(X) \hookrightarrow H^2(\mathrm{KS}(X)\times \mathrm{KS}(X)).
	\]
 	It follows that the embedding of Hodge structure given by the composition
	\[
	\phi_* \circ \zeta_* \circ \gamma_* \colon H^2_{\mathrm{tr}}(Z) \hookrightarrow H^2(\mathrm{KS}(Z)\times \mathrm{KS}(Z))
	\]
	is induced by an algebraic cylce on $Z\times \mathrm{KS}(Z)\times \mathrm{KS}(Z)$.
\end{proof}

For convenience of the reader we restate Theorem \ref{thm:application1}.

\begin{theorem}\label{thm:KSHC}
	Let $S$ be a projective $\mathrm{K}3$ surface such that there exists an isometric embedding of $H^2_{\mathrm{tr}}(S)$ into $\Lambda_{\mathrm{Kum}^3}(2)\otimes_{\ZZ} \QQ$. 
	Then Conjecture \ref{conj:kugaSatake} holds for $S$.
\end{theorem}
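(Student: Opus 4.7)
The plan is to propagate the Kuga-Satake Hodge conjecture from manifolds of $\mathrm{Kum}^3$-type, where it is known by Voisin's theorem cited in the introduction, down to $S$ along a chain of algebraic correspondences. The mechanism for the propagation is Lemma \ref{lem:KSisogeny}; the heart of the matter is exhibiting each link in the chain as an algebraic Hodge isometry.

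First, I apply Lemma \ref{lem:upToIsogeny} to the hypothesis on $S$ to produce a projective manifold $K$ of $\mathrm{Kum}^3$-type together with a rational Hodge isometry $H^2_{\mathrm{tr}}(S) \xrightarrow{\ \sim \ } H^2_{\mathrm{tr}}(S_K)$, where $S_K$ is the K3 surface associated to $K$ via Theorem \ref{thm:AssociatedK3}. I then connect $K$ and $S$ through the chain $K \rightsquigarrow Y_K \rightsquigarrow S_K \rightsquigarrow S$ of rational Hodge isometries, and verify that each one is realized by an algebraic correspondence.

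The first correspondence is the graph of the dominant rational map $r\colon K \dashrightarrow Y_K$ from Theorem \ref{thm:MainResult}; by Lemma \ref{lem:intersectionForm}, the induced pushforward is a rational Hodge isometry $H^2_{\mathrm{tr}}(Y_K) \xrightarrow{\ \sim \ } H^2_{\mathrm{tr}}(K)(2)$, and it is algebraic by construction. The second correspondence comes from Theorem \ref{thm:AssociatedK3}: $Y_K$ is birational to a moduli space $M_{S_K,H}(v)$ of stable sheaves on $S_K$, and composing this birational equivalence with the Mukai-type correspondence supplied by a (quasi-)universal family on $S_K \times M_{S_K,H}(v)$ yields an algebraic Hodge isometry $H^2_{\mathrm{tr}}(S_K) \xrightarrow{\ \sim \ } H^2_{\mathrm{tr}}(Y_K)$. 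The third correspondence is the rational Hodge isometry $H^2_{\mathrm{tr}}(S) \xrightarrow{\ \sim \ } H^2_{\mathrm{tr}}(S_K)$ produced in the first step; after extending it arbitrarily on the N\'eron-Severi parts to a rational Hodge isometry of full $H^2$ of the two K3 surfaces, Buskin's theorem on the Shafarevich conjecture guarantees that it is induced by an algebraic cycle on $S \times S_K$.

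Applying Lemma \ref{lem:KSisogeny} three times along this chain, starting from the validity of Conjecture \ref{conj:kugaSatake} for $K$ due to Voisin, I obtain the conjecture successively for $Y_K$, $S_K$, and finally $S$. The only step in this scheme that is not essentially built into the geometric constructions of Sections \ref{sec:specialCase} and \ref{sec:AssociatedK3} is the algebraicity of the final correspondence between $S$ and $S_K$, for which Buskin's theorem is the decisive ingredient; all other steps follow from direct inspection of the explicit maps provided by Theorems \ref{thm:MainResult} and \ref{thm:AssociatedK3}.
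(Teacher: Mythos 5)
Your proposal is correct and follows essentially the same route as the paper: Lemma \ref{lem:upToIsogeny} to produce $K$ and $S_K$, the chain of algebraic Hodge isometries through $Y_K$ and $M_{S_K,H}(v)$ (Mukai's quasi-universal family, the birational map, Lemma \ref{lem:intersectionForm}), Buskin's theorem for $S\to S_K$, Voisin's theorem for $K$, and Lemma \ref{lem:KSisogeny} to transfer the conjecture; iterating that lemma three times rather than composing the isometries into a single correspondence is an immaterial difference. The only point where the paper is more careful is in noting that extracting the degree-$6$ K\"unneth component of the Mukai class requires the standard conjectures for the moduli space.
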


\begin{proof}
	By Lemma \ref{lem:upToIsogeny}, there exists a projective variety $K$ of $\mathrm{Kum}^3$-type with associated K3 surface $S_K$ and a rational Hodge isometry $t_0\colon H^2_{\mathrm{tr}}(S)\xrightarrow{\ \sim \ } H^2_{\mathrm{tr}}(S_K)$. 
	Denote by $Y_K$ the crepant resolution of $K/G$ given by Theorem \ref{thm:MainResult} and let $v, H$ be given by Theorem \ref{thm:AssociatedK3}, so that $Y_K$ is birational to the moduli space $M_{S_K,H}(v)$. Denote by $r\colon K\dashrightarrow Y_K$ the natural rational map of degree $2^5$.
	
	By \cite{Buskin}, \cite{huybrechtsMotives}, the isometry $t_0$ is algebraic. 
	Next, by \cite{mukai1987moduli}, there exists a quasi-tautological sheaf $U $ over $ S_K \times M_{S_K,H}(v)$, which means that there exists an integer~$\rho$ such that for any $F\in M_{S_K,H}(v)$ the restriction of $U$ to $S_K\times \{F\}$ is $F^{\oplus \rho}$. Consider the algebraic class $$\gamma\coloneqq \frac{1}{\rho}\mathrm{ch}(U)\cdot \mathrm{pr}^* \sqrt{\mathrm{td}_{S_K}}\in H^{\bullet}(S_K\times M_{S_K, H}(v)),$$ 
	where $\mathrm{pr}\colon S_K\times M_{S_K, H}(v) \to S_K$ is the projection. By \cite{O'G97}, its K\"{u}nneth component $\gamma_3\in H^6 (S_K\times M_{S_K, H}(v))$ induces a Hodge isometry
	$$t_1\colon H^2_{\mathrm{tr}}(S_K) \xrightarrow{\ \sim \ } H^2_{\mathrm{tr}}(M_{S_K, H}(v)).$$ 
	By \cite{CM13}, the standard conjectures holds for $M_{S_K, H}(v)$ (this also follows from~\cite{Bue18} via the arguments of \cite{Ara06}). Hence, all K\"{u}nneth components of $\gamma$ are algebraic. 
	Let now $f\colon M_{S_K, H}(v)\dashrightarrow Y_K$ be a birational map. Then the pushforward $f_*\colon H^2_{\mathrm{tr}}(M_{S_K, H}(v))\xrightarrow{\ \sim \ } H^2_{\mathrm{tr}}(Y_K)$ is a Hodge isometry. By Lemma \ref{lem:intersectionForm}, a multiple of the composition $r^* \circ f_*$ gives a Hodge isometry $t_2\colon H^2_{\mathrm{tr}}(M_{S_K, H}(v)) \xrightarrow{\ \sim \ } H^2_{\mathrm{tr}}(K)(2)$.

 	It follows that the Hodge isometry 
	\[
	t_2\circ t_1\circ t_0 \colon H^2_{\mathrm{tr}}(S)\xrightarrow{\ \sim \ } H^2_{\mathrm{tr}}(K)(2)
	\]
	is induced by an algebraic cycle on $S\times K$. The Kuga-Satake Hodge conjecture holds for $K$ by \cite{voisinfootnotes}. By Lemma \ref{lem:KSisogeny}, Conjecture~\ref{conj:kugaSatake} holds for the K3 surface~$S$ as well.
\end{proof}	

For the general projective variety $ K$ of $\mathrm{Kum}^3$-type, $H^2_{\mathrm{tr}}(K,\QQ)$ is a rank $6$ Hodge structure with
Hodge numbers $(1, 4, 1)$ and the explicit knowledge of the quadratic form allows one to show
that the Kuga Satake variety is $A^4$, where $A$ is an abelian fourfold of Weil type (cf. \cite[Theorem 9.2]{vanGeemen}). O'Grady moreover shows in \cite{O'G21} that $H^1(A,\QQ)\cong H^3(K,\QQ)$. This is a crucial ingredient in the subsequent work of Markman \cite{markman2019monodromy} and Voisin \cite{voisinfootnotes}.
O'Grady's theorem further allows us to apply a result of Varesco \cite{varesco} to prove the Hodge conjecture for all powers of the K3 surfaces appearing in Theorem \ref{thm:application1}.

\begin{corollary}\label{cor:HCpowers}
	Let $S$ be a projective $\mathrm{K}3$ surface such that there exists an isometric embedding of $H^2_{\mathrm{tr}}(S)$ into $\Lambda_{\mathrm{Kum}^3}(2)\otimes_{\ZZ} \QQ$. Then the Hodge conjecture holds for all powers of $S$.
\end{corollary}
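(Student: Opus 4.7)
The plan is to deduce Corollary \ref{cor:HCpowers} by combining the algebraicity of the Kuga-Satake correspondence supplied by Theorem \ref{thm:application1} with a result of Varesco \cite{varesco}. The shape of Varesco's theorem that I would invoke is the following: for a projective K3 surface whose Kuga-Satake variety is isogenous to a power of an abelian fourfold of Weil type, the algebraicity of the Kuga-Satake correspondence implies the Hodge conjecture for all self-products. The task is thus to verify both hypotheses for the surfaces $S$ in the statement.

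First I would pin down the shape of $\mathrm{KS}(S)$ by transferring it from the associated $\mathrm{Kum}^3$-sixfold. By Lemma \ref{lem:upToIsogeny} there exist a projective $K$ of $\mathrm{Kum}^3$-type and a rational Hodge isometry $H^2_{\mathrm{tr}}(S,\QQ) \xrightarrow{\ \sim \ } H^2_{\mathrm{tr}}(S_K,\QQ)$; composing with the inverse of the Hodge isometry of Lemma \ref{lem:intersectionForm} yields a rational Hodge isometry $H^2_{\mathrm{tr}}(S,\QQ) \xrightarrow{\ \sim \ } H^2_{\mathrm{tr}}(K,\QQ)(2)$. By Remark \ref{rmk:trivialRemarks} the Kuga-Satake varieties $\mathrm{KS}(S)$ and $\mathrm{KS}(K)$ are therefore isogenous. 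On the other hand, the O'Grady-type identification recalled in the paragraph preceding the corollary describes $\mathrm{KS}(K)$ as (isogenous to) $A^4$, with $A$ an abelian fourfold of Weil type such that $H^1(A,\QQ) \cong H^3(K,\QQ)$. Consequently $\mathrm{KS}(S)$ is itself isogenous to a power of such an $A$, which is the first hypothesis of Varesco's theorem.

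The second hypothesis is precisely the content of Theorem \ref{thm:application1}, which provides an algebraic cycle $\gamma$ on $S \times \mathrm{KS}(S) \times \mathrm{KS}(S)$ inducing the Kuga-Satake embedding of rational Hodge structures. Once this is in place Varesco's theorem applies and yields the Hodge conjecture for every power $S^n$, as claimed.

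I do not expect any serious obstacle in carrying out this plan. The genuine new input, namely the algebraicity of the Kuga-Satake correspondence, has already been secured by Theorem \ref{thm:application1}; the identification of $\mathrm{KS}(S)$ with a power of a Weil type fourfold is an immediate transfer from $K$ via Lemma \ref{lem:intersectionForm} and Remark \ref{rmk:trivialRemarks}; and the passage from these two ingredients to the Hodge conjecture for all $S^n$ is exactly Varesco's theorem, used here as a black box.
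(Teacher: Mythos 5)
Your overall strategy is the right one, but the step where you invoke Varesco ``as a black box'' misstates the black box, and the proof as written does not go through. The result of \cite{varesco} used in the paper (Theorem 0.2 there) is not a pointwise statement about a single K3 surface: it is a statement about a $4$-dimensional \emph{family} $\mathcal{S}\to B$ of K3 surfaces of general Picard rank $16$. Its hypotheses are (a) that the Kuga-Satake variety of a \emph{general} member of the family is isogenous to $A^4$ with $A$ an abelian fourfold of Weil type with \emph{trivial discriminant} (a condition you omit, and which is needed to feed Markman's algebraicity of Weil classes into the argument), and (b) that the Kuga-Satake Hodge conjecture holds for \emph{all} members of the family. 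The family structure is essential to Varesco's argument and cannot be stripped away: your surface $S$ may well be a special member (of Picard rank larger than $16$, so that $H^2_{\mathrm{tr}}(S)$ has rank less than $6$), in which case $\mathrm{KS}(S)$ need not be isogenous to a power of a Weil-type fourfold at all, and your transfer of the $A^4$ description from the general $K$ of $\mathrm{Kum}^3$-type to your particular $S$ breaks down. So the sentence ``Consequently $\mathrm{KS}(S)$ is itself isogenous to a power of such an $A$'' is unjustified for special $S$, and even for general $S$ the theorem you want to apply is not the one that exists.

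The paper's proof supplies exactly the missing scaffolding. Using Remark \ref{rmk:4families} it produces a $4$-dimensional family $\mathcal{S}\to B$ whose fibres are the associated K3 surfaces $S_{K_b}$ and which contains a member $\mathcal{S}_0$ with $H^2_{\mathrm{tr}}(S)\cong H^2_{\mathrm{tr}}(\mathcal{S}_0)$ as rational Hodge structures; by \cite{huybrechtsMotives} such an isogeny of K3 surfaces is algebraic, so the Hodge conjecture for all powers of $S$ reduces to the same statement for $\mathcal{S}_0$. It then verifies hypothesis (a) for the general member via O'Grady's theorem and hypothesis (b) for \emph{every} member via Theorem \ref{thm:KSHC}, and only then applies Varesco. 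To repair your argument you need to add these two moves: the reduction to a member of a full $4$-dimensional family, and the verification of the hypotheses family-wise rather than for the single surface $S$.
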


\begin{proof}
	By Remark \ref{rmk:4families}, there exists a $4$-dimensional family $\mathcal{S}\to B$ of projective K3 surfaces of general Picard rank $16$ such that: for each $b\in B$ the fibre $\mathcal{S}_b$ is of the form~$S_{K_b}$ for some $K_b$ of $\mathrm{Kum}^3$-type and for some $0\in B$ there exists a rational Hodge isometry $H^2_{\mathrm{tr}}(S) \xrightarrow{\ \sim \ } H^2_{\mathrm{tr}}(\mathcal{S}_0)$. By \cite{huybrechtsMotives}, it is sufficient to prove the Hodge conjecture for all powers of $\mathcal{S}_0$.
	
	To this end, it will be enough to check that $\mathcal{S}\to B$ satisfies the two assumptions of \cite[Theorem 0.2]{varesco}. As explained in [\textit{loc.\ cit.}, Theorem 4.1], the first of these assumptions is that the Kuga-Satake variety of a general K3 surface in the family is isogenous to $A^4$ for an abelian fourfold $A$ of Weil type with trivial discriminant. This holds by O'Grady's theorem in \cite{O'G21} as already mentioned. 
	The other assumption is that the Kuga-Satake Hodge conjecture holds for the surfaces $\mathcal{S}_b$ for all $b\in B$, which is the content of Theorem \ref{thm:KSHC}.	
\end{proof}

\begin{remark}
	The conclusions of Theorem \ref{thm:KSHC} and Corollary \ref{cor:HCpowers} were known for the two $4$-dimensional families of K3 surfaces studied in \cite{paranjape} and \cite{ILP}. 
	The general fibre has trascendental lattice $U^{\oplus 2}\oplus\langle -2\rangle^{\oplus 2}$ in the first case and $U^{\oplus 2} \oplus \langle -6\rangle \oplus \langle -2\rangle$ in the second. The reader may check that the K3 surfaces studied in \cite{paranjape} satisfy the assumption of our Theorem \ref{thm:KSHC}, while those appearing in \cite{ILP} do not.
\end{remark}

\bibliographystyle{smfplain}
\bibliography{bibliographyNoURL}{}
\end{document}